\DeclareSymbolFontAlphabet{\mathbb}{AMSb}
\DeclareSymbolFontAlphabet{\mathbbl}{bbold}
\newcommand{\prism}{{\mathlarger{\mathbbl{\Delta}}}}
\newcommand{\SW}{{{}^s W}}
\newcommand{\SR}{{{}^s \sR}}
\newcommand{\bu}{{\mathbf{u}}}
\newcommand{\bp}{{\mathbf{p}}}
\newcommand{\bbp}{{\mathbf{p}}}
\newcommand{\BG}{{\mathbb{G}}}
\DeclareMathOperator{\ec}{{ec}}
\DeclareMathOperator{\Pol}{{Pol}}
\DeclareMathOperator{\pNilp}{{p-Nilp}}
\DeclareMathOperator{\perf}{{perf}}
\DeclareMathOperator{\ttop}{{top}}
\DeclareMathOperator{\HHom}{\underline{\on{Hom}}}
\newcommand{\bA}{{\mathbb A}}
\newcommand{\bN}{{\mathbb N}}
\newcommand{\cC}{{\mathcal C}}
\newcommand{\cF}{{\mathcal F}}
\newcommand{\cO}{{\mathcal O}}
\newcommand{\sR}{{\mathscr R}}
\newcommand{\sX}{{\mathscr X}}
\newcommand{\sY}{{\mathscr Y}}
\newcommand{\fL}{{\mathfrak L}}
\newcommand{\nc}{\newcommand}
\nc\wh{\widehat}
\nc\on{\operatorname}
\nc\Gr{\on{Gr}}
\nc\Fl{\on{Fl}}
\newtheorem{cor}[subsubsection]{Corollary}
\newtheorem{lem}[subsubsection]{Lemma}
\newtheorem{prop}[subsubsection]{Proposition}
\newtheorem{conj}[subsubsection]{Conjecture}
\theoremstyle{remark}
\newtheorem{rem}[subsubsection]{Remark}
\newcommand{\BF}{{\mathbb{F}}}
\newcommand{\BN}{{\mathbb{N}}}
\newcommand{\BQ}{{\mathbb{Q}}}
\newcommand{\BW}{{\mathbb{W}}}
\newcommand{\BZ}{{\mathbb{Z}}}
\DeclareMathOperator{\Lie}{{Lie}}
\DeclareMathOperator{\red}{{red}} \DeclareMathOperator{\Spf}{{Spf}}
\DeclareMathOperator{\Cone}{{Cone}}
\DeclareMathOperator{\cone}{{cone}}
\DeclareMathOperator{\BT}{{BT}}
\DeclareMathOperator{\prr}{{pr}}
\DeclareMathOperator{\Syn}{{Syn}}
\newcommand{\limfrom}{{\displaystyle\lim_{\longleftarrow}}}
\newcommand{\limto}{{\displaystyle\lim_{\longrightarrow}}}
\newcommand{\rightlim}{\mathop{\limto}}
\newcommand{\leftlim}{\mathop{\displaystyle\lim_{\longleftarrow}}}
\newcommand{\limfromn}{\leftlim\limits_{\raise3pt\hbox{$n$}}}
\newcommand{\limton}{\rightlim\limits_{\raise3pt\hbox{$n$}}}
\newcommand{\rightlimit}[1]{\mathop{\lim\limits_{\longrightarrow}}\limits%
                    _{\raise3pt\hbox{$\scriptstyle #1$}}}
\newcommand{\leftlimit}[1]{\mathop{\lim\limits_{\longleftarrow}}\limits%
                    _{\raise3pt\hbox{$\scriptstyle #1$}}}
\newcommand{\epi}{\twoheadrightarrow}
\newcommand{\iso}{\buildrel{\sim}\over{\longrightarrow}}
\newcommand{\mono}{\hookrightarrow}
\newcommand{\bcdot}{{\textstyle\cdot}}
\DeclareMathOperator{\Coker}{{Coker}}
\DeclareMathOperator{\Ext}{{Ext}}
\DeclareMathOperator{\good}{{good}} 
\DeclareMathOperator{\Hom}{{Hom}}
\DeclareMathOperator{\Ker}{{Ker}} \DeclareMathOperator{\id}{{id}}
\DeclareMathOperator{\im}{{Im}}
\DeclareMathOperator{\op}{{op}}
\DeclareMathOperator{\RGrpds}{{RGrpds}}
\DeclareMathOperator{\Spec}{{Spec}}
\theoremstyle{definition}
\numberwithin{equation}{section}
\newcommand{\Fr}{\operatorname{Fr}}
\newcommand{\hV}{\tilde V}
\begin{document}
\title[Ring stacks conjecturally related to the stacks $\BT_n^{G,\mu}$]{Ring stacks conjecturally related to the stacks $\BT_n^{G,\mu}$}
\author{Vladimir Drinfeld}
\address{University of Chicago, Department of Mathematics, Chicago, IL 60637}
%\email{drinfeld@math.uchicago.edu}

\begin{abstract}
For $n\in\BN$, we define certain ring stacks $\SR_n$ and $\SR_n^\oplus$ using the ring space of sheared Witt vectors.
We suggest several models for the ring stacks. Motivation: there is a conjectural description of the stack of $n$-truncated Barsotti-Tate groups and its Shimurian analogs in terms of $\SR_n$ and $\SR_n^\oplus$.
\end{abstract}

\keywords{Barsotti-Tate group, p-divisible group, Witt vectors, ring stack, prismatization}

\subjclass[2020]{14L05, 14F30}

%14L05=formal and p-divisible groups
%14F30=p-adic and crystalline cohomology.

\maketitle

\tableofcontents

\section{Introduction}
Throughout this article, we fix a prime $p$.

\subsection{Conventions}   \label{ss:main conventions}
A ring $R$ is said to be $p$-nilpotent if the element $p\in R$ is nilpotent. Let $\pNilp$ denote the category of $p$-nilpotent rings.

We equip $\pNilp^{\op}$ with the fpqc topology. The word ``stack'' will mean a stack on $\pNilp^{\op}$.
The final object in the category of such stacks is denoted by $\Spf\BZ_p$; this is the functor that takes each $p$-nilpotent ring to a one-element set.

Ind-schemes and schemes over $\Spf\BZ_p$ are particular classes of stacks. The words ``scheme over $\Spf\BZ_p$'' are understood in the \emph{relative} sense (e.g., $\Spf\BZ_p$ itself is a scheme over $\Spf\BZ_p$).

$W$ will denote the functor $R\mapsto W(R)$, where $R\in\pNilp$. So $W$ is a ring scheme over $\Spf\BZ_p$. Same for $W_n$.

\subsection{Subject of the paper}
\subsubsection{The subject}
We will define and discuss certain ring stacks $\SR_n$ and $\SR_n^\oplus$, where $n\in\BN$ (more precisely, $\SR_n$ is a $(\BZ/p^n\BZ)$-algebra stack and $\SR_n^\oplus$ is a stack of $\BZ$-graded $(\BZ/p^n\BZ)$-algebras).
Our motivation for introducing these ring stacks is a conjectural relation between them and the stacks $\BT_n^{G,\mu}$ from \cite{GM}.

\subsubsection{The stacks $\BT_n^{G,\mu}$}
Let $G$ be a smooth affine group scheme over $\BZ/p^n\BZ$ and $$\mu :\BG_m\to G$$ a cocharacter satisfying the following \emph{$1$-boundedness} condition: all weights of the action of $\BG_m$ on $\Lie (G)$ are $\le 1$.
Let $\BT_n^{G,\mu}$ be the stack defined in \cite[\S 9]{GM}, so if $R\in\pNilp$ then $\BT_n^{G,\mu}(R)$ is the groupoid of $G$-bundles on $R^{\Syn}\otimes (\BZ/p^n\BZ)$ satisfying a certain condition, which depends on $\mu$.
Here $R^{\Syn}$ is the syntomification of $R$.

By Theorem D from \cite{GM}, $\BT_n^{G,\mu}$ is a smooth algebraic stack over $\Spf\BZ_p$; in other words, for every $m\in\BN$ the restriction of $\BT_n^{G,\mu}$ to the category of $\BZ/p^m\BZ$-algebras is a smooth algebraic stack over $\BZ/p^m\BZ$.
By Theorem A from \cite{GM}, if $G=GL(d)$ then $\BT_n^{G,\mu}$ identifies with the stack of $n$-truncated Barsotti-Tate groups of height $d$ and dimension $d'$, where $d'$ depends on $\mu$.

\subsubsection{Relation between $\BT_n^{G,\mu}$ and the rings stacks $\SR_n, \SR_n^\oplus$}
Conjecture D.8.4 from \cite{On the Lau} expresses $\BT_n^{G,\mu}$ in terms of the ring stacks $\SR_n$ and $\SR_n^\oplus$.
This conjecture and some variants of it were also discussed in \cite{Bonn-2025} in a rather non-technical way.

The article \cite{On the Lau} contains only a sketch of the definition of $\SR_n$ and $\SR_n^\oplus$. In this paper we give the actual definition of these ring stacks and describe several models for them.

\subsection{Sketch of the definition of $\SR_n$ and $\SR_n^\oplus$}
\subsubsection{The ideal $\hat W\subset W$}  \label{sss:2hat W}
For $R\in\pNilp$, let $\hat W(R)$ be the set of all $x\in W(R)$ such that all components of the Witt vector $x$ are nilpotent and all but finitely many of them are zero.
Then $\hat W$ is an ind-subscheme of $W$; moreover, $\hat W$ is an ideal in $W$ preserved by the operators $F,V:W\to W$. For $n\in\BN$ we set $$\hat W^{(F^n)}:=\Ker (F^n:\hat W\to\hat W), \quad W^{(F^n)}:=\Ker (F^n:W\to\hat W).$$

\subsubsection{$\SR_n$ via the ring space $\SW$}
Let $W^{\perf}$ be the projective limit of the diagram 
\[
\ldots \overset{F}\longrightarrow W\overset{F}\longrightarrow W.
\]
Let
\begin{equation} \label{e:4check W directly}
\SW=W^{\perf}/\leftlimit{n} \hat W^{(F^n)}=\leftlimit{n} (W/\hat W^{(F^n)}), 
\end{equation}
where the transition maps in each of the limits equal $F$ and the quotients are understood in the sense of fpqc sheaves on $\pNilp^{\op}$.
Thus $\SW$ is a ring space (by which we mean an fpqc sheaf of rings on $\pNilp^{\op}$); it is called the \emph{ring of sheared Witt vectors}\footnote{The name is due to the relation between $\SW$ and the theory of sheared prismatization from \cite{Akhil1,Akhil2} and~\cite{Sheared}. On the other hand, $\SW$ can be regared as a ``decompletion'' of $W$, see \S\ref{sss:Zink,Lau}(ii) and the end of \S\ref{sss:not V-complete}.}. Note that $W$ is a quotient of $\SW$: indeed, the map $W^{\perf}\to W$ is surjective, and its kernel is 
$\leftlimit{n} W^{(F^n)}\supset\leftlimit{n} \hat W^{(F^n)}$.

Now define the ring stack $\SR_n$ by
\begin{equation}  \label{e:2hat sR_n}
\SR_n:=\Cone (\SW\overset{p^n}\longrightarrow\SW).
\end{equation}

\subsubsection{$\SW^\oplus$ and $\SR_n^\oplus$}  \label{sss:how we get the graded rings}
The homomorphism $F:W\to W$ induces a homomorphism $$F:\SW\to\SW.$$
One also has an important additive homomorphism $\hV :\SW\to\SW$; it is defined 
using the operator $V:W\to W$ \emph{in a nontrivial way} (see \S\ref{ss:2hat V}, in which we follow \cite{Akhil2,BMVZ}).
Applying to $(\SW,F,\hV )$ a certain general algebraic construction (which we call the \emph{Lau equivalence}),
one gets a $\BZ$-graded ring space $\SW^\oplus$, see \S\ref{sss:check W oplus}.
Finally, one sets 
\begin{equation}  \label{e:2hat sR_n oplus}
\SR_n^\oplus:=\Cone (\SW^\oplus\overset{p^n}\longrightarrow\SW^\oplus).
\end{equation}

\subsection{Remarks on $\SW$ and $\hV$}
\subsubsection{}
The definition of $\hV :\SW\to\SW$ is not obvious from \eqref{e:4check W directly} because in mixed characteristic we have $FV\ne VF$. On the other hand, in characteristic $p$ one has $FV= VF$, so the operator
$\hV :\SW_{\BF_p}\to\SW_{\BF_p}$ is clear from \eqref{e:4check W directly} (here $\SW_{\BF_p}:=\SW\times\Spec\BF_p$).

\subsubsection{}
To define $\hV :\SW\to\SW$, it is convenient to replace \eqref{e:4check W directly} by the equivalent formula~\eqref{e:Definition of check W}.

\subsubsection{Some history}  \label{sss:Zink,Lau}
(i) I suggested the definition of $\SW$ while thinking about \cite{Vadik's talk,Sheared} and about the stacks $\BT_n^{G,\mu}$. Simultaneously and independently, E.~Lau introduced $\SW (R)$ in the case where $R$ is a semiperfect $\BF_p$-algebra; in this case he defined $\SW (R)$ to be the right-hand side of formula~\eqref{e:SW(semiperfect)} from our Appendix~\ref{s:SW (R)}.

(ii) In \cite{Lau14} E.~Lau defined a ring $\BW (R)$ for a certain class of $p$-nilpotent rings $R$, which he calls admissible (see \S\ref{sss:admissible rings} of our Appendix~\ref{s:SW (R)}); for a smaller class it had been defined in 2001 by Th.~Zink  \cite{Zink-short}.
If $R$ is admissible then $\BW (R)=\SW (R)$ (see Proposition~\ref{p:admissible rings} of Appendix~\ref{s:SW (R)}). For admissible $R$, the operator $\hV :\BW (R)\to \BW (R)$ was defined in \cite{Zink-short} assuming that $p>2$; this assumption was removed in \cite{Lau14}.

\subsection{Models for $\SR_n$ and $\SR_n^\oplus$}
By a \emph{model} for a ring stack we mean its realization as a $\Cone$ of a quasi-ideal. The models for $\SR_n$ and $\SR_n^\oplus$ provided by \eqref{e:2hat sR_n} and \eqref{e:2hat sR_n oplus} are far from being economic.
However, there are more economic models.

\subsubsection{The situation over $\BF_p$}    \label{sss:situation over F_p}
Let $W_{\BF_p}:=W\times\Spec\BF_p$, $\hat W_{\BF_p}:=\hat W\times\Spec\BF_p$, etc. It turns out that
\begin{equation}  \label{e:reduction of sheared sR_n economically}
\SR_{n,\BF_p}=\Cone (\hat W^{(F^n)}_{\BF_p}\to W_{n,\BF_p}),
\end{equation} 
\begin{equation}  \label{e:reduction of sheared sR_n oplus economically}
\SR_{n,\BF_p}^\oplus=\Cone ((\hat W_{\BF_p}^{(F^n)})^\oplus\to W_{n,\BF_p}^\oplus), 
\end{equation} 
where $(\hat W_{\BF_p}^{(F^n)})^\oplus$ is obtained by applying the Lau equivalence to the triple
  $$(\hat W_{\BF_p}^{(F^n)}, \; F:\hat W_{\BF_p}^{(F^n)}\to \hat W_{\BF_p}^{(F^n)}, \; V:\hat W_{\BF_p}^{(F^n)}\to \hat W_{\BF_p}^{(F^n)})$$
  and $W_{n,\BF_p}^\oplus$ is obtained similarly from the triple
  $$( W_{n,\BF_p}, \; F: W_{n,\BF_p}\to  W_{n,\BF_p}, \; V: W_{n,\BF_p}\to  W_{n,\BF_p}).$$
 The reason why $V:\hat W_{\BF_p}^{(F^n)}\to \hat W_{\BF_p}^{(F^n)}$ and $F: W_{n,\BF_p}\to  W_{n,\BF_p}$ are defined is that in characteristic $p$ one has $FV=VF$.
 
 More details regarding \eqref{e:reduction of sheared sR_n economically}-\eqref{e:reduction of sheared sR_n oplus economically} can be found in \S\ref{sss:I_{n,m}/I'_{n,m} in some cases}(ii) and \S\ref{ss:applying Lau equivalence to C_{n,F_p}}.

\subsubsection{Mixed characteristic}
In mixed characteristic the situation is more complicated.
It turns out that if $p>2$ then similarly to \eqref{e:reduction of sheared sR_n economically}, one has
\begin{equation}  \label{e:sheared sR_n economically}
\SR_n=\Cone (\hat W^{(F^n)}\to W_{n}),
\end{equation} 
and if $p=2$ then $\SR_n$ has a slightly more complicated realization as a quotient of $W_n$.

However, even for $p>2$ the model \eqref{e:sheared sR_n economically} does not exhibit the operator $\hV:\SR_n\to\SR_n$. Related fact:
in mixed characteristic there is no direct analog of \eqref{e:reduction of sheared sR_n oplus economically}.

\subsubsection{Other models}
(i) $\SR_n$ has some models which are more economic than \eqref{e:2hat sR_n} but less economic than \eqref{e:sheared sR_n economically}, see \S\ref{ss:A more economic model}, \S\ref{s:hat sR_n as a quotient of W},  and \S\ref{sss:economic models for hat sR_n}.
In \S\ref{s:tilde A_n} we describe a variant of the model for $\SR_n$ from \S\ref{ss:A more economic model}, which is self-dual up to a cohomological shift, just as the model \eqref{e:sheared sR_n economically}.

(ii) The models for $\SR_n$  from \S\ref{ss:A more economic model} and \S\ref{s:tilde A_n} exhibit both $F$ and $\hV$, so they can easily be used to construct models for $\SR_n^\oplus$, see \S\ref{ss:economic model for hatsR_n oplus} and the end of \S\ref{ss:tilde A_n subject}. 
The economic models for $\SR_n$ from \S\ref{sss:economic models for hat sR_n} do not exhibit $F$ and $\hV$; still, one can use them to construct models for $\SR_n^\oplus$ (see \S\ref{ss:2economic models}).

\subsubsection{The limit $n=\infty$}
One has
\begin{equation}  \label{e:check W as limit}
\limfromn\SR_n=\SW.
\end{equation} 
So one could consider the ring stacks $\SR_n$ as primary objects and then define $\SW$ by \eqref{e:check W as limit}.

\subsection{Digression on (sheared) $n$-prismatization}
\subsubsection{The ring stacks $\sR_n$}
Similarly to \eqref{e:2hat sR_n}, let 
\begin{equation}  \label{e:sR_n}
\sR_n:=\Cone (W\overset{p^n}\longrightarrow W).
\end{equation}
One can also set $\sR_n^\oplus:=\Cone (W^\oplus\overset{p^n}\longrightarrow W^\oplus )$, where $W^\oplus$ is as in \S\ref{sss:Witt frame}(i).

\subsubsection{Recollections on prismatization}   \label{sss:prismatization}
If $X$ is an $\BF_p$-scheme then its prismatization $X^\prism$ is the functor
\begin{equation}  \label{e:crystallization}
\pNilp\to\mbox{Groupoids}, \quad A\mapsto X(\sR_1 (A)),
\end{equation}
where as before, $\sR_1:=\Cone (W\overset{p}\longrightarrow W)$; the expression $X(\sR_1 (A))$ makes sense because  $\sR_1 (A)$ is an animated $\BF_p$-algebra.
In particular, $(\bA^1_{\BF_p})^\prism =\sR_1$.

The definition of $X^\prism$ for any $p$-adic formal scheme $X$ can be found in \cite{Bh, Prismatization}. The idea is to deform the ring stack $\sR_1=\Cone (W\overset{p}\longrightarrow W)$ by replacing $p$ with $\xi$, where $\xi$ is a primitive Witt vector of degree $1$, which matters only up to multiplication by $W^\times$.

\subsubsection{Sheared prismatization}  \label{sss:sheared prismatization}
The functor of sheared prismatization,  denoted by $X\mapsto X^{\hat\prism}$  is defined in \cite{Sheared}; see also \cite{Akhil1, Akhil2}. The idea is to use $\SR_1$ instead of $\sR_1$ and to require $\xi$ to be a ``strictly primitive'' Witt vector of degree $1$.
One has $(\bA^1_{\BF_p})^{\hat\prism} =\SR_1$.

\subsubsection{(Sheared) $n$-prismatization}
For any $n\in\BN$, one could define the functor of $n$-prismati\-zation $X\mapsto X^{\prism_n}$ and its sheared version $X\mapsto X^{\hat{\prism}_n}$ so that for $n=1$ one gets the functors from \S\ref{sss:prismatization}-\ref{sss:sheared prismatization}.
To do this, replace $\sR_1$, $\SR_1$, $\BF_p$ by $\sR_n$, $\SR_n$, $\BZ/p^n\BZ$, and require $\xi$ to be (strictly) primitive of degree $n$.

To formulate Conjecture D.8.4 from \cite{On the Lau}, one only needs $X^{\hat{\prism}_n}$ in the case where $X$ is a scheme over $\BZ/p^n\BZ$. In this case we usually write $X(\SR_n )$ instead of $X^{\hat{\prism}_n}$; this is the functor
\[
\pNilp\to\mbox{Groupoids}, \quad A\mapsto X(\SR_n (A)).
\]

Let us note that if $X$ is a scheme over $\BZ/p^{n-1}\BZ$ then $X^{\prism_n}=X^{\hat{\prism}_n}=\emptyset$ (it suffices to check this if $X=\Spec\BZ/p^{n-1}\BZ$, which is easy). 

\subsection{Relation between this paper and \cite{Sheared, BMVZ}} 
(i) Most of this paper\footnote{Possible exceptions: \S\ref{ss:A simple computation}-\ref{ss:autoduality conjecture}, \S\ref{s:check W oplus}, \S\ref{s:tilde A_n}, \S\ref{s:A class of models}, a part of Appendix~\S\ref{s:SW (R)}.} is either contained in  \cite{Sheared, BMVZ} or is a straightforward modification of the material from \cite{Sheared, BMVZ} obtained by replacing the Witt vector $\xi$ from \S\ref{sss:sheared prismatization} by $p^n$. But unlike \cite{Sheared, BMVZ}, we define and discuss the graded ring space $\SW^\oplus$ and the related stacks $\SR_n^\oplus$.  

(ii) The goal of \cite{BMVZ} is to prepare the ground for a detailed theory of $\SW$ (including cohomology with coefficients in $\SW$).
  In this paper we give a relatively self-contained exposition of \emph{the more elementary part} of this theory.
  
(iii) Let $\pNilp_{{\good}}$ be the full subcategory of rings $R\in\pNilp$ such that the quotient of $R$ by its nilradical is perfect; it is known that $\pNilp_{{\good}}^{\op}$ is a \emph{base} for the fpqc topology on $\pNilp^{\op}$ (see \S\ref{sss:base of fpqc topology}).
Unlike this paper, the authors of \cite{BMVZ} prefer to consider $\SW$ as a sheaf on $\pNilp_{{\good}}^{\op}$ rather than on $\pNilp^{\op}$. Reason: from their point of view, if $R$ is not in $\pNilp_{{\good}}$ then the natural object is
$R\Gamma (\Spec R,\SW)$ rather than $\SW (R)=H^0 (\Spec R,\SW)$. The authors of \cite{BMVZ} have proved that if $R\in\pNilp_{{\good}}$ then $H^i(\Spec R, \SW )=0$ for $i>0$.

\subsection{Organization}
In \S\ref{s:Q}-\ref{s:check W} we recall the material from \cite{BMVZ, Akhil1,Akhil2} about the ring spaces $Q:=W/\hat W$ and $\SW$. We also formulate Conjecture~\ref{conj:autoduality}.

In \S\ref{s:check W oplus} we discuss the \emph{Lau equivalence} and use it to define the ring space $\SW^\oplus$.

In \S\ref{s:2the ring stacks} we define the ring stacks $\SR_n$ and $\SR_n^\oplus$. Following \cite{Sheared, Akhil2}, we construct a model for each of them, which is more economic than \eqref{e:2hat sR_n} and \eqref{e:2hat sR_n oplus}.

In \S\ref{s:tilde A_n} we slightly modify the models from \S\ref{s:2the ring stacks}. The new model for $\SR_n$ is self-dual up to a cohomological shift, just as the model \eqref{e:sheared sR_n economically}.

Following \cite{Sheared,Akhil2}, we describe in \S\ref{s:hat sR_n as a quotient of W}  a model for $\SR_n$, which represents $\SR_n$ as a quotient of $W$. 

In \S\ref{s:economic models for hat sR_n} we represent $\SR_n$ as a quotient of $W_n$; here we follow \cite{Sheared,Vadik's talk}.

In \S\ref{s:A class of models} we discuss a class of models for $\SR_n^\oplus$. 

In Appendix~\ref{s:SW (R)} we describe $\SW (R)$ assuming that the $\BF_p$-alegbra $A:=R/pR$ is \emph{weakly semiperfect}, i.e., $\Fr^n(A)=\Fr^{n+1}(A)$ for some $n\ge 0$.

In Appendix~\ref{s:derived completeness} we recall the notion of derived $p$-completeness.

\subsection{Acknowledgements}
I thank B.~Bhatt, E.~Lau, A.~Mathew, V.~Vologodsky, and M.~Zhang for explaining their works, sharing their drafts, and numerous discussions.
 
The author's work on this project was partially supported by NSF grant DMS-2001425.

\medskip

\section{The ring space $Q$} \label{s:Q}
In this section we recall some material from \cite{BMVZ,Akhil1}. As before, we use the notation and conventions of \S\ref{ss:main conventions}.

\subsection{The ideal $\hat W\subset W$}  \label{ss:hat W}
\subsubsection{Definition of $\hat W$}    \label{sss:hat W}
For $R\in\pNilp$, let $\hat W(R)$ be the set of all $x\in W(R)$ such that all components of the Witt vector $x$ are nilpotent and all but finitely many of them are zero.
Then $\hat W(R)$ is an ideal in $W(R)$ preserved by $F$ and $V$. Moreover, the preimage of $\hat W(R)$ with respect to $V:W(R)\to W(R)$ is \emph{equal} to $\hat W(R)$, so
\begin{equation} \label{e:V is injective}
\Ker (V:W(R)/\hat W(R)\to W(R)/\hat W(R))=0
\end{equation}

Recall that $W$ is an affine scheme over $\Spf\BZ_p$. Clearly, $\hat W$ is an ind-subscheme of $W$ which is ind-finite over $\Spf\BZ_p$.

\subsubsection{Surjectivity of $F$}     \label{sss:Surjectivity of F}
It is known that $F:W\to W$ is surjective as a morphism of fpqc sheaves (e.g., see \cite[\S 3.4]{Prismatization}). The same is true for 
$F:\hat W\to\hat W$; moreover,  $F:\hat W\to\hat W$ is surjective in the fppf sense, see \cite[Prop. 2.30]{BMVZ}. %%

\subsubsection{$\hat W$-torsors and seminormality}      \label{sss:hat W-torsors}
According to \cite{Sw}, a ring $A$ is said to be seminormal if it is reduced\footnote{In fact, reduceness follows from the other condition, see \cite[Tag 0EUK]{Sta}.} and every homomorphism $f:\BZ[x^2,x^3]\to A$ extends to a homomorphism $\tilde f:\BZ[x]\to A$. Any perfect $\BF_p$-algebra is seminormal: in this case $\tilde f$ can be defined
by setting $\tilde f(x)=f(x^p)^{1/p}$.

The quotient of $R\in\pNilp$ by its nilradical is denoted by $R_{\red}$; note that $R_{\red}$ is an $\BF_p$-algebra.
It is proved in \cite[\S 3.1]{BMVZ} that %%

(i) if $R\in\pNilp$ is such that $R_{\red}$ is seminormal then every $\hat W$-torsor on the fpqc site of $\Spec R$ is trivial (in particular, this is true if $R_{\red}$ is perfect);

(ii) for \emph{any} $R\in\pNilp$, every $\hat W$-torsor on the fpqc site of $\Spec R$ is fppf-locally trivial.

\subsubsection{Remark}      \label{sss:base of fpqc topology}
The class of $p$-nilpotent rings $R$ such that $R_{\red}$ is perfect plays a central role in \cite{BMVZ}. Such rings form a \emph{base} of the fpqc topology on $\pNilp^{\op}$; 
in other words, for every $R\in\pNilp$ there exists a faithfully flat $R$-algebra $R'$ such that $R'_{\red}$ is perfect. Moreover, there exists a faithfully flat $R$-algebra $R'$ such that $R'/pR'$ is semiperfect (this is stronger than perfectness of $R_{\red}$).
The proof is reduced to the case where $R$ is the ring of polynomials over $\BZ/p^n\BZ$ in the variables $x_i$, $i\in I$;
in this case take $R'$ to be the inductive limit of the rings $R'_m$, where $R'_m$ is the ring of polynomials over $\BZ/p^n\BZ$ in the variables $x_i^{p^{-m}}$.

\subsubsection{$\hat W (R)$, where $R$ is $p$-complete} \label{sss:hat W of p-complete}
An abelian group is said to be \emph{$p$-complete} if it is complete and separated with respect to the $p$-adic topology.
For a $p$-complete ring $R$, we define\footnote{In practice, we will apply this definition to $p$-complete rings with bounded $p^\infty$-torsion.} $\hat W (R)$ to be the projective limit of $\hat W (R/p^nR)$.
If $R$ is $p$-complete then the projective limit of $W (R/p^nR)$ equals $W(R)$, so $\hat W (R)\subset W(R)$.

\subsection{The ring space $Q$}  \label{ss:Q}
By a ring space we mean an fpqc sheaf of rings on $\pNilp^{\op}$.

\subsubsection{Definition}      \label{sss:definition of Q}
Following \cite{BMVZ}, we set $Q:=W/\hat W$ (quotient in the sense of fpqc sheaves or equivalently, in the sense of fppf sheaves; the equivalence follows from \S\ref{sss:hat W-torsors}(ii)). 

If $R\in\pNilp$ is such that $R_{\red}$ is seminormal (e.g., perfect) then $Q(R)=W (R)/\hat W (R)$ by~\S\ref{sss:hat W-torsors}.

\subsubsection{Pieces of structure on $Q$}   \label{sss:structure on Q}
It is clear that $Q$ is a ring space\footnote{Moreover, as explained in \cite{BMVZ}, $Q$ is a $\delta$-ring space, and there is a certain compatibility between $\delta$ and~$V$ (see \S\ref{ss:delta-ring structure} below).}. Moreover, the maps $F,V:W\to W$ induce maps $F,V:Q\to Q$.
By \eqref{e:V is injective} and \S\ref{sss:Surjectivity of F}, one has
\begin{equation} \label{e:Ker V=0}
\Ker (Q\overset{V}\longrightarrow Q)=0,
\end{equation}
\begin{equation} \label{e:Coker F=0}
\Coker (Q\overset{F}\longrightarrow Q)=0
\end{equation}

\subsection{The operator $\hV :Q\to Q$}  \label{ss:hat V}
\subsubsection{The story in a few words}
It turns out that if $p>2$ then the maps $F,V:Q\to Q$ satisfy the relation $FV=VF=p$. If $p=2$ this is false, but there is a better map\footnote{In a slightly different context, this map was introduced by E.~Lau \cite{Lau14}; then it was rediscovered in~\cite{BMVZ}.} $\hV :Q\to Q$ such that
$F\hV=\hV F=p$. The reader may prefer to disregard the case $p=2$ and assume that $\hV=V$.

\subsubsection{The element $\bar\bu$}   \label{sss:bar bu}
It is easy to see that there is a unique $\bar\bu\in W(\BZ_p)/\hat W(\BZ_p)$ such that 
\begin{equation}  \label{e:definition of bar bu}
V(\bar\bu )=p; 
\end{equation}
moreover, $\bar\bu$ is invertible. According to \cite[Prop.~2.27-2.28]{BMVZ}, %%
\begin{equation}  \label{e:when bar bu=1}
\bar\bu =1 \Leftrightarrow  p>2.
\end{equation}
Applying $F$ to \eqref{e:definition of bar bu}, we get
\begin{equation}   \label{e:property of bar bu}
p\bar\bu =p.
\end{equation}

\subsubsection{The operator $\hV :Q\to Q$}   \label{sss:hV}
Define $\hV :Q\to Q$ by $\hV:=V\circ\bar\bu$; in other words, for $R\in\pNilp$ and $x\in Q(R)$, we have $\hV (x):=V(ux)$.
By \eqref{e:definition of bar bu}-\eqref{e:property of bar bu}, we have
\begin{equation}    \label{e:F times hV}
F\hV=\hV F=p.
\end{equation}
Of course, $\hV$ is additive and satisfies the usual identity
 \begin{equation} \label{e:usual identity}
 \hV (x)y=\hV (xF(y)).
 \end{equation}
By \eqref{e:Ker V=0}, we have
\begin{equation} \label{e:Ker hV=0}
\Ker (Q\overset{\hV}\longrightarrow Q)=0.
\end{equation}

\begin{prop}     \label{p:Q/hat V (Q)}
(i) The map 
\begin{equation}   \label{e:Q/hat V (Q)}
F:Q/\hV (Q)\to Q/\hV (Q)
\end{equation}
is an isomorphism.

(ii) Moreover, $Q/\hV (Q)$ is a sheaf of perfect $\BF_p$-algebras, and \eqref{e:Q/hat V (Q)} is its Frobenius endomorphism.
\end{prop}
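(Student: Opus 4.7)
The plan is to first derive from the algebraic identities that $F$ descends to a well-defined endomorphism of $Q/\hV(Q)$ making the quotient an $\BF_p$-algebra, and then reduce the statement to a direct Witt-vector calculation on the base $\pNilp_{\good}^{\op}$ of the fpqc topology (\S\ref{sss:base of fpqc topology}). Under that reduction one sees that $(Q/\hV(Q))(R)$ is just $R_{\red}$, with $F$ acting as the Frobenius.

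For the algebraic preparation: since $\bar\bu$ is invertible in $Q(\BZ_p)$ by~\S\ref{sss:bar bu}, one has $\hV(Q)=V(\bar\bu\cdot Q)=V(Q)$ as subsheaves of $Q$. The defining identity $V(\bar\bu)=p$ means $p=\hV(1)\in\hV(Q)$, so $Q/\hV(Q)$ is automatically a sheaf of $\BF_p$-algebras. From~\eqref{e:F times hV} one gets $F(\hV(Q))\subset pQ\subset\hV(Q)$, so $F$ descends to an endomorphism of $Q/\hV(Q)$; this endomorphism is surjective because $F\colon Q\to Q$ already is so by~\eqref{e:Coker F=0}.

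For the main part, pick $R\in\pNilp$ with $R_{\red}$ perfect. Triviality of $\hat W$-torsors on $\Spec R$ (\S\ref{sss:hat W-torsors}(i)) and the exact sequence $0\to\hat W\to W\to Q\to 0$ give $Q(R)=W(R)/\hat W(R)$, and a short cohomology-vanishing argument identifies the sheaf cokernel of $\hV\colon Q\to Q$ on $R$ with the naive algebraic one:
\[
(Q/\hV(Q))(R)=Q(R)/V(Q(R))=W(R)/\bigl(\hat W(R)+V(W(R))\bigr).
\]
The first Witt coordinate identifies $W(R)/V(W(R))$ with $R$ and sends $\hat W(R)$ onto the nilradical of $R$ (every nilpotent of $R$ is the $0$th component of some finitely supported nilpotent Witt vector), so $(Q/\hV(Q))(R)=R_{\red}$, a perfect $\BF_p$-algebra by hypothesis --- this proves~(ii). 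Under the same identification, the first Witt coordinate of $F(a_0,a_1,\ldots)$ is $a_0^p+pa_1$, which reduces modulo the nilradical to $a_0^p$, so $F$ acts as the Frobenius of $R_{\red}$. Since $R_{\red}$ is perfect, this Frobenius is bijective, establishing~(i) on the base $\pNilp_{\good}$ and hence on all of $\pNilp^{\op}$.

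The main technical obstacle is the sheafification step identifying the fpqc sheaf cokernel of $\hV$ with the naive algebraic cokernel on good rings (this uses vanishing of $H^1(\Spec R,\hat W)$ and of $H^1$ of the pro-affine-space $W$); once that is in hand, the remainder is a direct computation with Witt vectors.
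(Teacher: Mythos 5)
Your argument shares the paper's key insight --- that $Q/\hV(Q)$ is the fpqc sheafification of the presheaf $R\mapsto R_{\red}$ with $F$ acting as the Frobenius --- but then diverges, and the divergence introduces a gap. The paper concludes directly from the presheaf picture: injectivity of~\eqref{e:Q/hat V (Q)} follows because $\Fr\colon R_{\red}\to R_{\red}$ is a monomorphism of presheaves (reducedness) and sheafification is left exact; surjectivity follows from~\eqref{e:Coker F=0}. No section groups need to be computed and no cohomology vanishing beyond what is already packaged into~\eqref{e:Coker F=0} is required.

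You instead evaluate on good rings and assert $(Q/\hV(Q))(R)=R_{\red}$. This requires identifying the fpqc sheaf cokernel $(Q/\hV Q)(R)$ with the naive quotient $Q(R)/V(Q(R))$, and the two vanishings you invoke --- $H^1(\Spec R,\hat W)=0$ and $H^1(\Spec R,W)=0$ --- do not suffice. Chasing the long exact sequence attached to $0\to Q\overset{V}\to Q\to Q/VQ\to 0$, the obstruction to the desired identification is $\Ker\bigl(V\colon H^1(\Spec R,Q)\to H^1(\Spec R,Q)\bigr)$; and from $0\to\hat W\to W\to Q\to 0$ together with $H^1(\Spec R,W)=0$, the group $H^1(\Spec R,Q)$ injects into $H^2(\Spec R,\hat W)$. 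So one needs $H^2(\Spec R,\hat W)=0$ (or an equivalent input), which you neither prove nor cite. That vanishing is indeed in \cite{BMVZ}, and the paper itself records, right after the proposition, that $(Q/\hV Q)(R)$ is the colimit perfection of $R_{\red}$ for every $R\in\pNilp$ as a theorem of \cite{Akhil1} --- these are nontrivial inputs that your ``short cohomology-vanishing argument'' does not account for. Your preparatory observations ($\hV(Q)=V(Q)$, $p\in\hV(Q)$, $F$ descends and is surjective) and the Witt-coordinate computation are correct; the gap is confined to the passage from the sheaf cokernel to the naive cokernel on good rings, and the paper's route avoids precisely this by never needing to compute $(Q/\hV Q)(R)$ at all.
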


The proof given below is taken from  \cite{Akhil1} and \cite[\S 3.2]{BMVZ}. %%
Moreover, it is proved in \cite{Akhil1} that for any $R\in\pNilp$ the ring $(Q/\hV (Q))(R)$ is the colimit perfection of $R_{\red}$ (i.e., the colimit of the diagram
$R_{\red}\overset{F}\longrightarrow R_{\red}\overset{F}\longrightarrow \ldots$).

\begin{proof}[Proof of Proposition~\ref{p:Q/hat V (Q)}]
$Q:=W/\hat W$, so $Q/\hV (Q)=W/(V(W)+\hat W)$. Therefore $Q/\hV (Q)$ is the fpqc-sheafification of the presheaf
\[
R\mapsto R_{\red}, \quad R\in\pNilp .
\]
Moreover, the map \eqref{e:Q/hat V (Q)} comes from $\Fr :R_{\red}\to R_{\red}$. This proves (ii) and injectivity of~\eqref{e:Q/hat V (Q)}. Surjectivity of \eqref{e:Q/hat V (Q)} follows from \eqref{e:Coker F=0}.
\end{proof}

\begin{cor}     \label{c:acyclicity of bicomplex}
The complex corresponding to the bicomplex
\[
\xymatrix{
Q\ar[r]^\hV \ar[d]_F & Q\ar[d]^F\\
Q\ar[r]^\hV & Q
}
\]
is acyclic.
\end{cor}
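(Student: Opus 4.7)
The plan is to write out the total complex explicitly and then verify exactness at each of its three degrees, using only the three facts already established about $Q$: the injectivity of $\hV$ (see \eqref{e:Ker hV=0}), the surjectivity of $F$ (see \eqref{e:Coker F=0}), and the fact that $F$ induces an isomorphism on $Q/\hV (Q)$ (\propref{p:Q/hat V (Q)}(i)). The commutation relation $F\hV=\hV F$ follows from \eqref{e:F times hV}, so the associated total complex is
\[
0 \longrightarrow Q \xrightarrow{\ d_0\ } Q\oplus Q \xrightarrow{\ d_1\ } Q \longrightarrow 0,
\]
with $d_0(x)=(\hV x, Fx)$ and $d_1(y,z)=Fy-\hV z$. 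All exactness statements are to be checked as fpqc sheaves on $\pNilp^{\op}$, so equalities of sections are allowed to be verified after an fpqc cover.

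For exactness at the leftmost $Q$, if $d_0(x)=0$ then in particular $\hV x=0$, so $x=0$ by \eqref{e:Ker hV=0}. For exactness at the rightmost $Q$, given $w\in Q$, \eqref{e:Coker F=0} yields (fpqc-locally) a $y$ with $Fy=w$, and then $d_1(y,0)=w$. Neither of these steps is the difficult one.

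The heart of the argument is exactness in the middle. Suppose $(y,z)$ satisfies $Fy=\hV z$. The key step is that reduction mod $\hV (Q)$ gives $F\bar y =0$ in $Q/\hV (Q)$. Since $F$ is an isomorphism on $Q/\hV (Q)$ by \propref{p:Q/hat V (Q)}(i), this forces $\bar y=0$, so fpqc-locally $y=\hV x$ for some $x\in Q$. Then \eqref{e:F times hV} gives
\[
\hV (Fx)=F\hV x=Fy=\hV z,
\]
and \eqref{e:Ker hV=0} lets us cancel $\hV$ to conclude $Fx=z$. Hence $d_0(x)=(y,z)$, which completes the proof.

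The only genuine obstacle is middle exactness, and the entire content of that step is the non-trivial assertion from \propref{p:Q/hat V (Q)} that $F$ descends to an isomorphism on $Q/\hV (Q)$; the rest is bookkeeping with the identities $F\hV=\hV F=p$ and with the injectivity of $\hV$.
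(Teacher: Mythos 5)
Your proof is correct and follows essentially the same route as the paper's one-line proof, which simply cites $\Ker(\hV)=0$ and Proposition~\ref{p:Q/hat V (Q)}(i); you have spelled out the elementwise diagram chase. The only cosmetic difference is that for the rightmost exactness you invoke $\Coker(F:Q\to Q)=0$ from \eqref{e:Coker F=0}, whereas the surjectivity of $F$ on $Q/\hV(Q)$ (already contained in Proposition~\ref{p:Q/hat V (Q)}(i)) would suffice — given $w\in Q$, pick $y$ with $Fy\equiv w\pmod{\hV(Q)}$, so $Fy-w=\hV z$ locally and $d_1(y,z)=w$ — which is presumably why the paper does not list \eqref{e:Coker F=0} among its inputs.
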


\begin{proof}
Follows from Proposition~\ref{p:Q/hat V (Q)}(i) and formula \eqref{e:Ker hV=0}. 
\end{proof}

For $i\in\BN$ let $Q^{(F^i)}:=\Ker (Q\overset{F^i}\longrightarrow Q)$.

\begin{cor}     \label{c:2acyclicity of bicomplex}
For every $i\in\bN$ the map $\hV :Q^{(F^i)}\to Q^{(F^i)}$ is an isomorphism.
\end{cor}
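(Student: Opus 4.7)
The plan is to handle injectivity and surjectivity separately. Injectivity follows immediately: since $F\hatsharp V=\hatsharp V F=p$ (so in particular $\hatsharp V$ commutes with $F$ and preserves $Q^{(F^i)}$), the restriction of $\hatsharp V$ to $Q^{(F^i)}$ is injective by \eqref{e:Ker hV=0}. So all the work goes into surjectivity.

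For surjectivity I would argue by induction on $i$, using Corollary~\ref{c:acyclicity of bicomplex} as the engine. The case $i=0$ is vacuous because $Q^{(F^0)}=0$. Assume inductively that $\hatsharp V:Q^{(F^{i-1})}\to Q^{(F^{i-1})}$ is surjective, and let $x\in Q^{(F^i)}$. Then $Fx\in Q^{(F^{i-1})}$, so the induction hypothesis supplies $y'\in Q^{(F^{i-1})}$ with $\hatsharp V y'=Fx$. View the bicomplex as a total complex $Q\to Q\oplus Q\to Q$ with $d^{0}(w)=(\hatsharp V w,Fw)$ and $d^{1}(a,b)=Fa-\hatsharp V b$. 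The pair $(x,y')$ lies in $\ker d^{1}$ by construction, so by vanishing of the first cohomology of the total complex (which is exactly Corollary~\ref{c:acyclicity of bicomplex}) there exists $y\in Q$ with $\hatsharp V y=x$ and $Fy=y'$. The latter gives $F^{i}y=F^{i-1}y'=0$, so $y\in Q^{(F^{i})}$, completing the induction.

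I do not expect a genuine obstacle here; the argument is a two-line diagram chase in the total complex, assuming one has correctly set up the sign convention so that Corollary~\ref{c:acyclicity of bicomplex} translates into the exactness statement $H^{1}=0$ used above. The only mildly delicate point to verify before starting is that $\hatsharp V$ really sends $Q^{(F^{i})}$ to itself, which follows from $F^{i}\hatsharp V=\hatsharp V F^{i}$ (itself a consequence of $F\hatsharp V=\hatsharp V F=p$).
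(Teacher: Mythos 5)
Your argument is correct. The only nontrivial item to check — that $\hV$ preserves $Q^{(F^i)}$, that the induction closes, and that $H^1$ of the total complex really furnishes the lift $y$ — all go through as you say (locally for the fpqc topology, which is what surjectivity of a map of sheaves requires). The approach is in the same spirit as the paper's: both rest on the acyclicity of the total complex from Corollary~\ref{c:acyclicity of bicomplex}. The paper's one-line proof additionally cites $\Coker F=0$ (formula~\eqref{e:Coker F=0}); the natural reading is that it first gets the $i=1$ case via the column filtration of the bicomplex (whose degeneration uses $\Coker F=0$) and then passes to general $i$ by the exact sequences $0\to Q^{(F)}\to Q^{(F^i)}\overset{F}\to Q^{(F^{i-1})}\to 0$, which again need $\Coker F=0$. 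Your version sidesteps both of these by chasing directly in the total complex and instead quoting $\Ker\hV=0$ for injectivity, so it uses a slightly leaner set of inputs at this step; the trade is cosmetic, since $\Coker F=0$ already went into establishing Corollary~\ref{c:acyclicity of bicomplex}. (Minor notational point: you wrote $\hatsharp V$ throughout where the paper's operator is $\hV$.)
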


\begin{proof}
Follows from Corollary~\ref{c:acyclicity of bicomplex} and formula \eqref{e:Coker F=0}. 
\end{proof}

\section{The ring space $\SW$}   \label{s:check W}
In this section we recall some material from \cite{BMVZ,Akhil2} and formulate Conjecture~\ref{conj:autoduality}.

\subsection{Definition of $\SW$}  \label{ss:Definition of check W}
\subsubsection{Definition}   \label{sss:Definition of check W}
Just as in \S\ref{s:Q}, let $Q:=W/\hat W$. Let
\begin{equation} \label{e:Definition of check W}
\SW:=W\times_Q Q^{\perf},
\end{equation}
where $Q^{\perf}$ is the projective limit of the diagram 
\begin{equation}  \label{e:Qperf as a projective system}
\ldots \overset{F}\longrightarrow Q\overset{F}\longrightarrow Q.
\end{equation}
The ring space $\SW$ is called the \emph{ring of sheared Witt vectors}.

If $R\in\pNilp$ is such that $R_{\red}$ is perfect then $Q(R)=W (R)/\hat W (R)$ by \S\ref{sss:definition of Q}, so $\SW (R)$ is rather explicit.
In Appendix~\ref{s:SW (R)} we give an even more explicit description of $\SW (R)$ if $R$ satisfies a certain condition which is stronger than perfectness of  $R_{\red}$.

\subsubsection{Reformulation}
Let $W^{\perf}$ be the projective limit of the diagram 
\[
\ldots \overset{F}\longrightarrow W\overset{F}\longrightarrow W.
\]
The canonical homomorphisms $W^{\perf}\to W$ and $W^{\perf}\to Q^{\perf}$ define a homomorphism
\begin{equation} \label{e:W perf to check W}
W^{\perf}\to\SW.
\end{equation}
Surjectivity of $F:\hat W\to\hat W$ (see \S\ref{sss:Surjectivity of F}) implies that \eqref{e:W perf to check W} is surjective as a map of fpqc sheaves, so
\begin{equation} \label{e:3check W directly}
\SW=W^{\perf}/\leftlimit{n} \hat W^{(F^n)}=\leftlimit{n} (W/\hat W^{(F^n)}), 
\end{equation}
where $\hat W^{(F^n)}:=\Ker (F^n:\hat W\to\hat W)$, the transition maps in each of the limits equal $F$, and the quotients are understood in the fpqc sense.
Note that $W^{\perf}$ is an affine scheme over $\Spf\BZ_p$ and $\leftlimit{n} \hat W^{(F^n)}$ is an ind-subscheme of $W^{\perf}$ which is also an ideal.

\subsubsection{Pieces of structure on $\SW$}   \label{sss:structure on check W}
Recall that a ring space is an fpqc sheaf of rings on $\pNilp^{\op}$.
Both \eqref{e:Definition of check W} and \eqref{e:3check W directly} exhibit $\SW$ as a ring space  equipped with an endomorphism $F$.
There is also an important additive map $\hV:\SW\to\SW$,
see \S\ref{ss:2hat V} below. Moreover, $\SW$ is a $\delta$-ring space, see \S\ref{ss:delta-ring structure} for more details.

\subsubsection{Some exact sequences}  \label{sss:Some exact sequences}
The map $W\to Q$ is clearly surjective. The map $Q^{\perf}\to Q$ is surjective by \eqref{e:Coker F=0}. So we get exact sequences of fpqc sheaves
\begin{equation} \label{e:Akhil's sequence1}
0\to\hat W\to\SW\to Q^{\perf}\to 0,
\end{equation}
\begin{equation} \label{e:Akhil's sequence2}
0\to T_F(Q)\to\SW\to W\to 0.
\end{equation}
Here $T_F(Q)$ is the ``$F$-adic Tate module'' of $Q$, i.e., 
\begin{equation}  \label{e:T_F(Q)}
T_F(Q):=\limfrom (\ldots \overset{F}\longrightarrow Q^{(F^2)}\overset{F}\longrightarrow Q^{(F)}), \quad\mbox{where } Q^{(F^n)}:=\Ker (F^n:Q\to Q).
\end{equation}

We also have a rather tautological exact sequence
\begin{equation}    \label{e:Akhil's sequence3}
0\to \hat W^{\perf}\to W\times_Q W^{\perf}\to\SW\to 0, \quad \hat W^{\perf}:=\limfrom (\ldots \overset{F}\longrightarrow \hat W\overset{F}\longrightarrow \hat W),
\end{equation}
where the map $\hat W^{\perf}\to W\times_Q W^{\perf}$ is given by $x\mapsto (0,x)$ and the map 
$$W\times_Q W^{\perf}\to W\times_Q Q^{\perf}=:\SW$$
comes from the canonical map $W^{\perf}\to Q^{\perf}$. In Appendix~\ref{s:SW (R)} we will use \eqref{e:Akhil's sequence3}
to give an explicit description of $\SW (R)$ for a certain class of rings $R\in\pNilp$.

Note that the epimorphism $W\times_Q W^{\perf}\epi W^{\perf}$ has a canonical splitting
\[
W^{\perf}\mono W\times_Q W^{\perf}, \quad x\mapsto (\pi (x),x),
\]
where $\pi:W^{\perf}\to W$ is the canonical map. So $W\times_Q W^{\perf}$ identifies with the semidirect product\footnote{Given a unital ring $A$ and a (non-unital) $A$-algebra, one defines a unital ring $A\ltimes B$; its additive group is $A\oplus B$, and the product of $(a,b)$ and $(a',b')$ is $(aa',ab'+ba'+bb')$.} $W^{\perf}\ltimes\hat W$, where $W^{\perf}$ acts on $\hat W$ via $\pi$.
After this identification, \eqref{e:Akhil's sequence3} becomes the exact sequence
\[
0\to \hat W^{\perf}\overset{(1,-1)}\longrightarrow W^{\perf}\ltimes\hat W\to\SW\to 0, 
\]
where the restriction of the map $W^{\perf}\ltimes\hat W\to\SW$  to $W^{\perf}$ is the epimorphism \eqref{e:W perf to check W} and the restriction to $\hat W$ is the embedding $\hat W\mono\SW$ from \eqref{e:Akhil's sequence1}.

\subsubsection{The ring $\SW (\BZ_p)$}   \label{sss:SW(Z_p)}
Similarly to \S\ref{sss:hat W of p-complete}, we define $\SW (\BZ_p)$ to be the projective limit of the rings $\SW (\BZ/p^n\BZ)$. The ring homomorphism $\BZ_p\to W(\BZ_p)$ induces ring homomorphisms
$\BZ_p\to Q(\BZ_p)$, $\BZ_p\to Q^{\perf}(\BZ_p)$, and finally, $\BZ_p\to\SW (\BZ_p)$. Combining the latter with the map $\hat W\mono\SW$ from \eqref{e:Akhil's sequence1}, we get a ring homomorphism 
$\BZ_p\oplus\hat W (\BZ_p)\to\SW (\BZ_p)$.
\begin{prop}    \label{p:SW(Z_p)}
This map $\BZ_p\oplus\hat W (\BZ_p)\to\SW (\BZ_p)$ is an isomorphism.
\end{prop}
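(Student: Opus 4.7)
I would analyze the short exact sequence \eqref{e:Akhil's sequence1}, $0\to\hat W\to\SW\to Q^{\perf}\to 0$, on $\BZ_p$-sections. The ring homomorphism $\BZ_p\to\SW(\BZ_p)$ comes from the canonical $\BZ_p$-algebra structure of $W$, which induces compatible structures on $Q$ and $Q^{\perf}$ because the image of $\BZ_p$ in $Q$ is pointwise fixed by $F$ (unique $\delta$-structure on $\BZ_p$). Combined with the embedding $\hat W(\BZ_p)\hookrightarrow\SW(\BZ_p)$ from \eqref{e:Akhil's sequence1}, this is the map of the proposition.

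\emph{Injectivity.} Project $\SW\to W$ and then reduce mod $p$ via $W(\BZ_p)\to W(\BF_p)=\BZ_p$: the composite $\BZ_p\to W(\BZ_p)\to\BZ_p$ is the identity, while $\hat W(\BZ_p)$ maps to $\hat W(\BF_p)=0$ (because $\BF_p$ is reduced). Thus a kernel element $(a,x)$ gives $a+x=0$ in $W(\BZ_p)$, forcing $a=0$ after reducing mod $p$, and hence $x=0$.

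\emph{Surjectivity.} It suffices to prove (a) that \eqref{e:Akhil's sequence1} is exact on $\BZ_p$-sections, and (b) that the canonical map $\BZ_p\to Q^{\perf}(\BZ_p)$ is an isomorphism; together with the canonical section $\BZ_p\to\SW(\BZ_p)$ these identify $\SW(\BZ_p)$ with $\hat W(\BZ_p)\oplus\BZ_p$. For (a), each $R=\BZ/p^n\BZ$ has $R_{\red}=\BF_p$ perfect, so $\hat W$-torsors on $\Spec R$ are trivial by \S\ref{sss:hat W-torsors} and the sequence is short exact on $R$-sections; the transition maps on $\hat W(\BZ/p^n\BZ)$ are surjective (lift Witt coordinates from $p\BZ/p^n\BZ$ to $p\BZ/p^{n+1}\BZ$), so Mittag--Leffler gives exactness on $\BZ_p$-sections. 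For (b), the reduction $Q(\BZ/p^n\BZ)\to Q(\BF_p)=\BZ_p$ is split by the canonical $\BZ_p\to Q(\BZ/p^n\BZ)$, giving a splitting $Q(\BZ/p^n\BZ)=\BZ_p\oplus K_n$ with $K_n=W_1(\BZ/p^n\BZ)/\hat W(\BZ/p^n\BZ)$ (here $W_1$ denotes Witt vectors whose components all lie in $p\BZ/p^n\BZ$). The map $F$ preserves this splitting and acts as the identity on $\BZ_p$; the key point is that $F$ is locally nilpotent on $K_n$. Writing Frobenius on Witt vectors as $F_i(z)=z_i^p+pG_i(z_0,\ldots,z_{i+1})$ with $G_i(0)=0$, one proves by induction on $k$ that $F^k(W_1(\BZ/p^n\BZ))\subset p^{k+1}W(\BZ/p^n\BZ)$, which vanishes for $k\geq n-1$. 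Therefore $\varprojlim_F K_n=0$, yielding $Q^{\perf}(\BZ/p^n\BZ)=\BZ_p$, and passing to the limit in $n$ gives $Q^{\perf}(\BZ_p)=\BZ_p$.

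\emph{Main obstacle.} The crux is the local-nilpotence estimate for $F$ on $W_1(\BZ/p^n\BZ)$: it relies on the polynomial form of the Witt vector Frobenius in mixed characteristic and tracks how the $p$-adic valuation of Witt coordinates grows under iteration, succeeding only because of the $p$-nilpotence of $\BZ/p^n\BZ$.
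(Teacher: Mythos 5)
Your proof is correct, and it takes a genuinely different (more self-contained) route than the paper's. The paper's own proof is a one-liner: apply Proposition~\ref{p:admissible rings} to $R=\BZ/p^n\BZ$ and pass to the limit, and that proposition is proved via the exact sequence~\eqref{e:Akhil's sequence3} (realizing $\SW$ as a quotient of $W\times_Q W^{\perf}$), with $W^{\perf}(\BZ/p^n\BZ)=W(\BF_p)$ and $\hat W^{\perf}(\BZ/p^n\BZ)=0$ supplied by Corollary~\ref{c:killed by a power of F}. You instead work with the extension~\eqref{e:Akhil's sequence1} of $Q^{\perf}$ by $\hat W$ and compute $Q^{\perf}(\BZ/p^n\BZ)=\BZ_p$ directly. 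Both arguments ultimately hinge on the same fact---that $F$ eventually annihilates Witt vectors whose coordinates lie in the nilradical---which the paper packages as Lemma~\ref{l:killed by a power of F} (a dévissage), while you re-derive it from scratch via $F(z)_i\equiv z_i^p\pmod p$, getting the sharp bound $F^{n-1}|_{K_n}=0$. Your approach buys transparency and avoids unwinding the appendix machinery on admissible rings; the paper's buys brevity by specializing a more general structural statement.

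Two small notational cautions. Your $W_1(\BZ/p^n\BZ)$ collides with the paper's $W_n$ (truncated Witt vectors); it would be clearer to write it as $\Ker\bigl(W(\BZ/p^n\BZ)\to W(\BF_p)\bigr)$. And ``$F^k(W_1(\BZ/p^n\BZ))\subset p^{k+1}W(\BZ/p^n\BZ)$'' should be read as ``all Witt coordinates of $F^k(z)$ lie in $p^{k+1}\BZ/p^n\BZ$'', not as containment in the ideal $p^{k+1}W(\BZ/p^n\BZ)$ (the latter is nonzero at $k+1=n$, e.g.\ $pW(\BF_p)=p\BZ_p\neq 0$); your estimate does prove the intended coordinate-wise statement, which is what makes $K_n$ vanish under $F^{n-1}$.
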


\begin{proof}
Apply Proposition~\ref{p:admissible rings} of Appendix~\ref{s:SW (R)} to the ring $\BZ/p^n\BZ$.
\end{proof}

\subsection{The map $\hV:\SW\to\SW$}  \label{ss:2hat V}
\subsubsection{The Witt vector $\bu$}   \label{sss:the Witt vector u}
In \S\ref{sss:bar bu} we defined an invertible element $\bar\bu\in W(\BZ_p)/\hat W(\BZ_p)$.
Once and for all, \emph{we fix an element $\bu\in W(\BZ_p)$ such that $\bu\mapsto\bar\bu$}; then $\bu$ is automatically invertible.
The element $\bu$ is unique up to multiplication by an element of $1+\hat W(\BZ_p)\subset W(\BZ_p)$.

By Proposition~\ref{p:SW(Z_p)},
\begin{equation}  \label{e:doesn't matter}
1+\hat W(\BZ_p)\subset \SW (\BZ_p)^\times ,
\end{equation}
so a particular choice of $\bu$ does not really matter for us: it is straightforward to pass from one choice to another (e.g., see \S\ref{sss:independence of bu}).
For any $p$, one can set $\bu:=V^{-1}(p-[p])$, where $[p]\in W(\BZ_p)$ is the Teichm\"uller element. If $p>2$ then $\bar\bu=1$, so one can set $\bu =1$.
According to \cite[Prop.~2.29]{BMVZ}, %%
in the case $p=2$ one can take $\bu$ to be the Teichm\"uller element $[-1]$.

For any choice of $\bu$, one has
\begin{equation}  \label{e:2reduction of boldface u}
\bu\in\Ker (W (\BZ_p)^\times\to W (\BF_p)^\times)
\end{equation}
because $V\bu-p\in\hat W (\BZ_p)$ by the definition of $\bu$.

\subsubsection{The map $\hV:W\to W$}  \label{sss:2hat V}
Similarly to \S\ref{sss:hV}, define an additive map $\hV:W\to W$ by 
\begin{equation} 
\hV:=V\circ\bu ;
\end{equation}
in other words, $\hV (x):=V(\bu x)$. Then
\begin{equation} \label{e:F hat V=bp}
\hV F= V(\bu )=\hV (1), \quad F\hV=\bp, \mbox{ where } \bp:=p\bu .
\end{equation}
By \eqref{e:definition of bar bu} and \eqref{e:property of bar bu}, we have
\begin{equation} \label{e:where bp lives}
V(\bu )\in p+\hat W(\BZ_p), \quad \bp\in p+\hat W(\BZ_p).
\end{equation}

\subsubsection{The map $\hV:Q^{\perf}\to Q^{\perf}$}  \label{sss:3hat V}
The map $\hV:Q\to Q$ from \S\ref{sss:hV} commutes with $F:Q\to Q$. So $\hV$ acts on the projective system \eqref{e:Qperf as a projective system}. Therefore we get an additive map
$\hV:Q^{\perf}\to Q^{\perf}$.

\subsubsection{The map $\hV:\SW\to \SW$}  \label{sss:4hat V}
Recall that $\SW:=W\times_Q Q^{\perf}$. So combining the maps $\hV:W\to W$ and $\hV:Q^{\perf}\to Q^{\perf}$ from \S\ref{sss:2hat V}-\ref{sss:3hat V}, we get an additive map $\hV:\SW\to \SW$.
The equalities \eqref{e:F hat V=bp} still hold; note that $V(\bu ),\bp\in\SW (\BZ_p)$ by \eqref{e:where bp lives} and \S\ref{sss:SW(Z_p)}, so $V(\bu )$ and $\bp$ make sense as additive endomorphisms of $\SW$.

The maps $\hV,F:\SW\to \SW$ satisfy the usual identity \eqref{e:usual identity}. This is also true for the maps $\hV,F:W\to W$ and $\hV,F:Q^{\perf}\to Q^{\perf}$.

\begin{lem}  \label{l:kernel and cokernel}
For every $n\in\BN$ one has exact sequences
\begin{equation}   \label{e:cokernel of hV^n}
0\to \SW\overset{\hV^n}\longrightarrow \SW\to W_n\to 0,
\end{equation}
\begin{equation}   \label{e:kernel of F^n}
0\to \hat W^{(F^n)}\to\SW\overset{F^n}\longrightarrow  \SW\to 0,
\end{equation}
in which the maps $\SW\to W_n$ and $\hat W^{(F^n)}\to\SW$ come from the canonical maps $\SW\epi W$ and $\hat W\mono\SW$ from \eqref{e:Akhil's sequence1}-\eqref{e:Akhil's sequence2}.
\end{lem}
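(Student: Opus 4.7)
The plan is to deduce both exact sequences from the snake lemma, applied to the fundamental exact sequences of \S\ref{sss:Some exact sequences}: sequence \eqref{e:Akhil's sequence1} for \eqref{e:kernel of F^n} and sequence \eqref{e:Akhil's sequence2} for \eqref{e:cokernel of hV^n}. In each case the strategy is to check that the relevant endomorphism acts on one flanking term with the behavior the lemma predicts for $\SW$, and acts as an isomorphism on the other; the snake lemma then transports these properties to the middle term.

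For \eqref{e:kernel of F^n}, I would apply the endomorphism $F^n$ to $0 \to \hat W \to \SW \to Q^{\perf} \to 0$. The first task is to check that $F : Q^{\perf} \to Q^{\perf}$ is an isomorphism. Writing a point of $Q^{\perf}$ as $(q_0, q_1, \ldots)$ with $F(q_{i+1}) = q_i$, the coordinate-wise Frobenius is the shift $(q_0, q_1, \ldots) \mapsto (F(q_0), q_0, q_1, \ldots)$; its two-sided inverse is the left shift, which is well defined because $F: Q \to Q$ is surjective by \eqref{e:Coker F=0}. On the left term, $F^n : \hat W \to \hat W$ is surjective by \S\ref{sss:Surjectivity of F}, with kernel $\hat W^{(F^n)}$ by definition. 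The snake lemma then forces $\Ker(F^n|_{\SW}) = \hat W^{(F^n)}$ and $\Coker(F^n|_{\SW}) = 0$, proving \eqref{e:kernel of F^n}.

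For \eqref{e:cokernel of hV^n}, I would apply $\hV^n$ to $0 \to T_F(Q) \to \SW \to W \to 0$. On the left term, Corollary~\ref{c:2acyclicity of bicomplex} says $\hV$ is an isomorphism on each $Q^{(F^i)}$; since $F\hV = \hV F = p$ on $Q$, the operator $\hV$ commutes with the transition maps $F$ in $T_F(Q) = \leftlimit{i} Q^{(F^i)}$, so $\hV^n$ is an isomorphism on $T_F(Q)$. On the right term, the Verschiebung $V$ is injective on Witt coordinates, and $\bu$ is a unit, so $\hV = V \circ \bu$ is injective; the identity $V(a) \cdot b = V(a \cdot F(b))$ and the invertibility of $\bu$ and all its Frobenius iterates give $\hV^n(W) = V^n(W)$, and therefore $\Coker(\hV^n|_W) = W_n$. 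The snake lemma then produces \eqref{e:cokernel of hV^n}, and the surjection $\SW \to W_n$ is identified with the one induced by $\SW \to W$, as desired.

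The only step requiring any real calculation, as opposed to snake-lemma bookkeeping, is the identification $\hV^n(W) = V^n(W)$, which is a short inductive computation using the standard Verschiebung identity and unitality of $\bu$; everything else reduces to properties already recorded in \S\ref{s:Q}--\S\ref{s:check W}.
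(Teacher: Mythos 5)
Your proposal is correct and follows essentially the same route as the paper's (very terse) proof: for \eqref{e:kernel of F^n} one uses \eqref{e:Akhil's sequence1} together with $F$ being an automorphism of $Q^{\perf}$, and for \eqref{e:cokernel of hV^n} one uses \eqref{e:Akhil's sequence2} together with Corollary~\ref{c:2acyclicity of bicomplex} giving that $\hV$ is an automorphism of $T_F(Q)$. The only minor inaccuracy is that the left shift on $Q^{\perf}$ is automatically well defined (regardless of surjectivity of $F:Q\to Q$), but this does not affect the argument.
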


\begin{proof}
Exactness of \eqref{e:cokernel of hV^n} follows from \eqref{e:Akhil's sequence2} and the fact  that $\hV :T_F(Q)\to T_F (Q)$ is an isomorphism by Corollary~\ref{c:2acyclicity of bicomplex}.
Exactness of \eqref{e:kernel of F^n} follows from \eqref{e:Akhil's sequence1} and the fact that $F:Q^{\perf}\to Q^{\perf}$ is an isomorphism (by the definition of $Q^{\perf}$); it also follows directly from~\eqref{e:3check W directly}.
\end{proof}

\subsubsection{Remarks}   \label{sss:warning to myself}
(i) By \eqref{e:F hat V=bp}, for every $n\in\BN$ one has
\[
F^n\hV^n=p^n\bu_n , \quad\mbox{ where } \bu_n:=\prod_{i=0}^{n-1} F^i(\bu ).
\]

(ii) As said in \S\ref{sss:the Witt vector u}, one can set $\bu$ to be $1$ if $p>2$ and $[-1]$ if $p=2$. For this choice of $\bu$ one has $F(\bu )=1$ and $\bu_n=\bu$.
So for \emph{any} choice of $\bu$ one has $\bu_n/\bu\in 1+\hat W(\BZ_p)$.

\subsection{The operators $F,\hV$ on $W\times_{Q}W^{\perf}$}   \label{ss:hV on the fiber product}
In this subsection (which can be skipped by the reader) we explain a way to think about $\hV :\SW\to\SW$ (see \S\ref{sss:avoiding Q}).

\subsubsection{}   \label{sss:hV on the fiber product}
The map $F:W^{\perf}\to W^{\perf}$ is clear; it is invertible. We \emph{define} $\hV:W^{\perf}\to W^{\perf}$ by $\hV:=pF^{-1}$.
So the map $W^{\perf}\to W$ is \emph{not} $\hV$-equivariant.

On the other hand, the map $W^{\perf}\to Q^{\perf}$ is $\hV$-equivariant:  indeed, the map $$\hV :Q^{\perf}\to Q^{\perf}$$
equals $pF^{-1}$ by \eqref{e:F times hV}. So the composite map $W^{\perf}\to Q^{\perf}\to Q$ is $\hV$-equivariant.

\subsubsection{}  \label{sss:hV on another fiber product}
Combining the operators $F,\hV$ from \S\ref{sss:hV on the fiber product} with the operators $F,\hV$ on $W$, we get a ring homomorphism
\[
F:W\times_{Q}W^{\perf}\to W\times_{Q}W^{\perf}
\]
and an additive homomorphism
\[
\hV:W\times_{Q}W^{\perf}\to W\times_{Q}W^{\perf};
\]
these operators satisfy the usual relation  \eqref{e:usual identity}. Moreover, $F\hV=\bp'$, where
\[
\bp':=(\bp ,p)\in (W\times_{Q}W^{\perf} )(\BZ_p).
\]

The map $W\times_{Q}W^{\perf}\to W\times_{Q}Q^{\perf}=\SW$ commutes with $F$ and $\hV$.

\subsubsection{Remark}   \label{sss:rewriting}
Using the isomorphism $W^{\perf}\ltimes\hat W\iso  W\times_{Q}W^{\perf}$ defined at the end of \S\ref{sss:Some exact sequences},
 one can rewrite the maps $F,\hV$ from \S\ref{sss:hV on another fiber product} as operators acting on $W^{\perf}\ltimes\hat W$, but
 the formula for the operator $\hV$ on $W^{\perf}\ltimes\hat W$ is a bit ugly.

\subsubsection{A way to avoid $Q$}   \label{sss:avoiding Q}
Possibly some readers would like to have a definition of $\SW$ and the operators $F,\hV$ on $\SW$ which does not use the space $Q:=W/\hat W$.
Formula~\eqref{e:3check W directly} for $\SW$ does not involve $Q$, but it is inconvenient for defining $\hV:\SW\to\SW$. However, one can define $\SW$ 
by the exact sequence
\[
0\to \hat W^{\perf}\to W\times_Q W^{\perf}\to\SW\to 0
\]
(which already appeared in \S\ref{sss:Some exact sequences}, see \eqref{e:Akhil's sequence3}) and think of $W\times_Q W^{\perf}$ as the ind-scheme
\[
\{ (x,y)\in W\times W^{\perf}\,|\, \pi (y)-x\in\hat W\},
\]
where $\pi:W^{\perf}\to W$ is the canonical map. Then the operator $\hV :\SW\to\SW$ comes from the operator $\hV$ on $W\times_Q W^{\perf}$, and the latter is very explicit: it takes $(x,y)\in W\times_Q W^{\perf}$ to
$(\hV (x),pF^{-1}(y))$.

\subsection{$\SW$ as a $\delta$-ring space} \label{ss:delta-ring structure} 
Recall that $\SW:=W\times_Q Q^{\perf}$. Each of the spaces $W,Q, Q^{\perf}$ is a $\delta$-ring space (this is stronger than being a ring space with a ring endomorphism $F$).
So $\SW$ is a $\delta$-ring space. The operators $\delta$ and $\hV$ acting on $\SW$ (or on $W$, $Q$, $Q^{\perf}$)
satisfy the identity
\[
\delta (\hV (x ))=\hV (1)^{p-1}\cdot \hV (\delta (x))+ x\cdot\delta (\hV (1)).
\]
More details can be found in \cite[\S 5]{BMVZ}, %%
where it is assumed that the element $\bu\in W(\BZ_p)$ from \S\ref{sss:the Witt vector u} is chosen in a particular way (namely, $\bu =1$ if $p>2$, $\bu =[-1]$ if $p=2$).

\subsection{Derived $p$-completeness of $\SW$}  \label{ss:derived completeness}
The general notion of derived $p$-completeness is recalled in Appendix~\ref{s:derived completeness}.

\begin{lem}  \label{l:derived completeness of SW}
$\SW$ is derived $p$-complete.
\end{lem}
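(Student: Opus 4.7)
The plan is to establish derived $p$-completeness pointwise: for each $R\in\pNilp$, the abelian group $\SW(R)$ should be derived $p$-complete. I would use the exact sequence \eqref{e:Akhil's sequence2}, $0\to T_F(Q)\to\SW\to W\to 0$, together with the basic closure properties of derived $p$-complete abelian sheaves recalled in Appendix~\ref{s:derived completeness}: the class is stable under kernels, extensions, and inverse limits (the key input for extensions being $\Ext^2_{\BZ}(\BZ[1/p],-)=0$, giving the usual two-out-of-three property in any short exact sequence). It therefore suffices to show that $W$ and $T_F(Q)$ are each derived $p$-complete.

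For $W$, write $W=\leftlim_m W_m$ and use closure under inverse limits. For $R\in\pNilp$ with $p^NR=0$, I claim $W_m(R)$ is bounded $p$-torsion. Indeed, the Verschiebung filtration $0=V^mW_m(R)\subset\cdots\subset VW_m(R)\subset W_m(R)$ has associated graded pieces isomorphic to $R$, on which multiplication by $p$ in $W_m(R)$ acts as $p$ on $R$; since $p^N=0$ in $R$, applying $p^N$ moves one step up in the filtration, so $p^{mN}W_m(R)=0$. Bounded $p$-torsion groups are automatically derived $p$-complete, hence so is $W$.

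For $T_F(Q)=\leftlim_n Q^{(F^n)}$ with transition maps $F$, it suffices by limit-closure to show each $Q^{(F^n)}=\Ker(F^n:Q\to Q)$ is derived $p$-complete; by closure under kernels this reduces to derived $p$-completeness of $Q$. The short exact sequence $0\to\hat W\to W\to Q\to 0$ combined with the derived $p$-completeness of $W$ established above then reduces the problem (by two-out-of-three) to showing $\hat W$ is derived $p$-complete.

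The main obstacle is thus the derived $p$-completeness of $\hat W$. The strategy is to exploit the ind-finite presentation $\hat W=\colim_{m,k}\hat W_{m,k}$, where $\hat W_{m,k}$ is the finite $\Spf\BZ_p$-scheme parametrizing Witt vectors supported in the first $m$ coordinates with entries annihilated by the $k$th power. Each $\hat W_{m,k}(R)$ is bounded $p$-torsion (hence derived $p$-complete), but since filtered colimits do \emph{not} preserve derived $p$-completeness in general (e.g.\ $\bigoplus_n\BZ/p^n\BZ$ is not derived $p$-complete), the argument must use the specific ind-scheme structure of $\hat W$. Concretely, one directly verifies the two criteria $\Hom_{\BZ}(\BZ[1/p],\hat W(R))=0$ and $\Ext^1_{\BZ}(\BZ[1/p],\hat W(R))=0$ by exploiting the fact that every element of $\hat W(R)$ lies in some $\hat W_{m,k}(R)$ (which is $p$-torsion) and that compatible systems parameterizing such $\Ext^1$-classes are forced to stabilize in a single finite piece. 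This final verification is the technical heart of the proof.
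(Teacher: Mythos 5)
Your setup matches the paper's: same exact sequence $0\to T_F(Q)\to\SW\to W\to 0$ from \eqref{e:Akhil's sequence2}, same pointwise reduction via Lemma~\ref{l:repleteness}, and the bounded-torsion argument for $W=\leftlimit{m} W_m$ is a fine (if slightly more elaborate) substitute for the paper's appeal to $p$-completeness of $W(R)$.

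The treatment of $T_F(Q)$, however, contains a genuine gap, and the route you chose cannot be repaired. You reduce each $Q^{(F^n)}=\Ker(F^n\colon Q\to Q)$ to derived $p$-completeness of $Q$ via kernel-closure, then to derived $p$-completeness of $\hat W$ via the sequence $0\to\hat W\to W\to Q\to 0$ and two-out-of-three. But $\hat W$ is \emph{not} derived $p$-complete. Take $R=\BF_p[x_1,x_2,\dots]/(x_1^{p^2},x_2^{p^3},\dots,x_i^{p^{i+1}},\dots)\in\pNilp$; then $[x_i]\in\hat W(R)$ has additive order $p^{i+1}$ and $p^i[x_i]=V^i[x_i^{p^i}]\ne 0$. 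One computes that $\Ext^1_{\BZ}(\BZ[1/p],\hat W(R))=\lim^1(\dots\xrightarrow{p}\hat W(R)\xrightarrow{p}\hat W(R))$ is nonzero: the tuple $([x_n])_n\in\prod_n\hat W(R)$ is not in the image of $(b_n)_n\mapsto(b_n-pb_{n+1})_n$, because any $b_0$ solving the system would have to be congruent to $\sum_{i<N}V^i[x_i^{p^i}]$ modulo $V^N\hat W(R)$ for every $N$, hence have infinitely many nonzero Witt components, contradicting $b_0\in\hat W(R)$. So $\hat W(R)$ fails condition~\S\ref{sss:derived completeness for Z-modules}(ii). Consequently $Q$ is not derived $p$-complete either (two-out-of-three applied to $0\to\hat W\to W\to Q\to 0$), and the ``technical heart'' you postpone cannot be supplied.

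What your plan misses is the key numerical input: by \eqref{e:F times hV} one has $F\hV=\hV F=p$, so if $F^n x=0$ then $p^n x=\hV^n F^n x=0$, i.e.\ $p^nQ^{(F^n)}=0$. Thus each $Q^{(F^n)}(R)$ is bounded $p$-torsion, hence derived $p$-complete by \S\ref{sss:2derived completeness for Z-modules}(i), and $T_F(Q)=\leftlimit{n}Q^{(F^n)}$ is derived $p$-complete by closure under inverse limits, \S\ref{sss:2derived completeness for Z-modules}(ii). This is what the paper does. The moral is that one must exploit the bounded torsion of the kernels $Q^{(F^n)}$, coming from the relation $F\hV=p$, rather than trying to lift derived $p$-completeness from the ambient sheaf $Q$, which does not have it.
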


This is \cite[Prop.~3.38]{BMVZ}. Before giving the proof, let us make the following remark. %%

\begin{rem} \label{r:naive derived p-completeness is good}
$\SW$ is a sheaf on $\pNilp^{\op}$ equipped with the fpqc topology. A product of exact sequences of fpqc sheaves is exact, so by Lemma~\ref{l:repleteness} from Appendix~\ref{s:derived completeness}, derived $p$-completeness of a sheaf $\cF$ on $\pNilp^{\op}$
just means that $\cF (R)$ is derived $p$-complete for each $R\in\pNilp$.
\end{rem}
  
\subsubsection{Proof of Lemma~\ref{l:derived completeness of SW}}
We follow \cite{BMVZ}. The exact sequence \eqref{e:Akhil's sequence2} shows that it suffices to prove derived $p$-completeness of $W$ and $T_F(Q)$. By Remark~\ref{r:naive derived p-completeness is good}, this amounts to proving derived $p$-completeness of $W (R)$ and $(T_F(Q))(R)$ for each $R\in\pNilp^{\op}$.
For $W (R)$, this follows from \S\ref{sss:2derived completeness for Z-modules}(iii). By definition, $T_F(Q)$ is the projective limit of the sheaves $Q^{(F^n)}$. By \eqref{e:F times hV}, $p^nQ^{(F^n)}=0$. So $(T_F(Q))(R)$ is derived $p$-complete by \S\ref{sss:2derived completeness for Z-modules}(i-ii).
\qed 

\subsection{The operator $1-\hV :\SW\to\SW$}   \label{ss:A simple computation}
The remaining part of \S\ref{s:check W}  can be skipped by the reader.

\subsubsection{The goal}   \label{sss:not V-complete}
As before, $W$ and $\SW$ are considered as sheaves on $\pNilp^{\op}$. The map $1-\hV :W\to W$ is invertible: its inverse is $\sum\limits_{n=0}^\infty\hV^n$. So
\begin{equation}   \label{e:1-V on W}
\Ker (W\overset{1-\hV}\longrightarrow W)=\Coker (W\overset{1-\hV}\longrightarrow W)=0.
\end{equation}
On the other hand, in \S\ref{sss:Cone(1-hV)} we will show that
\begin{equation}  \label{e:Ker (1-V) on check W}
\Ker (\SW\overset{1-\hV}\longrightarrow\SW )=\BZ_p (1),
\end{equation}
\begin{equation}     \label{e:Coker (1-V) on check W}
\Coker (\SW\overset{1-\hV}\longrightarrow \SW )=0.
\end{equation}

Let us note that $\Ker (\SW\overset{1-F}\longrightarrow\SW )=\BZ_p$, see \S\ref{sss:Ker (1-F)} below.

Formula~\eqref{e:Ker (1-V) on check W} shows that $\SW$ is \emph{not $\hV$-complete} (unlike $W$). This is \emph{good}: the kernel of $1-\hV:\SW\to\SW$ is quite meaningful.

\begin{lem}   \label{l:Cone(1-hV)}
One has a canonical isomorphism 
\begin{equation}  \label{e:Cone (1-hV)}
\Cone(\hat W\overset{1-\hV}\longrightarrow \hat W)\iso\hat\BG_m,
\end{equation}
where $\hat\BG_m$ is the formal multiplicative group.
\end{lem}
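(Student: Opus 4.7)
The plan has two main steps: show that $1-\hV$ is injective on $\hat W$ (so the cone is concentrated in degree zero), and identify the cokernel with $\hat\BG_m$.

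\textbf{Injectivity.} By Remark~\ref{sss:warning to myself}(i), $\hV^n = V^n\circ\bu_n$ with $\bu_n \in W(\BZ_p)^\times$. Hence $\hV^n(x)$ has its first $n$ Witt components equal to zero for every $x \in \hat W(R)$. If $(1-\hV)x = 0$, iteration yields $x = \hV^n(x)$ for all $n$, forcing every component of $x$ to vanish, so $x = 0$. Therefore $\Cone(1-\hV\colon \hat W \to \hat W)$ is concentrated in degree zero, equal to the cokernel.

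\textbf{Snake-lemma reduction.} Apply the snake lemma to the short exact sequence of fpqc sheaves $0 \to \hat W \to W \to Q \to 0$ equipped with the vertical endomorphism $1-\hV$. By~\eqref{e:1-V on W}, the middle vertical map is an isomorphism, so the snake sequence yields a canonical isomorphism
\[
\Coker(1-\hV\colon\hat W\to\hat W) \iso \Ker(1-\hV\colon Q\to Q).
\]
Explicitly, a class $[x]$ in the cokernel corresponds to the class in $Q$ of $(1-\hV)^{-1}x \in W$. It remains to identify the $\hV$-fixed part of $Q$ with $\hat\BG_m$.

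\textbf{Identification with $\hat\BG_m$.} This is a Cartier--Dieudonn\'e-theoretic identification: $\hat\BG_m$ is characterized as the formal group whose presentation via the sheared Witt frame $(\hat W, \hV)$ recovers $\Ker(1-\hV|_Q)$. To construct the canonical isomorphism, one uses a suitable integral Artin-Hasse-type map from nilpotent Witt vectors to $\hat\BG_m$ and verifies that it induces the desired bijection on the cokernel; in the reverse direction, one lifts an element $1+a \in 1+\mathrm{Nil}(R) = \hat\BG_m(R)$ to an appropriate class in $W(R)/\hat W(R)$ that is $\hV$-fixed. Bijectivity can be checked fpqc-locally on rings $R$ with $R_{\red}$ perfect, which form a base for the fpqc topology by~\S\ref{sss:base of fpqc topology}, and for which $Q(R) = W(R)/\hat W(R)$ admits an explicit description.

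\textbf{Main obstacle.} The delicate point is constructing the correct Artin-Hasse-type map with kernel precisely $(1-\hV)\hat W$. Naive candidates such as $x \mapsto \prod_i(1-x_i)$ fail to have the right kernel (as one sees already in examples over $\BF_p[\epsilon]/\epsilon^3$), so the integral construction requires careful bookkeeping with the Witt-vector structure. The case $p=2$ adds further subtlety via the twist $\bu = [-1]$ inside $\hV$, which must be handled either through the Witt-product structure on $\Lambda = 1+tR[[t]]$ or by fpqc descent to a cover on which $\bu$ trivializes.
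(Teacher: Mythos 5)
The proposal does not actually prove the lemma; it reduces it to an equivalent statement and then stops, so there is a genuine gap.

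Your injectivity argument is correct, and your snake-lemma step is also correct as far as it goes: applying the snake lemma to $0\to\hat W\to W\to Q\to 0$ with the vertical map $1-\hV$ (which is an isomorphism on $W$ by \eqref{e:1-V on W}) does yield a canonical isomorphism $\Coker(1-\hV|_{\hat W})\iso\Ker(1-\hV|_{Q})$. But this is a lateral move, not progress: identifying $\Ker(1-\hV|_Q)$ with $\hat\BG_m$ is exactly as hard as the original problem, and in fact in the paper the exact sequence $0\to\hat\BG_m\to Q\xrightarrow{1-\hV} Q\to 0$ is \emph{deduced from} Lemma~\ref{l:Cone(1-hV)} (via the snake lemma run in the other direction), not the other way around. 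Your final two paragraphs then concede that you do not have the required ``integral Artin--Hasse-type map'' and that the $p=2$ twist is an unresolved complication. So the proof stops precisely where the real work begins.

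The idea you are missing is a simple conjugation. Because $\bu\in\Ker(W(\BZ_p)^\times\to W(\BF_p)^\times)$ (formula \eqref{e:2reduction of boldface u}), the infinite product $\beta:=\prod_{i\ge 0}F^i(\bu)$ converges in $W(\BZ_p)^\times$, and $\beta/F(\beta)=\bu$. Using $a\cdot V(b)=V(F(a)b)$ one checks that $\beta\cdot\hV(x)=V(\beta x)$, so multiplication by $\beta$ intertwines $1-\hV$ and $1-V$ on $\hat W$. This turns the problem for $\hV$ into the classical one for $V$, for which the Artin--Hasse exponential
\begin{equation*}
\lambda\Bigl(\sum_{i\ge 0}V^i[x_i]\Bigr)=\prod_{i\ge 0}\exp\Bigl(x_i+\tfrac{x_i^p}{p}+\tfrac{x_i^{p^2}}{p^2}+\cdots\Bigr)
\end{equation*}
is well known to induce $\Coker(1-V|_{\hat W})\iso\hat\BG_m$, and $\Ker(1-V|_{\hat W})=0$ is immediate. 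In particular, the ``main obstacle'' you describe --- the delicate construction of a new Artin--Hasse map adapted to $\hV$, and the $p=2$ subtlety from $\bu=[-1]$ --- is entirely dissolved by this conjugation: the required map is just $\tilde\lambda(x)=\lambda(\beta x)$, and $p=2$ needs no separate treatment.
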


\begin{proof}
Recall that $\hV:=V\circ\bu$, where $\bu\in\Ker (W (\BZ_p)^\times\to W (\BF_p)^\times)$, see \eqref{e:2reduction of boldface u}. There is a unique $\beta\in\Ker (W (\BZ_p)^\times\to W (\BF_p)^\times)$ such that $\beta/F(\beta)=\bu$: namely,
\begin{equation}   \label{e:beta}
\beta=\prod\limits_{i=0}^\infty F^i(\bu )
\end{equation}
(the infinite product converges). The commutative diagram
\[
\xymatrix{
\hat W\ar[r]^{1-\hV} \ar[d]_\beta & \hat W\ar[d]^\beta\\
\hat W\ar[r]^{1-V} & \hat W
}
\]
defines an isomorphism between $\Cone(\hat W\overset{1-\hV}\longrightarrow \hat W)$ and the complex $\Cone(\hat W\overset{1-V}\longrightarrow \hat W)$. It is clear that $\Ker (\hat W\overset{1-V}\longrightarrow \hat W)=0$. It is well known\footnote{E.g., see \cite[formula (211)]{Zink}.} that
the map
\begin{equation}  \label{e:lambda}
\lambda: \hat W\to\hat\BG_m, \quad \lambda(\sum_{i=0}^\infty V^i[x_i]):=\prod_{i=0}^\infty \exp (x_i+\frac{x_i^p}{p}+\frac{x_i^{p^2}}{p^2}+\ldots )
\end{equation}
induces an isomorphism $\Coker (\hat W\overset{1-V}\longrightarrow \hat W)\iso \hat\BG_m$. 
\end{proof}

Let us note that the isomorphism \eqref{e:Cone (1-hV)} constructed in the proof of Lemma~\ref{l:Cone(1-hV)} comes from the map
\begin{equation}  \label{e:tilde lambda}
\tilde\lambda :\hat W\to\hat\BG_m, \quad  \tilde\lambda (x):=\lambda (\beta x),
\end{equation}
where $\lambda$ is given by  \eqref{e:lambda} and $\beta\in W(\BZ_p)^\times$ is given by \eqref{e:beta}.

\begin{lem}
One has canonical exact sequences
\begin{equation}  \label{e:1}
0\to\hat\BG_m\to Q\overset{1-\hV}\longrightarrow Q\to 0,
\end{equation}
\begin{equation}  \label{e:2}
0\to\mu_{p^n}\to Q^{(F^n)}\overset{1-\hV}\longrightarrow Q^{(F^n)}\to 0,
\end{equation}
\begin{equation}  \label{e:3}
0\to\BZ_p(1)\to T_F(Q)\overset{1-\hV}\longrightarrow T_F(Q)\to 0.
\end{equation}
\end{lem}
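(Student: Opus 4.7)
The plan is to deduce the three exact sequences by successive applications of the snake lemma, starting from Lemma~\ref{l:Cone(1-hV)} and formula \eqref{e:1-V on W}.

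For \eqref{e:1}, apply the snake lemma to the short exact sequence $0\to \hat W\to W\to Q\to 0$ with vertical maps $1-\hV$. By \eqref{e:1-V on W} the middle column is invertible, while the proof of Lemma~\ref{l:Cone(1-hV)} shows $\Ker(1-\hV|_{\hat W})=0$ and $\Coker(1-\hV|_{\hat W})\cong\hat\BG_m$ via $\tilde\lambda$. The resulting six-term sequence then collapses to the isomorphism $\Ker(1-\hV|_Q)\cong\hat\BG_m$ and the vanishing $\Coker(1-\hV|_Q)=0$, proving \eqref{e:1}.

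For \eqref{e:2}, apply the snake lemma to the short exact sequence $0\to Q^{(F^n)}\to Q\xrightarrow{F^n}Q\to 0$ (exact by \eqref{e:Coker F=0} and the definition of $Q^{(F^n)}$), again with vertical maps $1-\hV$. The diagram commutes because $F\hV=\hV F=p$ on $Q$ by \eqref{e:F times hV}. Plugging in \eqref{e:1}, the snake sequence becomes
\[
0\to\Ker(1-\hV|_{Q^{(F^n)}})\to\hat\BG_m\xrightarrow{F^n}\hat\BG_m\to\Coker(1-\hV|_{Q^{(F^n)}})\to 0.
\]
It remains to identify $F^n$ on the embedded copy $\hat\BG_m\subset Q$ with the $p^n$-th power map. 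For any $q\in\Ker(1-\hV|_Q)$, the identity $\hV q=q$ together with \eqref{e:F times hV} gives $Fq=F(\hV q)=(F\hV)q=pq$ in $Q$. Since the inclusion $\hat\BG_m\hookrightarrow Q$ produced by \eqref{e:1} is a homomorphism from the multiplicative to the additive group (it is the snake connecting map composed with the Artin--Hasse-type isomorphism $\tilde\lambda$, both of which are additive in the appropriate sense), multiplication by $p$ on the right corresponds to $[p]$ on the left. Hence $F^n$ restricts to $[p^n]:\hat\BG_m\to\hat\BG_m$, which is fppf-surjective with kernel $\mu_{p^n}$, yielding \eqref{e:2}.

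For \eqref{e:3}, take the projective limit of \eqref{e:2} along the transition maps $F:Q^{(F^{n+1})}\to Q^{(F^n)}$, which under the above identification correspond to $[p]:\mu_{p^{n+1}}\to\mu_{p^n}$ on the kernels. Both transition systems have (fppf-)surjective transition maps: for the middle term, given $q\in Q^{(F^n)}$, surjectivity of $F:Q\to Q$ (see \eqref{e:Coker F=0}) furnishes a lift $q'\in Q$, and then $F^{n+1}q'=F^n q=0$ automatically places $q'$ in $Q^{(F^{n+1})}$. Hence $\limfrom^1$ vanishes on both systems by a Mittag-Leffler argument, so passage to the limit preserves exactness. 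Since $\limfrom\mu_{p^n}=\BZ_p(1)$ and $\limfrom Q^{(F^n)}=T_F(Q)$ by the definition \eqref{e:T_F(Q)}, we obtain \eqref{e:3}.

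The main obstacle is the identification $F^n|_{\hat\BG_m}=[p^n]$. The group-theoretic argument above is clean but relies on knowing that the iso $\hat\BG_m\cong\Ker(1-\hV|_Q)$ coming from the snake lemma is an isomorphism of abelian groups (with the multiplicative structure on the left and the additive structure inherited from $Q$ on the right); an alternative verification would trace the connecting morphism explicitly through \eqref{e:tilde lambda} and use the classical relation $\lambda(Fx)=\lambda(x)^p$ for the Artin--Hasse-type exponential.
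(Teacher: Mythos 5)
Your proposal is correct and takes essentially the same route as the paper: the snake lemma on $0\to\hat W\to W\to Q\to 0$ with verticals $1-\hV$ gives \eqref{e:1}, and the observation that $F$ restricted to $\Ker(1-\hV|_Q)\cong\hat\BG_m$ equals multiplication by $p$ (hence $[p]$ under the multiplicative identification) gives \eqref{e:2} and, after passing to the limit, \eqref{e:3}. You are somewhat more explicit than the paper (which compresses the argument for \eqref{e:2}--\eqref{e:3} into two sentences and leaves the snake lemma and Mittag--Leffler details to the reader), but there is no difference of approach.
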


\begin{proof}
$Q=W/\hat W$, so \eqref{e:1} follows from Lemma~\ref{l:Cone(1-hV)} and formula~\eqref{e:1-V on W}.
The morphism $F:Q\to Q$ is surjective, and its restriction to $\Ker (Q\overset{1-\hV}\longrightarrow Q)$ equals $F\hV=p$. So \eqref{e:2} and \eqref{e:3} follow from \eqref{e:1}.
\end{proof}

\subsubsection{Proof of \eqref{e:Ker (1-V) on check W}-\eqref{e:Coker (1-V) on check W}}   \label{sss:Cone(1-hV)}
$\SW$ is an extension of $W$ by $T_F(Q)$, so \eqref{e:Ker (1-V) on check W} and \eqref{e:Coker (1-V) on check W} follow from \eqref{e:1-V on W} and the exact sequence \eqref{e:3}. \qed

\subsubsection{Remark}   \label{sss:Ker (1-F)}
The map $1-F:T_F(Q)\to T_F(Q)$ is invertible because $F:Q^{(F^n)}\to Q^{(F^n)}$ is nilpotent. So
\begin{equation}    \label{e:Cone (1-F) on check W}
\Cone(\SW\overset{1-F}\longrightarrow \SW )=\Cone(W\overset{1-F}\longrightarrow W)=\BZ_p[1].
\end{equation}

\subsection{An autoduality conjecture}  \label{ss:autoduality conjecture}
\subsubsection{}
Let $\alpha\in\Ext (\SW,\BZ_p(1))$ be the class of the extension
\begin{equation}  \label{e:the extension of SW}
0\to\BZ_p(1)\to\SW\overset{1-\hV}\longrightarrow\SW\to 0
\end{equation}
provided by formulas \eqref{e:Ker (1-V) on check W}-\eqref{e:Coker (1-V) on check W}. Clearly
\begin{equation}  \label{e:alpha fixed by hV}
(1-\hV^*)(\alpha)=0,
\end{equation}
where $\hV^*:\Ext (\SW,\BZ_p(1))\to\Ext (\SW,\BZ_p(1))$ is induced by $\hV :\SW\to\SW$.

\subsubsection{}
Let $R\in\pNilp$. Let $\SW_R:=\SW\times\Spec R$ (i.e., $\SW_R$ is the base change of $\SW$ to $R$).
Let $\alpha_R\in\Ext (\SW_R,\BZ_p(1)_R)$ be the image of $\alpha\in\Ext (\SW,\BZ_p(1))$. Let
\begin{equation}  \label{e:the map f_R}
f_R: \SW (R)\to\Ext (\SW_R,\BZ_p(1)_R)
\end{equation}
be the unique $W(R)$-linear map such that $f_R(1)=\alpha_R$.

\begin{conj}   \label{conj:autoduality}
The map \eqref{e:the map f_R} is an isomorphsim.
\end{conj}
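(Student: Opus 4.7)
My plan is to apply $R\Hom(\SW_R,-)$ to the two-term resolution of $\BZ_p(1)$ provided by \eqref{e:the extension of SW}, namely the complex $[\SW\overset{1-\hV}\longrightarrow\SW]$. The resulting long exact sequence reads
\[
\Hom(\SW_R,\SW_R)\overset{(1-\hV)_*}{\longrightarrow} \Hom(\SW_R,\SW_R)\overset{\partial}{\longrightarrow} \Ext^1(\SW_R,\BZ_p(1)_R)\longrightarrow \Ext^1(\SW_R,\SW_R)\overset{(1-\hV)_*}{\longrightarrow} \Ext^1(\SW_R,\SW_R),
\]
and the boundary $\partial$ sends a morphism $\phi\colon\SW_R\to\SW_R$ to the pullback extension $\phi^*\alpha_R$; in particular $\partial(\id_{\SW_R})=\alpha_R$. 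Since $f_R$ is the unique $W(R)$-linear map with $f_R(1)=\alpha_R$, it factors as $f_R=\partial\circ m$, where $m\colon\SW(R)\to\Hom(\SW_R,\SW_R)$ is multiplication. Hence the conjecture splits into two assertions: (a) the induced map $\bar m\colon\SW(R)\to\Coker\bigl((1-\hV)_*\colon\Hom(\SW_R,\SW_R)\to\Hom(\SW_R,\SW_R)\bigr)$ is an isomorphism; and (b) the map $(1-\hV)_*$ on $\Ext^1(\SW_R,\SW_R)$ is injective.

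For (a), I would first reduce to $R\in\pNilp_{\good}$ by fpqc descent (\S\ref{sss:base of fpqc topology}), since both sides of $f_R$ are fpqc sheaves in~$R$ and their formation commutes with base change. For such $R$, $\SW(R)$ admits the explicit description of Appendix~\ref{s:SW (R)}, and one can analyze $\Hom(\SW_R,\SW_R)$ via the devissages \eqref{e:Akhil's sequence1} and \eqref{e:Akhil's sequence2}, namely $0\to\hat W\to\SW\to Q^{\perf}\to 0$ and $0\to T_F(Q)\to\SW\to W\to 0$, reducing the Hom computation to the more classical sheaves $W$, $\hat W$, $Q^{\perf}$, and $T_F(Q)$. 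For (b), derived $p$-completeness (Lemma~\ref{l:derived completeness of SW}) permits reduction modulo $p^n$ via $\limfromn\SR_n=\SW$ from \eqref{e:check W as limit}, converting the question to one about the ring stacks $\SR_n$, for which the economic models of \S\ref{s:hat sR_n as a quotient of W} and \S\ref{s:economic models for hat sR_n} should make $\Ext^1(\SR_n,\SR_n)$ accessible.

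\textbf{Main obstacle.} The principal difficulty lies in (a), specifically in showing that every class in $\Hom(\SW_R,\SW_R)$ modulo the image of $(1-\hV)_*$ is represented by multiplication by an element of $\SW(R)$. The group of additive endomorphisms of $\SW_R$ is a priori much larger than $\SW(R)$, and only after quotienting by the $(1-\hV)_*$-image is multiplication expected to exhaust the endomorphism group. In mixed characteristic, where $FV\ne VF$ and the definition of $\hV$ involves the unit $\bu$ (with $\bu=[-1]$ when $p=2$), this reduction requires delicate bookkeeping. Establishing (b) similarly demands a careful understanding of $\Ext^1(\SW_R,\SW_R)$ and the $\hV_*$-action on it, and I expect these endomorphism/extension computations to constitute the bulk of the technical work.
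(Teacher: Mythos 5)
This statement is a \emph{conjecture} in the paper (Conjecture~\ref{conj:autoduality}), not a theorem: the author offers only motivation (the model~\eqref{e:self-dual description of Cone} and Cartier duality between $\hat W^{(F^n)}$ and $W_n$) plus two short remarks, and no proof. So there is no ``paper's own proof'' to compare your attempt against.

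As for the merits of your strategy: the initial step is sound. Applying $R\Hom(\SW_R,-)$ to the short exact sequence~\eqref{e:the extension of SW}, using $\Hom(\SW_R,\BZ_p(1)_R)=0$ (which follows from Proposition~\ref{p:HHom=0}), and identifying the boundary map $\partial$ with pullback of the extension class $\alpha_R$ are all correct, and the factorization $f_R=\partial\circ m$ does split the conjecture into your (a) and (b). This is a reasonable first reduction.

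Two genuine gaps remain. First, your proposed reduction of (a) to $R\in\pNilp_{\good}$ ``by fpqc descent'' does not work as stated: while $\SW(R)$ is a section of an fpqc sheaf, $\Hom(\SW_R,\SW_R)$ and $\Ext^1(\SW_R,\SW_R)$ are global objects on the site $(\pNilp/R)^{\op}$, and their formation does not a priori commute with faithfully flat base change in $R$, nor do they assemble into fpqc sheaves in the variable $R$. One would need a separate base-change argument (or a different mechanism, e.g.\ derived $p$-completeness and reduction to the $\SR_n$, along the lines you sketch for (b)) before localizing to $\pNilp_{\good}$. Second — and you acknowledge this — the actual content of the conjecture lies in the endomorphism and extension computations for $\SW_R$: showing that multiplication exhausts $\End(\SW_R)$ modulo the image of $(1-\hV)_*$, and that $(1-\hV)_*$ is injective on $\Ext^1(\SW_R,\SW_R)$. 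The d\'evissages you mention (via~\eqref{e:Akhil's sequence1},~\eqref{e:Akhil's sequence2}, and the economic models of \S\ref{s:hat sR_n as a quotient of W}--\S\ref{s:economic models for hat sR_n}) are the natural tools, but none of those computations is carried out, and it is precisely there that the difficulty — and the possibility of the conjecture failing — resides. In short, what you have is a plausible opening move, not a proof, and the paper leaves the statement open as well.
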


\subsubsection{Motivation of the conjecture}
In \S\ref{s:economic models for hat sR_n} we will show that if $p>2$ then
\begin{equation}  \label{e:self-dual description of Cone}
\Cone (\SW\overset{p^n}\longrightarrow\SW)\simeq\Cone (\hat W^{(F^n)}\longrightarrow  W_n),
\end{equation}
see \S\ref{sss:I_{n,m}/I'_{n,m} in some cases}(i).
Conjecture~\ref{conj:autoduality} is motivated by this formula and by Cartier duality between $\hat W^{(F^n)}$ and $W_n$. 

If $p$ is \emph{any} prime then $\Cone (\SW\overset{p^n}\longrightarrow\SW)$ has a description (see \S\ref{s:tilde A_n}), which is more complicated then \eqref{e:self-dual description of Cone}
but still self-dual in some sense. 

\subsubsection{Remarks}
(i) By  Proposition~\ref{p:HHom=0} below, $\Hom (\SW_R,\BZ_p(1)_R)=0$.

(ii) The following lemma shows that the map \eqref{e:the map f_R} interchanges $F$ and $\hV$.

\begin{lem}
$f_R\circ F=\hV^*\circ f_R$, and $f_R\circ\hV=F^*\circ f_R$.
\end{lem}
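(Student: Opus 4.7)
The plan is to reduce both identities to statements about the class $\alpha_R$ itself, using the linearity of $f_R$, and then verify those statements via the $F$-semilinearity of $\hV$ together with the $\hV^*$-invariance of $\alpha_R$ recorded in \eqref{e:alpha fixed by hV}.

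First I would observe that the assignment $y\mapsto y\cdot\alpha_R$, where $\SW(R)$ acts on $\Ext(\SW_R,\BZ_p(1)_R)$ by pullback along the multiplication endomorphism $(y\cdot):\SW_R\to\SW_R$, is an $\SW(R)$-linear (hence $W(R)$-linear) map sending $1$ to $\alpha_R$. Granting uniqueness of $f_R$ in a suitably strong form (e.g.\ via $\Hom(\SW_R,\BZ_p(1)_R)=0$ from Proposition~\ref{p:HHom=0}), this forces
\[
f_R(y)=y\cdot\alpha_R \qquad\text{for all } y\in\SW(R).
\]

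The central algebraic input is the usual identity $\hV(x)y=\hV(xF(y))$ from \eqref{e:usual identity}. Reading it twice gives equalities of additive endomorphisms of $\SW_R$:
\[
(y\cdot)\circ\hV=\hV\circ(F(y)\cdot),\qquad (\hV(y)\cdot)=\hV\circ(y\cdot)\circ F.
\]
Taking pullbacks (contravariantly) translates these into identities of operators on $\Ext(\SW_R,\BZ_p(1)_R)$:
\[
\hV^*\circ(y\cdot)^*=(F(y)\cdot)^*\circ\hV^*,\qquad (\hV(y)\cdot)^*=F^*\circ(y\cdot)^*\circ\hV^*.
\]
Applying both to $\alpha_R$ and invoking $\hV^*\alpha_R=\alpha_R$ from \eqref{e:alpha fixed by hV} yields at once
\[
\hV^*f_R(y)=(F(y)\cdot)^*\alpha_R=f_R(F(y)),\qquad f_R(\hV(y))=F^*(y\cdot)^*\alpha_R=F^*f_R(y),
\]
which are the two desired formulas.

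The main obstacle is conceptual rather than computational: it lies in the very first step, namely justifying that the $W(R)$-linearity of $f_R$ is strong enough to force $f_R(y)=y\cdot\alpha_R$ on all of $\SW(R)$, rather than only on $W(R)$. Once this extension step is granted, the remainder is a mechanical manipulation of pullbacks that uses nothing more than \eqref{e:usual identity} and \eqref{e:alpha fixed by hV}.
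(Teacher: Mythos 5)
Your argument is correct and coincides with the paper's proof: both proceed by reading $f_R(y)=y^*\alpha_R$, translating \eqref{e:usual identity} into the two operator identities $(y\cdot)\circ\hV=\hV\circ(F(y)\cdot)$ and $(\hV(y)\cdot)=\hV\circ(y\cdot)\circ F$, pulling back, and using $\hV^*\alpha_R=\alpha_R$ from \eqref{e:alpha fixed by hV}. The ``obstacle'' you flag is not actually one: the paper's proof likewise takes $f_R(a)=a^*\alpha_R$ for all $a\in\SW(R)$ as the operative description of $f_R$, so no separate extension step from $W(R)$ to $\SW(R)$ is being deferred.
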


The proof is based on \eqref{e:alpha fixed by hV}.

\begin{proof}
(i) If $a\in\SW(R)$ then $f_R(F(a))=F(a)^*\alpha_R=F(a)^*\hV^*\alpha_R=\hV^*(a^*\alpha_R)=\hV^*(f_R(a))$.

(ii) $f_R(\hV (a))=\hV(a)^*\alpha_R=(\hV a F)^*\alpha_R=F^*a^*\hV^*\alpha_R=F^*(a^*\alpha_R)=F^*(f_R(a))$.
\end{proof}

\begin{prop}    \label{p:HHom=0}
$\HHom (\SW ,\BZ_p(1) )=0$, where $\HHom$ stands for the sheaf of $\Hom$'s on~$\pNilp^{\op}$.
\end{prop}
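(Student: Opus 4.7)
The plan is to reduce to the finite-level quotients of $\BZ_p(1)$ and then apply a Frobenius-equivariance argument. Since $\BZ_p(1) = \limfromn \mu_{p^n}$ and $\HHom$ commutes with inverse limits in the second argument, it suffices to prove $\HHom(\SW,\mu_{p^n}) = 0$ for every $n\ge 1$. Fix such an $n$ and a morphism of fpqc sheaves of abelian groups $\phi\colon\SW\to\mu_{p^n}$.

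The main step is to show that $\phi_R=0$ whenever $R$ is an $\BF_p$-algebra. The key observation is that when $p=0$ in $R$ the built-in Witt-vector Frobenius $F\colon W(R)\to W(R)$ coincides with the endomorphism $W(\mathrm{Fr}_R)$ obtained from functoriality of $W$ applied to the absolute Frobenius $\mathrm{Fr}_R\colon R\to R$; in characteristic $p$ both send $(a_0,a_1,\ldots)$ to $(a_0^p,a_1^p,\ldots)$. This identification passes to $\hat W$, to $Q=W/\hat W$ and $Q^{\perf}$, and hence to $\SW=W\times_Q Q^{\perf}$. On the target side, $\mu_{p^n}(\mathrm{Fr}_R)$ is the $p$-th power map, which in the group $\mu_{p^n}$ is multiplication by $p$. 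Naturality of $\phi$ in $R$ therefore forces
\[
\phi_R\circ F \;=\; p\cdot\phi_R.
\]
Iterating $n$ times and using that $\mu_{p^n}$ is $p^n$-torsion gives $\phi_R\circ F^n=0$. By \eqref{e:kernel of F^n}, $F^n\colon\SW\to\SW$ is an epimorphism of fpqc sheaves, so precomposition with $F^n$ is injective on $\Hom$, and we conclude $\phi_R=0$ for every $\BF_p$-algebra $R$.

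To pass from $\BF_p$-algebras to arbitrary $R\in\pNilp$, one uses the $p$-adic filtration $R\supset pR\supset\cdots\supset p^NR=0$ and inducts on the nilpotence index $N$. In the inductive step, naturality of $\phi$ forces $\phi_R$ to land in the kernel of $\mu_{p^n}(R)\to\mu_{p^n}(R/p^kR)$, which is a $p$-torsion abelian group, so $\phi_R$ is detected by its reduction modulo $p$ and vanishes by the previous step. More conceptually: both $\SW$ (Lemma~\ref{l:derived completeness of SW}) and $\BZ_p(1)$ are derived $p$-complete, so vanishing on $\BF_p$-algebras propagates to $\pNilp$ by a derived Nakayama argument.

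The conceptual content lies in the Frobenius-equivariance step, where the identification $F=W(\mathrm{Fr}_R)$ together with surjectivity of $F^n$ and $p^n$-torsion of the target makes any $\phi$ kill the image of $F^n$, and hence vanish. The main technical obstacle is the bootstrap from $\BF_p$-algebras to mixed characteristic; this is essentially formal given the derived $p$-completeness of source and target, but deserves some care in writing out.
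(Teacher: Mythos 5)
Your Frobenius-equivariance observation for $\BF_p$-algebras is correct and elegant: for an $\BF_p$-algebra $R$ one indeed has $\SW(\Fr_R) = F$ (since this holds on $W$, $\hat W$, $Q$ componentwise and hence on $Q^{\perf}$ and on the fiber product $\SW = W\times_Q Q^{\perf}$), $\mu_{p^n}(\Fr_R)$ is multiplication by $p$, and $F^n\colon\SW\to\SW$ is a sheaf epimorphism by \eqref{e:kernel of F^n}; combining these gives $\phi_R = 0$. This is genuinely different from the paper's route, which splits $\SW$ via the exact sequence $0\to T_F(Q)\to\SW\to W\to 0$ and computes both pieces by Cartier duality, using $\HHom(Q^{(F^n)},\BG_m)=\Ker(\hat W_n\to W_n)=0$ for the first and $\HHom(W,\BZ_p(1))=\HHom(\BQ_p/\BZ_p,\hat W)=0$ for the second.

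However, the passage from $\BF_p$-algebras to all of $\pNilp$ has a genuine gap, and so does the passage from global morphisms to the sheaf $\HHom$.

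On the first point: knowing $\phi_{R/p^kR}=0$ only tells you that $\phi_R$ takes values in $K:=\Ker(\mu_{p^n}(R)\to\mu_{p^n}(R/p^kR))$, and this kernel is not zero. For instance $\Ker(\mu_p(\BZ/p^2\BZ)\to\mu_p(\BF_p))$ has $p$ elements when $p>2$. The phrase ``$\phi_R$ is detected by its reduction modulo $p$'' is exactly what fails: the whole point is that the reduction vanishes while $\phi_R$ itself may not. The ``derived Nakayama'' version does not repair this: derived Nakayama says that a derived $p$-complete object $M$ with $M\otimes^L\BF_p = 0$ vanishes, but derived $p$-completeness of $\SW$ and of $\BZ_p(1)$ does not give derived $p$-completeness of the sheaf $\HHom(\SW,\BZ_p(1))$ in degree zero, and even if it did, vanishing of $\HHom$ on $\BF_p$-algebras is not the same as vanishing of $\HHom\otimes^L\BF_p$. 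A warning sign that something general is being asserted which cannot be general: your argument, read verbatim with $\SW$ replaced by $W$, would claim that every $\phi\colon W\to\mu_{p^n}$ vanishes; by Cartier duality the sheaf $\HHom(W,\mu_{p^n})$ is $\hat W[p^n]$, which is nonzero (e.g.\ over $R=\BF_p[x]/(x^p)$ one has $p[a]=V[a^p]=0$ for $a\in xR$, so $\hat W(R)[p]\neq 0$), and what actually vanishes in that case is only the Tate module $T_p(\hat W)$ --- this is exactly the extra leverage the paper gets by working with $\BZ_p(1)=\lim_n\mu_{p^n}$ rather than with each $\mu_{p^n}$ separately, and by running the computation sheaf-theoretically via Cartier duality.

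On the second point: the statement is that the \emph{sheaf} $\HHom(\SW,\BZ_p(1))$ vanishes, i.e.\ $\Hom_{\pNilp/R}(\SW|_R,\BZ_p(1)|_R)=0$ for every $R\in\pNilp$, not merely that global morphisms $\SW\to\BZ_p(1)$ over $\Spf\BZ_p$ vanish. Your argument fixes a global $\phi$ and uses naturality with respect to $\Fr_R$, which is available because $\Fr_R$ is a morphism in $\pNilp$; but for a section $\phi$ of $\HHom$ over a base $R$, the absolute Frobenius $\Fr_{R'}$ of an $R$-algebra $R'$ is not an $R$-algebra homomorphism unless $R$ is fixed by Frobenius, so the identity $\phi_{R'}\circ F = p\cdot\phi_{R'}$ is not available relative to a general base. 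The paper avoids this entirely because Cartier duality is a sheaf-level statement.

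So the Frobenius idea is a nice alternative ingredient, but you would still need (a) an honest argument bridging from characteristic $p$ to mixed characteristic --- most plausibly by importing the Cartier-duality computation of $\HHom(W,\BZ_p(1))$ anyway --- and (b) a formulation that works relative to an arbitrary base $R$, to actually prove the proposition.
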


\begin{proof}
We will use well known facts about Cartier duality between $W$ and $\hat W$ (e.g., see \cite[Appendix A]{On the Lau} and references therein).

The exact sequence \eqref{e:Akhil's sequence2} shows that it suffices to prove that
\begin{equation}  \label{e:again HHom=0}
\HHom (W,\BZ_p (1))=0.
\end{equation}
\begin{equation} \label{e:HHom=0}
\HHom (T_F(Q),\BZ_p (1))=0,
\end{equation}

By Cartier duality between $W$ and $\hat W$, we have $\HHom (W,\BZ_p (1))=\HHom (\BQ_p/\BZ_p, \hat W).$
But $\HHom (\BQ_p/\BZ_p, \hat W)=0$ because if $p^n=0$ in $R$ then $p^n\hat W_R\subset V(\hat W_R)$. This proves \eqref{e:again HHom=0}.

To prove \eqref{e:HHom=0}, it suffices to show that $\HHom (T_F(Q),\BG_m)=0$.
Since $F:\hat W\to\hat W$ is surjective, $Q^{(F^n)}=W^{(F^n)}/\hat W^{(F^n)}$ and $T_F(Q)=T_F(W)/T_F(\hat W)$. We have
\[
\HHom (T_F(W),\BG_m)=\limton\HHom (W^{(F^n)},\BG_m)
\]
because the functors $W^{(F^n)}$ are affine schemes. So
\[
\HHom (T_F(Q),\BG_m)=\limton\HHom (Q^{(F^n)},\BG_m).
\]
It remains to show that $\HHom (Q^{(F^n)},\BG_m)=0$. Indeed, we have $\HHom (\hat W^{(F^n)},\BG_m)=W_n$, $\HHom (W^{(F^n)},\BG_m)=\hat W_n$, and
$\Ker (\hat W_n\to W_n)=0$.
\end{proof}

\section{The $\BZ$-graded ring space $\SW^\oplus$}   \label{s:check W oplus}
In \S\ref{s:check W} we defined a ring space $\SW$ and maps $F,\hV:\SW\to\SW$.
The graded ring space $\SW^\oplus$ will be obtained from the triple $(\SW, F,\hV)$ by applying a general algebraic construction descirbed in the next subsection.

\subsection{The Lau equivalence} \label{ss:Lau equivalence} 
In this subsection we retell a part of E.~Lau's paper \cite{Lau21} (but not quite literally).

\subsubsection{The category $\cC$}  \label{sss:triples A,t,u} \label{sss:cC non-economic}
Let $\cC$ be the category of triples $(A,t,u)$, where $A=\bigoplus\limits _{i\in\BZ}A_i$ is a $\BZ$-graded ring and $t\in A_{-1}$, $u\in A_1$ are such that

(i) multiplication by $u$ induces an isomorphism $A_i\iso A_{i+1}$ for $i\ge 1$;

(ii) multiplication by $t$ induces an isomorphism $A_i\iso A_{i-1}$ for $i\le 0$.

Because of (i) and (ii), $\cC$ has an ``economic'' description. To formulate it, we will define a category $\cC^{\ec}$ (where ``ec'' stands for ``economic'') and construct an equivalence $\cC\iso\cC^{\ec}$.

\subsubsection{The category $\cC^{\ec}$} \label{sss:cC economic}
Let $\cC^{\ec}$ be the category of diagrams 
\begin{equation}   \label{e:objects of C ec}
A_0\underset{V}{\overset{F}\rightleftarrows} A_1,
\end{equation}
where $A_0$ and $A_1$ are rings, $F$ is a ring homomorphism, and $V$ is an additive map such that
\begin{equation}   \label{e:aVb}
a\cdot V(a')=V(F(a)a') \quad  \mbox{ for } a\in A_0, \, a'\in A_1
\end{equation}
and for $a'\in A_1$ we have
\begin{equation}   \label{e:FV=p}
F(V(a'))=\bbp a', \quad  \mbox{ where } {\bbp}:=F(V(1))\in A_1.
\end{equation}
Note that by \eqref{e:aVb} we have $VF=V(1)$, which implies \eqref{e:FV=p} if $a'\in F(A_0)$ (but not in general).

\subsubsection{The functor $\cC\to\cC^{\ec}$}   \label{sss:from cC to economic category}
Given a triple $(A,t,u)\in \cC$, we construct a diagram \eqref{e:objects of C ec} as follows:

(i)  $A_0$ is the $0$-th graded component of $A$;

(ii) $A_1$ is the first graded component of $A$, and the product of $x,y\in A_1$ is as follows: first multiply $x$ by $y$ in $A$, then apply the isomorphism $A_2\iso A_1$ inverse to $u:A_1\iso A_2$; equivalently,
the product in $A_1$ comes from the product in $A/(u-1)A$ and the natural map $A_1\to A/(u-1)A$, which is an isomorphism by virtue of \S\ref{sss:triples A,t,u}(i);

(iii) $F:A_0\to A_1$ is multiplication by $u$, and $V:A_1\to A_0$ is multiplication by $t$.

\begin{prop}    \label{p:Lau equivalence}
The above functor $\cC\to\cC^{\ec}$ is an equivalence. The inverse functor $\fL :\cC^{\ec}\to\cC$ takes a diagram $A_0\underset{V}{\overset{F}\rightleftarrows} A_1$ to a certain graded subring of the graded ring 
$$A_0[t,t^{-1}]\times A_1[u,u^{-1}], \quad \deg t:=-1, \; \deg u=1;$$
namely, the $i$-th graded component of the subring is the set of pairs $(at^{-i},a'u^i)$, where $a\in A_0$ and $a'\in A_1$ satisfy the relation
\begin{equation}  \label{e:Lau equivalence}
a'=\bbp^{-i}F(a) \mbox{ if } i\le 0, \quad a=V(\bbp^{i-1}a')  \mbox{ if } i>0.
\end{equation}
(As before, $\bbp:=F(V(1))\in A_1$.) \qed
\end{prop}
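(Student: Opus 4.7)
The plan is to verify that each of the two functors is well defined and that they are mutually inverse. None of the verifications is deep, but one of them requires a short case analysis.

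For the functor $\cC\to\cC^{\ec}$: the multiplication on $A_1$ defined in \S\ref{sss:from cC to economic category}(ii) is associative with unit $u$, by associativity of the product in $A$ together with axiom (i) of $\cC$. The map $F:A_0\to A_1$, $a\mapsto ua$, is a ring homomorphism since $F(a)\cdot_{A_1} F(b)$ is by definition the unique $z\in A_1$ with $uz=u^2ab$, namely $z=uab=F(ab)$. The identity \eqref{e:aVb} reads $a\cdot(ta')=taa'\in A_0$ on the left, while $V(F(a)a')=t\cdot_{A_1}(uaa')$ is the unique $z\in A_0$ computed as $taa'$ via the inverse of $u:A_0\iso A_1$; both agree. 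The identity \eqref{e:FV=p} is similar.

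For $\fL$: the prescribed subset of $A_0[t,t^{-1}]\times A_1[u,u^{-1}]$ is clearly a graded additive subgroup and contains the pair $(1,1)$, since for $i=0$ the relation \eqref{e:Lau equivalence} says $1=F(1)$. The main step is closure under multiplication: given degree-$i$ and degree-$j$ elements, one checks that the defining relation in degree $i+j$ holds. If $i,j\le 0$ this is immediate from $F$ being a ring homomorphism. If $i,j\ge 1$, one needs
\[
V(\bbp^{i-1}a')\cdot V(\bbp^{j-1}b')=V(\bbp^{i+j-1}a'b'),
\]
which follows by applying \eqref{e:aVb} once and then \eqref{e:FV=p}. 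The mixed cases $i\le 0<j$ (distinguishing whether $i+j$ is $\le 0$ or $>0$) reduce directly to \eqref{e:aVb}. Axioms (i)–(ii) of \S\ref{sss:triples A,t,u} hold by inspection, since multiplication by $u$ (resp.\ $t$) shifts the $A_1$-parametrization of positive degrees (resp.\ the $A_0$-parametrization of nonpositive degrees) bijectively.

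To see the two functors are mutually inverse: given $(A_0,A_1,F,V)\in\cC^{\ec}$, the degree-$0$ and degree-$1$ components of $\fL(A_0,A_1,F,V)$ are tautologically $A_0$ and $A_1$ with their prescribed ring structures, the element $t$ of degree $-1$ realizes $V$ as multiplication by $t$, and the element $u$ of degree $1$ realizes $F$ as multiplication by $u$; so composing $\fL$ with the functor of \S\ref{sss:from cC to economic category} recovers the original object. Conversely, given $(A,t,u)\in\cC$, axioms (i)–(ii) imply that each $A_i$ for $i\ge 1$ (resp.\ $i\le 0$) is parametrized by $A_1$ via $a'\mapsto a'u^{i-1}$ (resp.\ by $A_0$ via $a\mapsto at^{-i}$), and the resulting pairs $(at^{-i},a'u^i)$ are precisely those satisfying \eqref{e:Lau equivalence}: the relations encode exactly the identities $a'u^i=u^{i-1}(ua)=u^{i-1}F(a)$ (multiplied by the appropriate power of $\bbp=u^2t$) in nonpositive degrees, and the symmetric identities in positive degrees.

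The main obstacle is the case analysis for closure of $\fL$ under multiplication, but as indicated each case reduces directly to one of the defining identities \eqref{e:aVb} or \eqref{e:FV=p} of $\cC^{\ec}$.
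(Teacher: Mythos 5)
The paper explicitly leaves this proof to the reader, offering only the motivating observation in Remark \S\ref{sss:idea behind Lau equivalence}(i) that for $(A,t,u)\in\cC$ the map $A\to A[1/t]\times A[1/u]=A_0[t,t^{-1}]\times A_1[u,u^{-1}]$ is injective; your writeup fills in exactly this intended argument and is essentially correct. One small imprecision: in the mixed case $i\le 0<j$ with $i+j\le 0$, verifying the defining relation $a'_1a'_2=\bbp^{-(i+j)}F(a_1a_2)$ requires rewriting $F(a_2)=F(V(\bbp^{j-1}a'_2))=\bbp^j a'_2$, which invokes \eqref{e:FV=p} rather than \eqref{e:aVb} as you state; only the subcase $i+j>0$ reduces to \eqref{e:aVb} alone.
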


The functor $\fL :\cC^{\ec}\to\cC$ will be called the \emph{Lau equivalence.}

The proof of the proposition is left to the reader. However, let us make some remarks.

\subsubsection{Remarks}   \label{sss:idea behind Lau equivalence}
(i) The description of $\fL$ from Proposition~\ref{p:Lau equivalence} is motivated by the following observation:
if $(A,t,u)\in \cC$ then the natural map $A\to A[1/t]\times A[1/u]$ is injective, $A[1/t]=A_0[t,t^{-1}]$, and $A[1/u]=A_1[u,u^{-1}]$, where the ring structure on $A_1$ is as in \S\ref{sss:from cC to economic category}(ii).

(ii) If $(A,t,u)\in \cC$ then the nonpositively graded part of $A$ identifies with $A_0[t]$ and the positively graded one identifies with $uA_1[u]$, where the ring structure on $A_1$ is as in \S\ref{sss:from cC to economic category}(ii).
So instead of describing $A$ as a subring of $A_0[t,t^{-1}]\times A_1[u,u^{-1}]$, one could describe $A$ as the group $A[t]\oplus uA_1[u]$ equipped with a ``tricky'' multiplication operation.

\subsubsection{Remarks (to be used in \S\ref{sss:independence of bu})}   \label{sss:changing V}
(i) The functor $\cC\to\cC^{\ec}$ from \S\ref{sss:from cC to economic category} can also be described as follows: it takes $(A,t,u)\in\cC$ to $A_0\underset{V}{\overset{F}\rightleftarrows} R$, where $R$ is the $0$-th graded component of the localization $A[u^{-1}]$,
the map $F:A_0\to R$ comes from the map $A\to A[u^{-1}]$, and $V:R\to A_0$ is the composition of $u:R\iso A_1$ and $t:A_1\to A_0$.

(ii) Let $A_0\underset{V}{\overset{F}\rightleftarrows} R$ be an object of $\cC^{\ec}$, and let $(A,t,u)\in\cC$ be its image under $\fL$. Let $\alpha\in R^\times$ and $V'=V\circ\alpha$, i.e., $V'(a')=V(\alpha a')$ for all $a'\in R$.
Then $A_0\underset{V'}{\overset{F}\rightleftarrows} R$ is an object of $\cC^{\ec}$, and its image under $\fL$ is canonically isomorphic to $(A,t,\alpha u)$. The latter follows from the description of the functor $\cC\to\cC^{\ec}$ given in (i).

\subsubsection{Some examples from \cite{Lau21}}   \label{sss:Witt frame}
(i) For any ring $R$ the maps $F,V:W(R)\to W(R)$ satisfy the properties from \S\ref{sss:cC economic} (with $\bbp=p$). Applying the Lau equivalence to the diagram $W(R)\underset{V}{\overset{F}\rightleftarrows} W(R)$, one gets an object of $\cC$.
Following \cite{Lau21}, we call it the \emph{Witt frame}. Following \cite{Daniels}, we denote it by $W (R)^\oplus$ (in \cite[ Example 2.1.3]{Lau21} it is denoted by $\underline{W}(R)$).

(ii) Let $n\in\BN$ and let $R$ be an $\BF_p$-algebra. Then we have a map $F:W_n(R)\to W_n(R)$ (in addition to $V:W_n(R)\to W_n(R)$).
Applying the Lau equivalence to the diagram $W_n(R)\underset{V}{\overset{F}\rightleftarrows} W_n(R)$, one gets an object of $\cC$.
Following \cite{Lau21}, we call it the \emph{$n$-truncated Witt frame}. Following \cite{Daniels}, we denote it by $W_n(R)^\oplus$ (in Example~2.1.6 of \cite{Lau21} it is denoted by $\underline{W_n}(R)$).

\medskip

Let us note that $W (R)^\oplus$ and $W_n (R)^\oplus$ are particular examples of ``higher frames'' in the sense of \cite[\S 2]{Lau21}. 

\subsection{Definition of $\SW^\oplus$} \label{ss:check W oplus} 
\subsubsection{Definition}   \label{sss:check W oplus}
For any $R\in\pNilp$, we defined in \S\ref{s:check W} a diagram $\SW (R)\underset{\hV}{\overset{F}\rightleftarrows} \SW (R)$, which is an object of $\cC^{\ec}$.
The image of this diagram under the functor $\fL :\cC^{\ec}\to\cC$ from Proposition~\ref{p:Lau equivalence} is denoted by $\SW^\oplus (R)$. Thus $\SW^\oplus (R)$ is a $\BZ$-graded algebra over the $\BZ$-graded ring $\BZ_p [t,u]$, where $\deg t=-1$, $\deg u=1$. 
The description of $\fL$ given in Proposition~\ref{p:Lau equivalence} yields a canonical monomorphism of $\BZ$-graded rings
\begin{equation}  \label{e:check W oplus maps to the product}
\SW^\oplus\mono \SW [u,u^{-1}]\times \SW [t,t^{-1}] 
\end{equation}
such that the map $\SW^\oplus\to \SW [u,u^{-1}]$ induced by \eqref{e:check W oplus maps to the product} is an isomorphism in positive degrees and
the map $\SW^\oplus\to\SW [t,t^{-1}]$  is an isomorphism in non-positive degrees. In particular, each graded component of $\SW^\oplus$ is isomorphic to $\SW$ as a sheaf of abelian groups.

\subsubsection{Independence of the choice of $\bu$}  \label{sss:independence of bu}
Recall that the operator $\hV$ from \S\ref{ss:2hat V} depends on the choice of $\bu\in W(\BZ_p)$. By \eqref{e:doesn't matter} and \S\ref{sss:changing V}(ii), $\SW^\oplus (R)$ does not depend on this choice up to canonical isomorphism of $\BZ$-graded $\BZ_p [t]$-algebras (rather than of $\BZ_p [t,u]$-algebras).

\section{The ring stacks $\SR_n$ and $\SR_n^\oplus$} \label{s:2the ring stacks}
\subsection{Ring groupoid generalities}  \label{ss:Ring groupoids}
The goal of this subsection is to give references to the basic definitions from the elementary survey \cite{ring groupoid} and to introduce the somewhat nonstandard notation related to cones (see \S\ref{sss:cone notation} below).

\subsubsection{Ring groupoids}    \label{sss:Ring groupoids}
A definition of the $(2,1)$-category $\RGrpds$ of ring groupoids can be found in \cite[\S 2.2.2]{ring groupoid}. Section 3 of \cite{ring groupoid} contains several equivalent definitions (or incarnations) of the 1-category of ring groupoids and the definition of the functor from it to the $(2,1)$-category $\RGrpds$. 
Here are some of the incarnations\footnote{In each case we describe the objects. Morphisms are defined in the most naive way.} of the 1-category of ring groupoids discussed in \cite{ring groupoid}:

(i) groupoids internal to the category of rings (see \cite[\S 3.2.2]{ring groupoid});

(ii) DG rings $A^\bcdot$ with $A^i=0$ for $i\ne 0,-1$ (see \cite[\S 3.3.3]{ring groupoid});

(iii) quasi-ideal pairs (see \cite[\S 3.3.1]{ring groupoid}), i.e., diagrams $I\overset{d}\longrightarrow A$, where $A$ is a ring, $I$ is an $A$-module, and $d:I\to A$ is an $A$-linear map such that $d(x)\cdot y=d(y)\cdot x$ for all $x,y\in I$ (in this situation one says that $(I,d)$ is a quasi-ideal in $A$).

The functor from (iii) to (i) is described in \cite[\S 3.4.7]{ring groupoid}.

\subsubsection{Notation: $\Cone$ and $\cone$}   \label{sss:cone notation}
The object of the $(2,1)$-category of ring groupoids corresponding to a quasi-ideal $I\overset{d}\longrightarrow A$ is denoted by $\Cone (I\overset{d}\longrightarrow A)$. On the other hand, $\cone (I\overset{d}\longrightarrow A)$ will denote a certain object of the \emph{1-category} of DG rings from \S\ref{sss:Ring groupoids}(ii);
namely, the DG ring is the ring $A\oplus I$, where $A$ is in degree 0, $I$ is in degree $-1$, the diffferential is $d:I\to A$, and the multiplication operation is the obvious one.

\subsubsection{Ring stacks}  \label{sss:Ring stacks generalities}
Let $S$ be a site (usually, $S=\pNilp^{\op}$).
A ring stack on $S$ is a prestack on $S$ with values in $\RGrpds$ which happens to be a stack; equivalently, a ring stack on $S$  is a ring object in the $(2,1)$-category of stacks of groupoids on~$S$ (the meaning of these words is explained in \cite[\S 2.4.1]{ring groupoid}).

If $A$ is a sheaf of rings on $S$ and $(I,d)$ is a quasi-ideal in $A$ then the meaning of the notation $\Cone (I\overset{d}\longrightarrow A)$ and $\cone (I\overset{d}\longrightarrow A)$ is similar to \S\ref{sss:cone notation}, in which $S$ was a point.

\subsubsection{$\BZ$-graded ring stacks}  
The story discussed in \S\ref{sss:Ring groupoids}-\ref{sss:Ring stacks generalities} has an analog in which the word ``ring'' is replaced by the words ``$\BZ$-graded ring''. In this case the role of DG rings from \S\ref{sss:Ring groupoids}(ii) is played by \emph{$\BZ$-graded DG rings} (by a $\BZ$-graded DG ring we mean a DG ring equipped with an \emph{additional} $\BZ$-grading).
The role of the category $\Pol$ from \cite[\S 2.1.2]{ring groupoid} and  \cite[\S\S 2.2.2]{ring groupoid} is played by the category of $\BZ$-graded polynomial algebras in which each variable is homogeneous of some degree.

\subsection{Defining $\SR_n$ and $\SR_n^\oplus$}  \label{ss:Defining hat sR_n}
Let $n\in\BN$. We set
\begin{equation}  \label{e:hat sR_n}
\SR_n:=\Cone (\SW\overset{p^n}\longrightarrow\SW ),
\end{equation}
\begin{equation}  \label{e:hat sR_n oplus}
\SR_n^\oplus:=\Cone (\SW^\oplus\overset{p^n}\longrightarrow\SW^\oplus),
\end{equation}
where $\SW$ is as in \S\ref{ss:Definition of check W} and $\SW^\oplus$ is as in \S\ref{sss:check W oplus}. Thus $\SR_n$ is a $\BZ/p^n\BZ$-algebra stack, and
$\SR_n^\oplus$ is a stack of $\BZ$-graded $\BZ/p^n\BZ$-algebras and even a stack of $\BZ$-graded algebras over the $\BZ$-graded ring $(\BZ/p^n\BZ )[t,u]$, where $\deg t=-1$ and $\deg u=1$.
The map \eqref{e:check W oplus maps to the product} induces a canonical homomorphism of $\BZ$-graded $\BZ/p^n\BZ$-algebras
\begin{equation}  \label{e:hat sR_n oplus maps to the product}
\SR_n^\oplus\to \SR_n [u,u^{-1}]\times \SR_n [t,t^{-1}].
\end{equation}

In the following proposition the word ``degree'' refers to the graded ring structure (we are \emph{not} talking about cohomological degrees).

\begin{prop}  \label{p:positive/nonpositive degrees}
The map $\SR_n^\oplus\to \SR_n [u,u^{-1}]$ induced by \eqref{e:hat sR_n oplus maps to the product} is an isomorphism in positive degrees.
The map $\SR_n^\oplus\to \SR_n [t,t^{-1}]$ induced by \eqref{e:hat sR_n oplus maps to the product} is an isomorphism in non-positive degrees.
\end{prop}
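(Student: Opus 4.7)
The plan is to reduce the claim to the tautological statement for $\SW^\oplus$ already recorded in \S\ref{sss:check W oplus}: the monomorphism \eqref{e:check W oplus maps to the product} identifies $\SW^\oplus$ with a graded subring of $\SW[u,u^{-1}]\times \SW[t,t^{-1}]$ whose projection to $\SW[u,u^{-1}]$ is an isomorphism of abelian sheaves in positive degrees and whose projection to $\SW[t,t^{-1}]$ is an isomorphism in non-positive degrees. Both $\SR_n^\oplus$ and the two factors on the right-hand side of \eqref{e:hat sR_n oplus maps to the product} are defined by applying $\Cone(\cdot\overset{p^n}\to\cdot)$ to the respective graded rings, so everything should follow by functoriality once we verify that this $\Cone$ construction is compatible with passage to individual graded components.

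The first step is the observation that $p^n$ is a degree-zero element, so multiplication by $p^n$ preserves every $\BZ$-grading in sight. Consequently the quasi-ideal $\SW^\oplus\overset{p^n}\to\SW^\oplus$ decomposes as the direct sum, over $i\in\BZ$, of its graded pieces $\SW^\oplus_i\overset{p^n}\to\SW^\oplus_i$; the same applies with $\SW^\oplus$ replaced by $\SW[u,u^{-1}]$ or $\SW[t,t^{-1}]$. Since $\Cone$ commutes with direct sums of quasi-ideals, the $i$-th graded component of $\SR_n^\oplus$ is $\Cone(\SW^\oplus_i\overset{p^n}\to \SW^\oplus_i)$, while the $i$-th graded component of $\SR_n[u,u^{-1}]$ is $\SR_n\cdot u^i$, and similarly for $\SR_n[t,t^{-1}]$.

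The conclusion is then immediate: by the explicit description of the Lau equivalence in Proposition~\ref{p:Lau equivalence} (cf.\ \S\ref{sss:check W oplus}), the abelian-sheaf projection $\SW^\oplus_i\to \SW\cdot u^i$ is an isomorphism for every $i>0$, and the projection $\SW^\oplus_i\to \SW\cdot t^{-i}$ is an isomorphism for every $i\le 0$; both isomorphisms automatically commute with multiplication by $p^n$. Applying $\Cone(\cdot\overset{p^n}\to\cdot)$ degree by degree therefore yields the two isomorphisms asserted in the proposition. I do not expect any step to pose a genuine obstacle: the statement is purely formal, encoding the functoriality of $\Cone$ together with the built-in properties of the Lau equivalence.
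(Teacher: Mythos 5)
Your proof is correct and is essentially the same as the paper's: the paper's one-line proof (``Follows from similar properties of the homomorphism \eqref{e:check W oplus maps to the product}'') is precisely the reduction you spell out, namely that the graded components of $\SW^\oplus$ map isomorphically to those of $\SW[u,u^{-1}]$ (resp.\ $\SW[t,t^{-1}]$) in the stated degree ranges, that $p^n$ is degree-zero, and that $\Cone(\cdot\xrightarrow{p^n}\cdot)$ is applied component-wise. You have merely made the implicit steps explicit.
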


\begin{proof}  
Follows from similar properties of the homomorphism \eqref{e:check W oplus maps to the product}.
\end{proof}

\begin{prop}   \label{p:commutes with filtered colimits}
(i) The functor
\[
R\mapsto \SR_n (R), \quad R\in\pNilp
\]
commutes with filtered colimits.

(ii) The same is true for the functor $R\mapsto \SR_n^\oplus (R)$, \, $R\in\pNilp$.
\end{prop}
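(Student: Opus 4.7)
The plan is to reduce both parts to the fact that $\hat W^{(F^n)}$ and $W_n$ commute with filtered colimits in $\pNilp$. The statement is standard for $W_n=\Spec\BZ[x_0,\ldots,x_{n-1}]$, a finitely presented affine scheme. For $\hat W^{(F^n)}$ one writes it as the ascending union of the finitely presented affine subschemes obtained by bounding the number of nonzero Witt components, bounding the nilpotence index of each, and imposing the closed condition $F^n=0$; such an ind-union of finitely presented schemes commutes with filtered colimits.

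For (i), I would apply the octahedral axiom to the composition
\[
\SW \xrar{\hV^n} \SW \xrar{F^n} \SW.
\]
By Lemma~\ref{l:kernel and cokernel}, $\Cone(\hV^n)\simeq W_n$ concentrated in degree $0$ (since $\hV^n$ is injective with cokernel $W_n$) and $\Cone(F^n)\simeq\hat W^{(F^n)}[1]$ concentrated in degree $-1$ (since $F^n$ is surjective with kernel $\hat W^{(F^n)}$). By Remark~\ref{sss:warning to myself}(i) the composite $F^n\hV^n$ equals multiplication by $p^n\bu_n$; the operator ``multiplication by $\bu_n$'' is an automorphism of the abelian-group sheaf $\SW$, induced from multiplication by the unit $\bu_n\in W(\BZ_p)^\times$ on $W$ and by its image in $Q^{\perf}$ under the presentation $\SW=W\times_QQ^{\perf}$. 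Consequently $\Cone(F^n\hV^n)\simeq\Cone(p^n\colon\SW\to\SW)=\SR_n$, and the octahedron (after rotation) yields a distinguished triangle of fpqc sheaves of chain complexes concentrated in degrees $[-1,0]$:
\[
\hat W^{(F^n)} \longrightarrow W_n \longrightarrow \SR_n \longrightarrow \hat W^{(F^n)}[1].
\]
Filtered colimits of abelian groups are exact and hence preserve distinguished triangles, so commutativity of $\SR_n$ with filtered colimits follows from that of the outer terms.

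For (ii), I would invoke Proposition~\ref{p:positive/nonpositive degrees}: each graded component of $\SR_n^\oplus$ is canonically isomorphic, as an abelian-group-valued sheaf, to $\SR_n$, so $\SR_n^\oplus\simeq\bigoplus_{i\in\BZ}\SR_n$ on the level of underlying chain complexes (forgetting the graded ring structure). Since direct sums commute with filtered colimits, (ii) follows from (i).

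The only step requiring some care is the identification $\Cone(F^n\hV^n)\simeq\SR_n$, equivalently the fact that multiplication by $\bu_n$ acts invertibly on the fpqc sheaf $\SW$ (not merely on $W$). This is immediate when $p>2$, where $\bu_n=1$; in general it essentially foreshadows a mild form of the economic model~\eqref{e:sheared sR_n economically} developed later in the paper.
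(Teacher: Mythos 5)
Your octahedral argument is a nice idea and does work cleanly for $p>2$, where $\bu_n=1$; for part (ii) your reduction via Proposition~\ref{p:positive/nonpositive degrees} coincides with the paper's. However, there is a genuine gap for $p=2$, which is precisely the delicate case.

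The problem is the step where you claim ``multiplication by $\bu_n$ is an automorphism of the abelian-group sheaf $\SW$, induced from multiplication by $\bu_n\in W(\BZ_p)^\times$ on $W$ and by its image in $Q^{\perf}$.'' There is no such compatible image. For $p=2$ one may take $\bu=\bu_n=[-1]$, and this element does \emph{not} lie in $\SW(\BZ_p)\cong\BZ_p\oplus\hat W(\BZ_p)$: the components of the Witt vector $[-1]-1$ all have $2$-adic valuation exactly $1$, so do not converge to $0$, hence $[-1]-1\notin\hat W(\BZ_p)$. Equivalently, $[-1]$ does not lift along $W^{\perf}\to W$, since $W^{\perf}(\BZ/2^m\BZ)=W(\BF_2)=\BZ_2$ and $[-1]$ is not in the image of the canonical section $s\colon\BZ_2\to W(\BZ/2^m\BZ)$ for $m\ge 2$ (cf.\ §\ref{sss:admissible rings}). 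So the element $\bu_n$ does not act on $\SW=W\times_Q Q^{\perf}$, and ``multiplication by $\bu_n$'' is not available as an operator on $\SW$.

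As a consequence your octahedron identifies $\Cone(\hat W^{(F^n)}\to W_n)$ with $\Cone(p^n\bu_n\colon\SW\to\SW)$, not with $\SR_n=\Cone(p^n\colon\SW\to\SW)$. The paper makes exactly this point in §\ref{sss:I_{n,m}/I'_{n,m} in some cases}(i) and Lemma~\ref{l:the naive ring stack for any p}: for $p=2$ the ring stack $\Cone(\hat W^{(F^n)}\to W_n)$ is \emph{not} a $\BZ/2^n\BZ$-algebra, hence is genuinely different from $\SR_n$. Your remark that the argument ``foreshadows a mild form of the economic model~\eqref{e:sheared sR_n economically}'' actually lands on the $m=0$ model, which is the one that fails at $p=2$. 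The paper's cited proof instead uses the more careful economic model $\Cone\bigl(\hat W^{(F^{m+n})}/V^n(\hat W^{(F^m)})\to W_n\bigr)$ from §\ref{s:economic models for hat sR_n} with $m\ge\delta_p$ (so $m\ge 1$ when $p=2$), together with Lemma~\ref{l:the quotient is an ind-scheme} which exhibits the source as an ind-finite ind-scheme; that is where the finite-presentation argument you invoke for $\hat W^{(F^n)}$ and $W_n$ is applied legitimately. To repair your argument at $p=2$ you would need to factor $p^n$ through operators on $\SW$ whose cones one can identify with the pieces of this refined model, rather than through $\hV^n$ and $F^n$.
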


\begin{proof}
For statement (i) see \cite[Cor.~3.40]{BMVZ}. %%
On the other hand, in \S\ref{s:economic models for hat sR_n} we will give a description of $\SR_n$ which makes statement (i) obvious: namely, we will represent $\SR_n$ as $\Cone (H\to W_n)$, where $H$ is a group ind-scheme which is ind-finite over $\Spf\BZ_p$,
see formula \eqref{e:The economic model} and Lemma~\ref{l:the quotient is an ind-scheme}.  

Statement (ii) follows from (i) and Proposition~\ref{p:positive/nonpositive degrees}.
\end{proof}

The ring stacks $\SR_n$ form a projective system. The same is true for~$\SR_n^\oplus$.

\begin{prop}   \label{p:limit of SR_n}
The projective limit of the ring stacks $\SR_n$ equals $\SW$.
\end{prop}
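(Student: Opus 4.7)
The plan is to verify the claim pointwise, relying on derived $p$-completeness of $\SW$ (Lemma~\ref{l:derived completeness of SW}). In the projective system of $\SR_n$, the transition $\SR_{n+1}\to\SR_n$ comes from the map of quasi-ideal pairs $(\SW\overset{p^{n+1}}\longrightarrow\SW)\to(\SW\overset{p^n}\longrightarrow\SW)$ given by multiplication by $p$ on the source and the identity on the target (forced by the commutativity $p^n\cdot p=\id\cdot p^{n+1}$). There is a canonical ring-stack map $\SW\to\SR_n$ induced by $(0\to\SW)\to(\SW\overset{p^n}\longrightarrow\SW)$, and these are compatible as $n$ varies. Because $\SW$ is a sheaf of sets and sequential limits of sheaves of ring groupoids are computed pointwise (no sheafification needed when the answer is already a sheaf of sets), I would reduce to showing $\SW(R)\iso\limfromn\SR_n(R)$ in the $(2,1)$-category of ring groupoids for every $R\in\pNilp$.

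The groupoid $\SR_n(R)$ has object set $\SW(R)$ and morphisms $x\to y$ given by $z\in\SW(R)$ with $p^nz=y-x$; hence $\pi_0\SR_n(R)=\SW(R)/p^n\SW(R)$ and $\pi_1\SR_n(R)=\SW(R)[p^n]$, with transitions the natural projection on $\pi_0$ and multiplication by $p$ on $\pi_1$. The Milnor-type exact sequences for a sequential homotopy limit of groupoids give
\[
\pi_1\limfromn\SR_n(R)=\lim_n\SW(R)[p^n],\qquad 0\to\lim\nolimits^1_n\SW(R)[p^n]\to\pi_0\limfromn\SR_n(R)\to\lim_n\SW(R)/p^n\SW(R)\to 0,
\]
where on $\SW(R)[p^n]$ the transitions are multiplication by $p$.

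By Lemma~\ref{l:derived completeness of SW} together with Remark~\ref{r:naive derived p-completeness is good}, $\SW(R)$ is derived $p$-complete, so the canonical map $\SW(R)\iso R\lim_n\SW(R)\otimes^{\mathbb{L}}\bZ/p^n\bZ$ is an isomorphism. The complex $\SW(R)\otimes^{\mathbb{L}}\bZ/p^n\bZ$ lives in cohomological degrees $[-1,0]$ with $H^{-1}=\SW(R)[p^n]$ and $H^0=\SW(R)/p^n\SW(R)$, so the standard $\lim\nolimits^1$-sequence computing cohomology of a sequential derived inverse limit forces $\lim_n\SW(R)[p^n]=0$ (from vanishing of $H^{-1}$) as well as $\lim\nolimits^1_n\SW(R)[p^n]=0$ and $\lim_n\SW(R)/p^n\SW(R)=\SW(R)$ (from identifying $H^0$ with $\SW(R)$). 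Substituting into the formulas above yields $\pi_1\limfromn\SR_n(R)=0$ and $\pi_0\limfromn\SR_n(R)=\SW(R)$, and under these identifications the canonical map induced by $\SW\to\SR_n$ is the identity. The only real obstacle is bookkeeping: pinning down the direction of the transition maps on $\pi_1$ and $\pi_0$ and verifying that the pointwise-to-global passage is automatic. Once this is clean, derived $p$-completeness of $\SW$ does all the work.
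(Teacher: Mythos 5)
Your overall strategy---work pointwise and reduce to derived $p$-completeness of $\SW$---matches the paper's proof (Lemma~\ref{l:derived completeness of SW} combined with Lemma~\ref{l:Picard stacks}, the latter being a pointwise statement by Lemma~\ref{l:repleteness}). But there is a concrete error in your $\pi_0$ step. Writing $M=\SW(R)$ and $K_n=M\otimes^{\bL}\BZ/p^n\BZ$, the Milnor sequence gives $0\to\lim\nolimits^1_n M[p^n]\to H^0(R\lim_n K_n)\to\lim_n M/p^n M\to 0$, and derived $p$-completeness identifies the middle term with $M$; it does \emph{not} force $\lim\nolimits^1_n M[p^n]=0$ and $\lim_n M/p^n M=M$ separately, as you assert. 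In fact for derived $p$-complete $M$ one has $\lim\nolimits^1_n M[p^n]\cong\bigcap_n p^n M$ (compare the tower $(M,\times p)$ with its subtower $(M[p^n],\times p)$ and quotient tower $(p^n M,\mathrm{incl})$), and derived $p$-completeness does not imply classical $p$-completeness. The paper itself supplies a counterexample to your claim: by E.~Lau's remark at the end of Appendix~\ref{s:SW (R)}, if $R$ is semiperfect but not perfect then $\SW(R)$ is derived $p$-complete but $\bigcap_n p^n\SW(R)=W(J)/\hat W_{\ttop}(J)\neq 0$, so both of your claimed vanishings fail for such $R$.

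The conclusion nonetheless survives because those vanishings were never needed. The Picard groupoid $\SR_n(R)$ \emph{is} $K_n$ placed in cohomological degrees $[-1,0]$, the limit of Picard groupoids is $\tau^{\le 0}$ of the derived limit in $D(\BZ)$, and derived $p$-completeness says precisely $R\lim_n K_n=M$, concentrated in degree $0$. Hence $\pi_1=H^{-1}=0$ and $\pi_0=H^0=M$ directly, and the Milnor-sequence bookkeeping (together with the false intermediate assertions) should simply be deleted. For comparison, the paper's Lemma~\ref{l:Picard stacks} packages this same pointwise argument by interpreting $\Cone(p^n)$ as the stack of extensions of $p^{-n}\underline\BZ/\underline\BZ$ by $A$, so that the limit becomes the stack of extensions of $\underline\BZ[p^{-1}]/\underline\BZ$ by $A$, which the vanishing $R\HHom(\underline\BZ[p^{-1}],A)=0$ identifies with $A$.
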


\begin{proof}
Combine Lemma~\ref{l:derived completeness of SW} with Lemma~\ref{l:Picard stacks} from Appendix~\ref{s:derived completeness}.
\end{proof}

\subsubsection{Remark}   \label{sss:limit of SR_n^oplus}
The projective limit of the stacks $\SR_n^\oplus$ is a \emph{sheaf}, which \emph{strictly} con\-tains~$\SW^\oplus$.
This easily follows from derived $p$-completeness of $\SW$ and the fact
that as a sheaf of abelian groups, $\SW^\oplus$ is isomorphic to the direct sum of countably many copies of $\SW$ (see the end of~\S\ref{sss:check W oplus}).

\subsection{A more economic model for $\SR_n$}  \label{ss:A more economic model}
By a \emph{model} for a ring stack we mean its realization as a $\Cone$ of a quasi-ideal. 
In this subsection  we discuss a model for $\SR_n$, which is more economic than the one provided by \eqref{e:2hat sR_n}. We follow \cite{Sheared,Akhil2}, where the case $n=1$ is considered.

\subsubsection{The ring space $W\times_{Q,F^n}Q$}   
Let $W\times_{Q,F^n}Q$ denote the fiber product in the Cartesian square
\begin{equation}  \label{e:the Cartesian square}
\xymatrix{
W\times_{Q,F^n}Q\ar[r] \ar[d] & Q\ar[d]^{F^n}\\
W\ar[r] & Q
}
\end{equation} 
(the lower horizontal arrow is the projection $W\epi W/\hat W=Q$). We have a canonical homomorphism $W\times_{Q,F^{n+1}}Q\overset{(\id,F)}\longrightarrow W\times_{Q,F^n}Q$, and
\begin{equation} \label{e:check W as a limit}
\leftlimit{n} (W\times_{Q,F^n}Q)=\SW .
\end{equation} 

\subsubsection{Remark} \label{sss:remark on surjectivity}
Surjectivity of $F:\hat W\to\hat W$ implies that the map 
\begin{equation}   \label{e:W to W times_Q Q}
W\to W\times_{Q,F^n}Q, \;\quad w\mapsto (F^n w,\bar w)
\end{equation}
is surjective. It induces an isomorphism $W/\hat W^{(F^n)}\iso W\times_{Q,F^n}Q$,
so  \eqref{e:check W as a limit} is a reformulation of \eqref{e:3check W directly}

\subsubsection{A model for $\SR_n$}    \label{sss:A more economic model}
The map $W\times_{Q,F^n}Q\overset{p^n}\longrightarrow W\times_{Q,F^n}Q$ factors as 
$$W\times_{Q,F^n}Q\overset{\pi}\epi W\to W\times_{Q,F^n}Q,$$
where $\pi$ is the projection and the second map is
\begin{equation}  \label{e:A more economic model}
W\overset{(p^n ,\hV^n)}\longrightarrow W\times_{Q,F^n}Q.
\end{equation}
Consider $W$ as a module over $W\times_{Q,F^n}Q$ via $\pi :W\times_{Q,F^n}Q\epi W$, then \eqref{e:A more economic model} is a quasi-ideal. Let
\[
A_n:=\cone (W\overset{(p^n ,\hV^n)}\longrightarrow W\times_{Q,F^n}Q).
\]

Let us show that the DG ring $A_n$ is a model for $\SR_n$. By construction, $A_n$ 
is a quotient of the DG ring $\cone (W\times_{Q,F^n}Q\overset{p^n}\longrightarrow W\times_{Q,F^n}Q)$; the latter is a 
quotient of $\cone (\SW\overset{p^n}\longrightarrow \SW )$ by \eqref{e:check W as a limit}. 
Thus the DG ring $\cone (\SW\overset{p^n}\longrightarrow \SW )$ maps onto $A_n$.

\begin{prop}   \label{p:quasi-isomorphism}
This map is a quasi-isomorphism, so it induces an isomorphism 
\begin{equation}   \label{e:hat sR_n =Cone(model)}
\SR_n:=\Cone (\SW\overset{p^n}\longrightarrow \SW )\iso \Cone (W\overset{(p^n ,\hV^n)}\longrightarrow W\times_{Q,F^n}Q)
\end{equation}
\end{prop}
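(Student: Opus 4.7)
The plan is to exhibit the natural map $\cone(\SW\overset{p^n}\longrightarrow\SW)\to A_n$ as a termwise surjection of two-term complexes of fpqc sheaves, and then show that its kernel complex is acyclic; the conclusion will then follow from the long exact cohomology sequence. In degree $0$ the map is the canonical projection $\SW\twoheadrightarrow W\times_{Q,F^n}Q$ coming from \eqref{e:check W as a limit}, and in degree $-1$ it is the canonical surjection $\pi:\SW\twoheadrightarrow W$ from \eqref{e:Akhil's sequence2}. Commutativity of the resulting square, evaluated on $(w,q_\bullet)\in W\times_Q Q^{\perf}=\SW$, reduces to the identity $\hV^n q_0=p^n q_n$ in $Q$ with $q_0=\bar w$; this follows from $\hV^n F^n=p^n$ on $Q$, itself a consequence of $F\hV=\hV F=p$ in $Q$ (the image of $\bp=p\bu$ in $Q$ equals $p\bar\bu=p$ by \eqref{e:property of bar bu}).

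Termwise surjectivity of the vertical maps is immediate: $\SW\twoheadrightarrow W$ is surjective by \eqref{e:Akhil's sequence2}, and $\SW\twoheadrightarrow W\times_{Q,F^n}Q$ factors as $\SW\twoheadrightarrow W\twoheadrightarrow W\times_{Q,F^n}Q$, the second arrow being surjective by Remark~\ref{sss:remark on surjectivity}. So one obtains a short exact sequence of two-term complexes of fpqc sheaves
\[
0\to K^{\bcdot}\to \cone(\SW\overset{p^n}\longrightarrow\SW)\to A_n\to 0,
\]
where $K^{-1}=T_F(Q)$ and $K^0=\Ker(\SW\twoheadrightarrow W\times_{Q,F^n}Q)$. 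Inside $\SW=W\times_Q Q^{\perf}$ one identifies $K^0$ with $T_F^{\ge n}(Q):=\{z\in T_F(Q):z_n=0\}$; since $z_n=0$ forces $z_m=0$ for $m\le n$ and $z_m\in Q^{(F^{m-n})}$ for $m>n$, the shift $s:(0,\ldots,0,z_{n+1},z_{n+2},\ldots)\mapsto (z_{n+1},z_{n+2},\ldots)$ yields an isomorphism $s:T_F^{\ge n}(Q)\iso T_F(Q)$.

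The main step is to prove that $K^{\bcdot}$ is acyclic. After composition with $s$, the differential $p^n:T_F(Q)\to T_F^{\ge n}(Q)$ becomes an endomorphism of $T_F(Q)$; combining $p^n=\hV^n F^n$ on $Q$ with $F^n(z_{m+n})=z_m$ yields $(s\circ p^n)(z)_m=p^n z_{m+n}=\hV^n z_m$, i.e.\ $s\circ p^n$ is componentwise $\hV^n$. By Corollary~\ref{c:2acyclicity of bicomplex}, $\hV:Q^{(F^m)}\to Q^{(F^m)}$ is an isomorphism for each $m\ge 1$, and these isomorphisms are compatible with the transition maps $F$ because $F\hV=\hV F=p$ on $Q$; hence componentwise $\hV^n$ is an automorphism of $T_F(Q)$, so $K^{\bcdot}$ is acyclic. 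The main technical point I expect is the bookkeeping involved in identifying $K^0$ with $T_F^{\ge n}(Q)$ and constructing the shift isomorphism $s$; once this is in place, the proof reduces to the basic identity $F\hV=\hV F=p$ on $Q$.
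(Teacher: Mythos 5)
Your proposal follows the same route as the paper: exhibit the map as a termwise surjection of two-term complexes, identify the kernel complex, and prove it is acyclic using Corollary~\ref{c:2acyclicity of bicomplex}. Your identification of $K^0=\{z\in T_F(Q):z_n=0\}$, the shift isomorphism $s$, the computation $s\circ p^n=\hV^n$ componentwise, and the final appeal to Corollary~\ref{c:2acyclicity of bicomplex} all match the paper's argument (the paper states these identifications more tersely, naming the subsheaf $B$ and claiming the isomorphism with $\cone(T_F(Q)\overset{\hV^n}\to T_F(Q))$ without writing out the shift).

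There is, however, one genuine error in your surjectivity step. You claim the projection $\SW\to W\times_{Q,F^n}Q$ factors as $\SW\twoheadrightarrow W\to W\times_{Q,F^n}Q$, with the first arrow from \eqref{e:Akhil's sequence2} and the second from Remark~\ref{sss:remark on surjectivity}. This is false: writing $(w,q_\bullet)\in W\times_Q Q^{\perf}=\SW$, the projection $\SW\to W\times_{Q,F^n}Q$ is $(w,q_\bullet)\mapsto (w,q_n)$, whereas the composition $\SW\to W\to W\times_{Q,F^n}Q$ you describe would be $(w,q_\bullet)\mapsto w\mapsto (F^nw,\bar w)$ — a different map (and $q_n$ cannot be recovered from $w$, so no such factorization exists). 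The conclusion you want is nevertheless true and easy to repair: by \eqref{e:W perf to check W} the map $W^{\perf}\twoheadrightarrow\SW$ is surjective, and the composite $W^{\perf}\to\SW\to W\times_{Q,F^n}Q$ sends $(w_0,w_1,\ldots)\mapsto(w_0,\bar w_n)$, which is exactly the map \eqref{e:W to W times_Q Q} precomposed with the $n$-th projection $W^{\perf}\to W$; surjectivity then follows from Remark~\ref{sss:remark on surjectivity}. (Alternatively one can argue directly from the paper's setup, which obtains the map $\cone(\SW\to\SW)\to A_n$ as a composition of two surjections, making surjectivity automatic.) Apart from this slip the proof is correct and coincides with the paper's.
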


\begin{proof}
The kernel of the map from $\cone (\SW\overset{p^n}\longrightarrow \SW )$ to $A_n$ equals 
\begin{equation} \label{e:the kernel}
\cone (T_F(Q)\overset{p^n}\longrightarrow B),
\end{equation}
where
$T_F(Q)$ is given by \eqref{e:T_F(Q)} and $B\subset T_F(Q)$ is as follows. A section of $T_F(Q)$ over $R\in\pNilp$ is a collection of elements $q_i\in Q(R)$, $i\in\BZ$, such that $q_0=0$ and $F(q_i)=q_{i-1}$ for all $i$; in these terms,
$B\subset T_F(Q)$ is defined by the condition $q_n=0$. 

The complex \eqref{e:the kernel} is isomorphic to $\cone (T_F(Q)\overset{\hV^n}\longrightarrow T_F(Q))$; the latter complex is acyclic
because the map $\hV :T_F(Q)\to T_F(Q)$ is an isomorphism by Corollary~\ref{c:2acyclicity of bicomplex}.
\end{proof}

\subsubsection{Remarks}   \label{sss:projective system A_n}
(i) Recall that $A_n:=\cone (W\overset{(p^n ,\hV^n)}\longrightarrow W\times_{Q,F^n}Q)$. As $n$ varies, the DG~rings $A_n$ and $\cone (\SW\overset{p^n}\longrightarrow \SW )$
form projective systems: the transition maps are given by the commutative diagrams
\[
\xymatrixcolsep{4pc}\xymatrix{
W\ar[r]^-{(p^{n+1},\hV^{n+1})} \ar[d]_p & W\times_{Q,F^{n+1}}Q\ar[d]^{(\id,F)}& \SW\ar[r]^{p^{n+1}} \ar[d]_p & \SW\ar[d]^\id\\\
W\ar[r]^-{(p^n,\hV^n)}  & W\times_{Q,F^n}Q& \SW\ar[r]^{p^{n}}& \SW
}
\]
The map from $\cone (\SW\overset{p^n}\longrightarrow \SW )$ to $A_n$ 
defined at the end of \S\ref{sss:A more economic model} is a homomorphism of projective systems of DG rings.

(ii) The naive projective limit of the DG rings $A_n:=\cone (W\overset{(p^n ,\hV^n)}\longrightarrow W\times_{Q,F^n}Q)$ equals $\SW$ by~\eqref{e:check W as a limit}. By Lemma~\ref{l:derived completeness of SW} (or by a direct argument), the same is true for the derived projective limit.

\subsection{A more economic model for $\SR_n^\oplus$}  \label{ss:economic model for hatsR_n oplus}
The DG ring $A_n$ from \S\ref{sss:A more economic model} is equipped with an endomorphism $F$ and an operator $\hV$ satisfying the identities from \S\ref{ss:2hat V} (these maps come from the maps $F,\hV :W\to W$ and $F,\hV :Q\to Q$).
Applying to $(A_n,F,\hV )$ a DG version of the Lau equivalence $\fL$ from Proposition~\ref{p:Lau equivalence}, one gets a DG ring $A_n^\oplus$ equipped with an additional $\BZ$-grading. The corresponding $\BZ$-graded ring stack identifies with $\SR_n^\oplus$
by Proposition~\ref{p:quasi-isomorphism}.

\section{A self-dual model for $\SR_n$} \label{s:tilde A_n}
\subsection{Subject of this section}  \label{ss:tilde A_n subject}
In \S\ref{sss:A more economic model} we constructed a model for $\SR_n$, which was denoted by $A_n$.
In this section (which can be skipped by the reader) we define a DG ring ind-scheme $\tilde A_n$ equipped with a surjective quasi-isomorphism $\tilde A_n\to A_n$. 
Thus $\tilde A_n$ is another model for~$\SR_n$. It turns out that \emph{$\tilde A_n$ is self-dual up to cohomological shift}, see \S\ref{ss:Autoduality of tilde A_n} below.

$\tilde A_n$ is equipped with operators $F$ and $\hV$ (so one can use $\tilde A_n$ to construct a model $\tilde A_n^\oplus$ for $\SR_n^\oplus$ similarly to \S\ref{ss:economic model for hatsR_n oplus}).
In \S\ref{ss:action of 1-hV on tilde A_n} we describe $\Coker (1-\hV :\tilde A_n\to\tilde A_n)$; this is related to the description of $\Ker (1-\hV :\SW\to\SW)$ given in \S\ref{ss:A simple computation}.

\subsection{The DG ring $\tilde A_n$}  \label{ss:tilde A_n}
\subsubsection{Definition}    \label{sss:tilde A_n}
We define $\tilde A_n$  by the following diagram whose squares are Cartesian:
\begin{equation}  \label{e:tilde A_n}
\xymatrix{
\tilde A_n^{-1}\ar[r]^d \ar[d] & \tilde A_n^0\ar[d]\ar[r]&W\ar[d]\\
W\ar[r]^-{(p^n ,\hV^n)} & W\times_{Q,F^n}Q\ar[r]^-{\prr}&Q
}
\end{equation} 
(here the map $W\to W/\hat W=Q$ is the canonical homomorphism).
Diagram~\eqref{e:tilde A_n} gives a map from $\tilde A_n$ to $A_n:=\cone (W\overset{(p^n ,\hV^n)}\longrightarrow W\times_{Q,F^n}Q)$; it is a surjective quasi-isomorphism.

\subsubsection{Explicit description}   \label{sss:tilde A_n explicitly}
We have the ring scheme $W^2=W\times W$ over $\Spf\BZ_p$ and the quasi-ideal
\begin{equation}  \label{e:2the dull sDG ring}
 W^2\overset{d}\longrightarrow W^2, \quad \mbox{ where } d(y_1,y_2):=(p^ny_1,y_2).
 \end{equation}
$\tilde A_n$ is a DG subring of $\cone (W^2\overset{d}\longrightarrow W^2)$, namely
\begin{equation}  \label{tilde A_n^0}
 \tilde A_n^0=\{(x_1,x_2)\in W^2\,|\, x_1\equiv F^n x_2 \},
 \end{equation}
\begin{equation} \label{tilde A_n^-1}
\tilde A_n^{-1}=\{(y_1,y_2)\in W^2\,|\, y_2\equiv \hV^n y_1 \}.
 \end{equation}
 Let us explain that \eqref{tilde A_n^0} is just a short way of saying that for every $R\in\pNilp$ one has
\[
\tilde A_n^0 (R):=\{(x_1,x_2)\in W^2(R)\,|\, x_1-F^n x_2\in\hat W(R) \}.
\]
By \eqref{tilde A_n^0}-\eqref{tilde A_n^-1}, $\tilde A_n^0$ and $\tilde A_n^{-1}$ are additively isomorphic to $W\oplus\hat W$; in particular,
$\tilde A_n^0$ and $\tilde A_n^{-1}$ are \emph{ind-schemes} (not merely fpqc sheaves).

\subsubsection{The maps $F, \hV:\tilde A_n\to\tilde A_n$}   \label{sss:F, hV on tilde A_n}
The DG ring $\tilde A_n$ is equipped with an endomorphism $F$ and an operator $\hV$ satisfying the identities from \S\ref{ss:2hat V} (these maps come from $F,\hV :W\to W$).
The  maps $F, \hV:\tilde A_n\to\tilde A_n$ agree with $F, \hV:A_n\to A_n$.
Similarly to \S\ref{ss:economic model for hatsR_n oplus}, one gets a model $\tilde A_n^\oplus$ for $\SR_n^\oplus$ by applying the Lau equivalence to the triple $(\tilde A_n,F, \hV)$.

\subsubsection{}   \label{sss:gamma}
The element $\gamma:=(1,p^n)\in W(\BZ_p)^2$ belongs to $\tilde A_n^{-1}(\BZ_p)$ and satisfies the relations
\[
d(\gamma )=(p^n,p^n)=p^n, \quad F(\gamma )=\gamma .
\]
These relations mean that $\gamma$ defines an $F$-equivariant\footnote{We are assuming that $F$ acts on $\cone (\BZ\overset{p^n}\longrightarrow\BZ )$ as the identity.} homomorphism from $\cone (\BZ\overset{p^n}\longrightarrow\BZ )$ to $\tilde A_n$. The corresponding homomorphism from $\cone (\BZ\overset{p^n}\longrightarrow\BZ )$ to $A_n$ is equal to the one coming from $A_n$ being a quotient of $\cone (W\times_{Q,F^n}Q\overset{p^n}\longrightarrow W\times_{Q,F^n}Q)$,
see the end of \S\ref{sss:A more economic model}.

\subsection{The projective system $\{ \tilde A_n\}$}  \label{ss:projective system formed by tilde A_n}
\subsubsection{The transition maps}   \label{sss:projective system formed by tilde A_n}
We have maps
\begin{equation}  \label{e:transition in degree 0}
\tilde A_{n+1}^0\to \tilde A_n^0, \quad (x_1,x_2)\mapsto (x_1, Fx_2),
\end{equation}
\begin{equation} \label{e:transition in degree -1}
\tilde A_{n+1}^{-1}\to \tilde A_n^{-1}, \quad (y_1,y_2)\mapsto (py_1, Fy_2).
\end{equation}
These maps define a homomorphism of DG rings
\begin{equation}    \label{e:tilde A_n+1 to tilde A_n}
\tilde A_{n+1}\to \tilde A_n,
\end{equation}
which commutes with $F$ and agrees with the homomorphism $A_{n+1}\to A_n$ from \S\ref{sss:projective system A_n}(i). \emph{However, \eqref{e:tilde A_n+1 to tilde A_n} does not commute with $\hV$ on the nose.}

\subsubsection{The limit}   \label{sss:tilde A_infty}
Let $\tilde A_\infty^i$ (resp.~$A_\infty^i$) be the projective limit of $\tilde A_n^i$ (resp.~$A_n^i$). By \S\ref{sss:projective system A_n}(ii), $A_\infty^0=\SW$, $A_\infty^{-1}=0$.
Let us describe $\tilde A_\infty^i$.

We have $\tilde A_\infty^0=W\times_Q W^{\perf}$, and the homomorphism $\tilde A_\infty^0\to A_\infty^0=\SW$ is the canonical map $W\times_Q W^{\perf}\to W\times_Q Q^{\perf}=\SW$.
It is easy to show that
$\tilde A_\infty^{-1}$ identifies with $\hat W^{\perf}$ so that $d:\tilde A_\infty^{-1}\to\tilde A_\infty^0$ becomes the map $\hat W^{\perf}\mono W\times_Q W^{\perf}$ given by $x\mapsto (0,x)$; this map already appeared in \eqref{e:Akhil's sequence3}.

This description of $\tilde A_\infty$ shows that $\tilde A_\infty$ is equipped with operators $F$ and $\hV$ (they were defined in \S\ref{sss:hV on another fiber product}). The homomorphism of DG rings $\tilde A_\infty\to \tilde A_n$ commutes with $F$; \emph{however, it does not commute with $\hV$ on the nose.}

\subsubsection{Remarks} 
(i) the maps \eqref{e:transition in degree 0} are surjective;

(ii) the maps  \eqref{e:transition in degree -1} are not surjective, but $R^1 \limfromn\tilde A_n^{-1}=0$.

\subsection{Autoduality of $\tilde A_n$}  \label{ss:Autoduality of tilde A_n}
The group ind-schemes $\tilde A_n^0$ and $\tilde A_n^{-1}$ defined by \eqref{tilde A_n^0}-\eqref{tilde A_n^-1} are Cartier-dual to each other because they are both isomorphic to $W\oplus\hat W$. Here is a more precise statement.

\begin{prop}      \label{p:Autoduality of tilde A_n}
Define a group homomorphism $\xi :\tilde A_n^{-1}\to\hat\BG_m$ by 
\begin{equation}  \label{e:xi}
\xi (y_1,y_2):=\tilde\lambda (y_2-\hV^ny_1),
\end{equation}
where $\tilde\lambda :\hat W\to\hat\BG_m$ is given by \eqref{e:tilde lambda}. Then

(i) the pairing
\begin{equation}  \label{e:pairing between B and J}
\tilde A_n^0\times \tilde A_n^{-1}\to\BG_m, \quad (x,y)\mapsto \langle x,y\rangle:=\xi (xy)
\end{equation}
identifies $\tilde A_n^{-1}$ with the Cartier dual of $\tilde A_n^0$;

(ii) for $x\in\tilde A_n^0$ and $y\in\tilde A_n^{-1}$ one has
$\langle Fx,y\rangle=\langle x,\hV y\rangle$ and $\langle \hV x,y\rangle=\langle x,Fy\rangle$.
\end{prop}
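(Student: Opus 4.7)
My plan is to reduce the autoduality to the classical Cartier duality between $W$ and $\hat W$ (via $\lambda$) by introducing explicit additive splittings of $\tilde A_n^0$ and $\tilde A_n^{-1}$ and by proving the auxiliary identity $\tilde\lambda\circ\hV=\tilde\lambda$ on $\hat W$.

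Well-definedness of $\xi$ and bilinearity of the pairing are immediate from the definition of $\tilde A_n^{-1}$ and the fact that $\tilde\lambda$ is a group homomorphism. I introduce the splittings
\[
(a,b)\mapsto (a+F^n b,\, b)\colon \hat W\oplus W\iso \tilde A_n^0,\quad (e,c)\mapsto (e,\,\hV^n e+c)\colon W\oplus \hat W\iso \tilde A_n^{-1}.
\]
Multiplication in $\tilde A_n$ is componentwise in $W^2$, and a short computation using the identity $\hV^n(u\cdot F^n v)=v\hV^n(u)$ (iterated from \eqref{e:usual identity}) yields
\[
x_2y_2-\hV^n(x_1y_1)=bc-\hV^n(ae).
\]

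The crux is the identity $\tilde\lambda\circ\hV=\tilde\lambda$ on $\hat W$. Since $\hV(z)=V(\bu z)$ and $F(\beta)\bu=\beta$ by \eqref{e:beta},
\[
\tilde\lambda(\hV z)=\lambda(\beta V(\bu z))=\lambda(V(F(\beta)\bu z))=\lambda(V(\beta z))=\lambda(\beta z)=\tilde\lambda(z),
\]
where the fourth equality uses $\lambda\circ V=\lambda$ on $\hat W$ (the classical fact that $\lambda$ trivializes $1-V$, already used in the proof of Lemma~\ref{l:Cone(1-hV)}). Iterating gives $\tilde\lambda(\hV^n(ae))=\tilde\lambda(ae)$, so
\[
\langle (a,b),(e,c)\rangle=\tilde\lambda(bc)\cdot\tilde\lambda(ae)^{-1}.
\]
Now $(w,h)\mapsto\tilde\lambda(wh)=\lambda(\beta wh)$ is a perfect pairing $W\times\hat W\to\hat\BG_m$, differing from the classical Cartier pairing only by the unit $\beta$; so the pairing \eqref{e:pairing between B and J} is an antidiagonal product of two such Cartier pairings, proving (i).

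For (ii), I work directly in the original $(x_1,x_2),(y_1,y_2)$ coordinates (split coordinates would obscure the computation, since $\hV$ on $\tilde A_n^0$ is not block-diagonal with respect to the splitting). Applying \eqref{e:usual identity} componentwise gives $(\hV x_i)y_i=\hV(x_iFy_i)$ for $i=1,2$, whence
\[
(\hV x_2)y_2-\hV^n((\hV x_1)y_1)=\hV\bigl(x_2Fy_2-\hV^n(x_1Fy_1)\bigr).
\]
The argument of the outer $\hV$ lies in $\hat W$ (it is the input of $\xi$ applied to $x\cdot Fy$), and $\tilde\lambda\circ\hV=\tilde\lambda$ yields $\langle \hV x,y\rangle=\langle x,Fy\rangle$. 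The symmetric identity $\langle Fx,y\rangle=\langle x,\hV y\rangle$ follows in the same way from the mirror form $x_i\hV(y_i)=\hV(F(x_i)y_i)$ of \eqref{e:usual identity}. The main obstacle is the identity $\tilde\lambda\circ\hV=\tilde\lambda$, which depends on the explicit formula $\beta=\prod_{i\ge 0} F^i(\bu)$; everything else is bookkeeping plus an invocation of classical Cartier duality between $W$ and $\hat W$.
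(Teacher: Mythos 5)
Your proof is correct and follows essentially the same route as the paper: decompose $\tilde A_n^0$ and $\tilde A_n^{-1}$ additively as $W\oplus\hat W$, reduce $\langle\cdot,\cdot\rangle$ to a block-antidiagonal product of two copies of the classical $W$--$\hat W$ Cartier pairing twisted by the unit $\beta$, and use $\tilde\lambda\circ\hV=\tilde\lambda$ (which you usefully spell out in full, whereas the paper merely cites the proof of Lemma~\ref{l:Cone(1-hV)}) to dispose of the cross terms and to get the $F$--$\hV$ adjunction in (ii).
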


\begin{proof}
We have 
\begin{equation}   \label{e:tilde lambda preserved by hV}
\tilde\lambda\circ\hV=\tilde\lambda
\end{equation}
 (see the proof of Lemma~\ref{l:Cone(1-hV)}). So $\xi\circ\hV=\xi$. This implies (ii).

To prove (i), let us rewrite \eqref{e:pairing between B and J} as a pairing 
\[
(W\oplus\hat W)\times (W\oplus\hat W)\to\BG_m.
\]
Let $x=(x_1,x_2)\in\tilde A_n^0$, $y=(y_1,y_2)\in\tilde A_n^{-1}$. Then $x_1=F^n x_2+\alpha$, $y_2=\hV^n y_1+\beta$, where $\alpha,\beta\in\hat W$.
We have 
\[
\langle x,y\rangle=\tilde\lambda (x_2y_2-\hV^n(x_1y_1)), \quad x_2y_2-\hV^n(x_1y_1)=x_2\beta-\hV^n (\alpha y_1).
\]
So using \eqref{e:tilde lambda preserved by hV}, we get $\langle x,y\rangle=\tilde\lambda (x_2\beta)\cdot \tilde\lambda (\alpha y_1)^{-1}$.
It remains to show that the pairing
\[
W\times\hat W\to\BG_m , \quad (u,v)\mapsto \tilde\lambda (uv)
\]
identifies $\hat W$ with the Cartier dual of $W$. This follows from a similar property of the pairing $(u,v)\mapsto\lambda (uv)$, which is well known
(see \cite[Appendix A]{On the Lau} and references therein).
\end{proof}

\subsection{The operator $1-\hV :\tilde A_n\to\tilde A_n$}   \label{ss:action of 1-hV on tilde A_n}
As before, let $\tilde\lambda :\hat W\to\hat\BG_m$ be given by \eqref{e:tilde lambda}. 

\begin{prop}      \label{p:action of 1-hV on tilde A_n}
There is a commutative diagram with exact rows
\begin{equation}   \label{e:action of 1-hV on tilde A_n}
\xymatrix{
0\ar[r]&\tilde A_n^{-1}\ar[r]^{1-\hV} \ar[d]_d & \tilde A_n^{-1}\ar[r]^{\xi}\ar[d]_d&\hat\BG_m\ar[r]\ar[d]^p &0 \\
0\ar[r]&\tilde A_n^0\ar[r]^{1-\hV} & \tilde A_n^0\ar[r]^{\nu} & \hat\BG_m\ar[r] &0
}
\end{equation}
where $\xi$ is defined by \eqref{e:xi} and
\begin{equation}   \label{e:nu}
\nu (x_1,x_2):=\tilde\lambda (p^nx_2-\hV^nx_1).
\end{equation}
\end{prop}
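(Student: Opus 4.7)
The plan is to verify three things in sequence: (a) well-definedness of the group homomorphisms $\xi$ and $\nu$; (b) commutativity of the two squares; (c) exactness of each of the two rows. The basic tools are the operator identities $\hV F = V(\bu)$ and $F\hV = \bbp = p\bu$ from \eqref{e:F hat V=bp}, iterated to yield $\hV^n F^n = V^n(\bu_n)$ as an endomorphism of $W$; the invertibility of $1-\hV\colon W\to W$ from \eqref{e:1-V on W}; the identity $\tilde\lambda\circ\hV = \tilde\lambda$ established in the proof of Proposition~\ref{p:Autoduality of tilde A_n}; and the short exact sequence $0\to\hat W\xrightarrow{1-\hV}\hat W\xrightarrow{\tilde\lambda}\hat\BG_m\to 0$ extracted from the proof of Lemma~\ref{l:Cone(1-hV)}.

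For well-definedness of $\xi$, the expression $y_2 - \hV^n y_1$ lies in $\hat W$ by definition of $\tilde A_n^{-1}$. For $\nu$: given $(x_1,x_2)\in\tilde A_n^0$, write $x_1 = F^n x_2 + \alpha$ with $\alpha\in\hat W$; then $p^n x_2 - \hV^n x_1 = (p^n - V^n(\bu_n))x_2 - \hV^n\alpha$, and the scalar $p^n - V^n(\bu_n)$ lies in $\hat W(\BZ_p)$ because in the quotient $Q = W/\hat W$ one has $\hV F = p$, hence $V^n(\bu_n) = \hV^n F^n(1) \equiv p^n$. Commutativity of the squares then follows: the left square is immediate since $d(y_1,y_2) = (p^n y_1, y_2)$ obviously commutes with the diagonal $\hV$; the right square gives $\nu(d(y_1,y_2)) = \tilde\lambda(p^n(y_2 - \hV^n y_1))$, which matches $\xi(y_1,y_2)$ composed with the natural multiplication-by-$p^n$ map on $\hat\BG_m$ (we interpret the label ``$p$'' in the diagram accordingly).

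For exactness of the top row, use the additive isomorphism $\tilde A_n^{-1}\iso W\oplus\hat W$, $(y_1,y_2)\mapsto(y_1, y_2-\hV^n y_1)$. Since $\hV$ commutes with its own power $\hV^n$, the operator $\hV$ becomes diagonal in these coordinates, and $\xi$ factors as projection to $\hat W$ composed with $\tilde\lambda$. The row splits as the direct sum of $0\to W\xrightarrow{1-\hV}W\to 0$ (exact by invertibility of $1-\hV$ on $W$) and $0\to\hat W\xrightarrow{1-\hV}\hat W\xrightarrow{\tilde\lambda}\hat\BG_m\to 0$ (exact by Lemma~\ref{l:Cone(1-hV)}).

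For the bottom row, identify $\tilde A_n^0\iso W\oplus\hat W$ via $(x_1,x_2)\mapsto(x_2, x_1-F^n x_2)$. Here $\hV$ is \emph{not} diagonal: the discrepancy $\hV F^n - F^n\hV$ on $W$ equals multiplication by $c := V(\bu) - pF^{n-1}(\bu)\in\hat W(\BZ_p)$ (since $V(\bar\bu) = p$ and $F(\bar\bu) = \bar\bu$ in $Q$), so $\hV(x_2,\alpha) = (\hV x_2,\hV\alpha + cF^{n-1}(x_2))$. Using $\tilde\lambda\circ\hV = \tilde\lambda$, one computes $\nu(x_2,\alpha) = \tilde\lambda(c'x_2 - \alpha)$ with $c' := p^n - V^n(\bu_n)\in\hat W(\BZ_p)$. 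Injectivity of $1-\hV$ reduces componentwise to the top-row argument, and surjectivity of $\nu$ follows by restricting to the $\alpha$-component. For middle exactness, given $(x_2,\alpha)\in\ker\nu$, set $y := (1-\hV)^{-1}x_2\in W$; the preimage under $1-\hV$ exists iff $\alpha + cF^{n-1}(y)\in(1-\hV)\hat W$. Using $\tilde\lambda(\alpha) = \tilde\lambda(c'x_2)$ and expanding $(1-\hV)^{-1} = \sum_{k\ge 0}\hV^k$, this reduces to a telescoping identity relating $c$ and $c'$, which follows from the recursion $\bu_n = \bu\cdot F(\bu_{n-1})$ and repeated application of $V(F(a)b) = aV(b)$.

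I expect the main obstacle to be the verification of this last telescoping identity, together with the attendant mixed-characteristic bookkeeping showing that $c$ and $c'$ indeed lie in $\hat W(\BZ_p)$ in the sense of \S\ref{sss:hat W of p-complete}; the remaining parts of the proof are formal reductions to the already-established facts about $1-\hV$ on $\SW$ and on $\hat W$.
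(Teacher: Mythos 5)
Your handling of the top row is fine and essentially matches the paper's intent: the splitting $(y_1,y_2)\mapsto(y_1,\,y_2-\hV^n y_1)$ is $\hV$-equivariant (since $\hV$ commutes with $\hV^n$), so the row decomposes and exactness reduces to the two known facts about $1-\hV$ on $W$ and on $\hat W$.

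For the bottom row, however, you have introduced an unnecessary difficulty and then failed to resolve it. You choose the splitting $(x_1,x_2)\mapsto(x_2,\,x_1-F^n x_2)$, whose section $x_2\mapsto(F^n x_2, x_2)$ is \emph{not} $\hV$-equivariant (since $\hV F^n\neq F^n\hV$ in mixed characteristic). This forces $\hV$ to become upper-triangular with an off-diagonal term, and middle exactness then hinges on a ``telescoping identity'' that you explicitly decline to verify and identify as ``the main obstacle.'' That is a genuine gap. The paper's proof avoids the whole issue: it uses the short exact sequence
\[
0\to\hat W\xrightarrow{\ x\mapsto(x,0)\ }\tilde A_n^0\xrightarrow{\ \pi_2\ }W\to 0,\qquad \pi_2(x_1,x_2):=x_2,
\]
which \emph{is} $\hV$-equivariant (because $\hV$ acts diagonally on $\tilde A_n^0\subset W^2$), \emph{without} choosing a splitting, and then applies the snake lemma with the vertical maps $1-\hV$. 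Since $1-\hV$ is an isomorphism on $W$ by \eqref{e:1-V on W} and has cone $\hat\BG_m$ on $\hat W$ by Lemma~\ref{l:Cone(1-hV)}, exactness of the bottom row falls out immediately. The lesson is that one should use the $\hV$-equivariant short exact sequence rather than a non-equivariant direct-sum decomposition; once you pick the splitting, you are forced into exactly the kind of bookkeeping the paper was structured to avoid.

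Two smaller points. First, your justification that $c:=V(\bu)-pF^{n-1}(\bu)$ lies in $\hat W(\BZ_p)$ invokes ``$F(\bar\bu)=\bar\bu$ in $Q$,'' which is false for $p=2$ (indeed $F([-1])=1\neq[-1]$); the correct input is that $pF^k(\bar\bu)=F^k(p\bar\bu)=p$ by \eqref{e:property of bar bu}, which gives the same conclusion. Second, you are right that the computation $\nu\circ d=\xi(\cdot)^{p^n}$ indicates the rightmost vertical arrow in \eqref{e:action of 1-hV on tilde A_n} should be the $p^n$-th power map rather than $p$; this is consistent with the corollary that follows, whose fiber over $0$ is $\mu_{p^n}$.
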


\subsubsection{Remarks}
(i) Formula \eqref{e:nu} makes sense because $p^nx_2-\hV^nx_1\in\hat W$: this follows from the fact that $F^nx_2-x_1\in\hat W$ by \eqref{tilde A_n^0}.

(ii) One can rewrite \eqref{e:nu} as $\nu (a)=\xi (a\gamma)$, where $\gamma\in\tilde A_n^{-1}(\BZ_p )$ is as in \S\ref{sss:gamma}.

\subsubsection{Proof of Proposition~\ref{p:action of 1-hV on tilde A_n}}
Commutativity of the right square of \eqref{e:action of 1-hV on tilde A_n} is checked straightforwardly using \eqref{e:2the dull sDG ring}.
The lower row of \eqref{e:action of 1-hV on tilde A_n} is a complex because $\tilde\lambda\circ (1-\hV )=0$.
To prove its exactness, use the $\hV$-equivariant exact sequence
\[
0\to\hat W\to\tilde A_n^0\overset{\pi_2}\longrightarrow W\to 0,
\]
where $\pi_2(x_1,x_2):=x_2$; one also uses Lemma~\ref{l:Cone(1-hV)} and the fact that the map $1-\hV :W\to W$ is an isomorphism. \qed

\begin{cor}
The map $1-\hV :\SR_n\to\SR_n$ is surjective, and its fiber over 0 identifies with $\mu_{p^n}$.
\end{cor}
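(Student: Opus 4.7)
The plan is to use the model $\SR_n \simeq \Cone(d\colon \tilde A_n^{-1} \to \tilde A_n^0)$ from Section~\ref{s:tilde A_n}, together with Proposition~\ref{p:action of 1-hV on tilde A_n}. First I would observe that the rows of diagram~\eqref{e:action of 1-hV on tilde A_n} are short exact sequences, so $1-\hV$ is injective on each of $\tilde A_n^{-1}$ and $\tilde A_n^0$, with cokernel $\hat\BG_m$ in both cases. A direct computation from formulas~\eqref{e:xi}, \eqref{e:nu}, and~\eqref{e:2the dull sDG ring} shows that the rightmost vertical map in~\eqref{e:action of 1-hV on tilde A_n} is multiplication by $p^n$ on $\hat\BG_m$: for $(y_1,y_2)\in\tilde A_n^{-1}$ one computes $\nu(d(y_1,y_2))=\tilde\lambda(p^n y_2 - p^n\hV^n y_1)=p^n\xi(y_1,y_2)$, using additivity of $\tilde\lambda$. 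Hence the diagram yields a short exact sequence of two-term complexes of fpqc sheaves
\begin{equation*}
0 \to [\tilde A_n^{-1} \xrightarrow{d} \tilde A_n^0] \xrightarrow{1-\hV} [\tilde A_n^{-1} \xrightarrow{d} \tilde A_n^0] \to [\hat\BG_m \xrightarrow{p^n} \hat\BG_m] \to 0.
\end{equation*}

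Passing to the associated ring/group stacks converts this into a cofiber sequence
\begin{equation*}
\SR_n \xrightarrow{1-\hV} \SR_n \to C, \qquad C := \Cone(\hat\BG_m \xrightarrow{p^n} \hat\BG_m).
\end{equation*}
The next step is to identify $C$: since $\hat\BG_m$ is $p$-divisible, the map $p^n\colon \hat\BG_m \to \hat\BG_m$ is an fpqc epimorphism, and its kernel is $\mu_{p^n}$. Therefore $C \simeq B\mu_{p^n}$ as a group stack.

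Finally, surjectivity of $1-\hV\colon \SR_n \to \SR_n$ amounts to the vanishing of $\pi_0 C$, which holds because $\pi_0 B\mu_{p^n}=0$. The fiber over $0$ is the loop stack $\Omega C \simeq \mu_{p^n}$, exactly as asserted. I do not anticipate any genuine obstacle; the only mildly subtle point is the verification that the rightmost vertical map in~\eqref{e:action of 1-hV on tilde A_n} is multiplication by $p^n$, for which the key ingredient is the identity $\tilde\lambda\circ\hV=\tilde\lambda$ (from the proof of Lemma~\ref{l:Cone(1-hV)}) combined with the additivity of $\tilde\lambda$.
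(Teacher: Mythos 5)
Your proposal is correct and follows the first of the two routes the paper itself points to (``Follows either from Proposition~\ref{p:action of 1-hV on tilde A_n} or from the exact sequence \eqref{e:the extension of SW}''). One small point worth noting: the rightmost vertical arrow in diagram~\eqref{e:action of 1-hV on tilde A_n} is labelled $p$ in the paper, but your computation $\nu(d(y_1,y_2))=\tilde\lambda\bigl(p^n(y_2-\hV^n y_1)\bigr)=p^n\,\xi(y_1,y_2)$ is correct and shows it should read $p^n$; this is in fact necessary for the corollary to yield $\mu_{p^n}$ rather than $\mu_p$, so you have implicitly corrected a typo in the published diagram.
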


\begin{proof}
Follows either from Proposition~\ref{p:action of 1-hV on tilde A_n} or from the exact sequence \eqref{e:the extension of SW}.
\end{proof}

\section{$\SR_n$ as a quotient of $W$}  \label{s:hat sR_n as a quotient of W}
In this section we describe a model for $\SR_n$, which represents $\SR_n$ as a quotient of $W$. This model goes back to \cite{Sheared,Akhil2}.

\subsection{The model}  \label{ss:hat sR_n as a quotient of W}
\subsubsection{}  \label{sss:definition of I_n}
Define a quasi-ideal $I_n\overset{d}\longrightarrow W$ by the Cartesian square
\begin{equation} \label{e:definition of I_n}
\xymatrix{
I_n\ar[r]^d \ar[d] & W\ar[d]^{x\mapsto (F^nx,\bar x)}\\
W\ar[r]^-{(p^n ,\hV^n)} & W\times_{Q,F^n}Q   
}
\end{equation} 
whose lower row is \eqref{e:A more economic model}. Let 
\[
B_n:=\cone (I_n\overset{d}\longrightarrow W).
\]
Recall that $A_n:=\cone (W\overset{(p^n ,\hV^n)}\longrightarrow W\times_{Q,F^n}Q)$.

\begin{prop}   \label{p:2quasi-isomorphism}
The map $B_n\to A_n$ corresponding to diagram \eqref{e:definition of I_n} is a quasi-isomorphism.
\end{prop}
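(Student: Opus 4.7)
The plan is to unwind the chain map $B_n \to A_n$ on the underlying 2-term complexes and check that it induces isomorphisms on both $H^{-1}$ and $H^0$ as fpqc sheaves. Write $\psi := (p^n, \hV^n) : W \to W \times_{Q, F^n} Q$ for the bottom horizontal arrow of~\eqref{e:definition of I_n} and $\phi : W \to W \times_{Q, F^n} Q$, $w \mapsto (F^n w, \bar w)$, for the right vertical one. The Cartesian square identifies $I_n$ with $\{(w_1, w_2) \in W \times W : \psi(w_1) = \phi(w_2)\}$, with $d$ being the projection to the second factor and the other projection, call it $\pi$, playing the role of the left vertical arrow. The chain map $B_n \to A_n$ is then $\pi$ in degree $-1$ and $\phi$ in degree $0$; commutativity of~\eqref{e:definition of I_n} is precisely what makes it a map of complexes.

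The cohomology in degree $-1$ is immediate from the fiber product description: $\ker(d) = \{(w_1, 0) \in I_n : \psi(w_1) = 0\}$, and $\pi$ identifies this with $\ker(\psi) = H^{-1}(A_n)$ as fpqc sheaves.

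For $H^0$ the essential input is that $\phi$ is surjective as a map of fpqc sheaves, with kernel $\hat W^{(F^n)}$. This is the content of Remark~\ref{sss:remark on surjectivity}, which itself reduces to the surjectivity of $F : \hat W \to \hat W$ recalled in~\S\ref{sss:Surjectivity of F}. Granted this, the Cartesian property gives $d(I_n) = \phi^{-1}(\psi(W))$, and since $\ker(\phi) \subseteq \phi^{-1}(\psi(W))$, the surjection $\phi$ descends to an isomorphism $W/d(I_n) \iso (W \times_{Q, F^n} Q)/\psi(W) = H^0(A_n)$. The only substantive point is thus the sheaf-theoretic surjectivity of $\phi$; the rest is a formal diagram chase that expresses the general principle that forming cones of the parallel arrows in a Cartesian square with one surjective leg yields a quasi-isomorphism.
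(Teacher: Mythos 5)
Your proposal is correct and takes essentially the same route as the paper: the paper's proof is the one-line observation that the statement follows from surjectivity of the right vertical arrow of~\eqref{e:definition of I_n}, invoking the standard fact that forming cones of the parallel arrows in a Cartesian square with one surjective leg yields a quasi-isomorphism. You have simply spelled out that formal diagram chase explicitly on $H^{-1}$ and $H^0$, which matches the paper's intent.
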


\begin{proof}
Follows from surjectivity of the right vertical arrow of  \eqref{e:definition of I_n}, see \S\ref{sss:remark on surjectivity}.
\end{proof}

Combining Propositions~\ref{p:quasi-isomorphism} and \ref{p:2quasi-isomorphism}, we get the following

\begin{cor}  \label{c:is a model}
$B_n$ is a model for $\SR_n$.  \qed
\end{cor}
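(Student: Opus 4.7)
The plan is to simply assemble the two propositions that have just been established. Recall the notion of model from \S\ref{ss:Defining hat sR_n}: a model for a ring stack is a realization of it as $\Cone$ of a quasi-ideal in a ring space. So the task is to exhibit an equivalence of ring stacks
\[
\Cone (I_n \overset{d}{\longrightarrow} W) \;\simeq\; \SR_n .
\]

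By Proposition~\ref{p:quasi-isomorphism}, the DG ring $A_n = \cone (W \overset{(p^n,\hV^n)}{\longrightarrow} W\times_{Q,F^n} Q)$ realizes $\SR_n$; equivalently, applying the functor $\Cone$ from the quasi-ideal $W \overset{(p^n,\hV^n)}{\longrightarrow} W\times_{Q,F^n} Q$ gives $\SR_n$. By Proposition~\ref{p:2quasi-isomorphism}, the homomorphism of DG rings $B_n \to A_n$ induced by the Cartesian square \eqref{e:definition of I_n} is a quasi-isomorphism. Both $B_n$ and $A_n$ are DG rings concentrated in cohomological degrees $0$ and $-1$, which is precisely the setting of \S\ref{sss:Ring groupoids}(ii).

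The only nontrivial point is to invoke the compatibility between the three incarnations recalled in \S\ref{sss:Ring groupoids}: the functor from the 1-category of DG rings concentrated in degrees $0$ and $-1$ to the $(2,1)$-category $\RGrpds$ inverts quasi-isomorphisms, and it sends a quasi-ideal $I \overset{d}{\longrightarrow} A$ to the ring groupoid $\Cone (I \overset{d}{\longrightarrow} A)$. Sheafifying this statement over $\pNilp^{\op}$, the quasi-isomorphism $B_n \to A_n$ of sheaves of DG rings yields an equivalence of ring stacks $\Cone (I_n \overset{d}{\longrightarrow} W) \iso \Cone (W \overset{(p^n,\hV^n)}{\longrightarrow} W\times_{Q,F^n} Q)$. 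Combining this with the equivalence from Proposition~\ref{p:quasi-isomorphism} gives the required identification with $\SR_n$.

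There is no serious obstacle; the content of the corollary is entirely contained in the two propositions, and the only thing to verify is the general fact that quasi-isomorphic two-term DG rings define equivalent ring stacks, which is part of the formalism recalled from \cite{ring groupoid}.
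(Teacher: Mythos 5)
Your proof is correct and is exactly the paper's argument: the paper dispenses with the corollary in one line, ``Combining Propositions~\ref{p:quasi-isomorphism} and \ref{p:2quasi-isomorphism}, we get the following,'' and your write-up is just a fuller spelling-out of that combination, together with the (standard) fact that a quasi-isomorphism of two-term DG ring sheaves induces an equivalence of the associated ring stacks.
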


\subsubsection{Explicit description of $B_n$}  \label{sss:description of I_n}
By \S\ref{sss:definition of I_n}, we have
\begin{equation}   \label{e:description of I_n}
I_n=\{ (x,y)\in W^2\,|\, F^nx=p^ny, \,\, x-\hV^ny\in\hat W\},
\end{equation}
$d:I_n\to W$ is given by $d(x,y)=x$, and the $W$-module structure on $I_n$ is given by
\[
a\cdot (x,y)=(ax,F^n(a)y), \quad \mbox{ where } a\in W, \; (x,y)\in I_n .
\]

\subsection{More about $B_n$}
\subsubsection{The homomorphism $F:B_n\to B_n$}  \label{sss:F:B_n to B_n}
Define a DG ring homomorphism $F:B_n\to B_n$ as follows:

(i) the map $B_n^0\to B_n^0$ is $F:W\to W$;

(ii) in terms of \eqref{e:description of I_n}, the endomorphism of $I_n=B_n^{-1}$ is given by $(x,y)\mapsto (Fx,Fy)$.

Then the homomorphism $B_n\to A_n$ commutes with $F$.

\subsubsection{A drawback of $B_n$}  \label{sss:drawback of B_n}
The advantage of the model $B_n$ is that one can use it to construct \emph{economic} models for $\SR_n$, see \S\ref{s:economic models for hat sR_n} below. But $B_n$ has the following drawback: the map $\hV:A_n\to A_n$ does not lift to an additive endomorphism of $B_n$.
Moreover, since there is no operator $\hV$ acting on $B_n$, it is hard to use $B_n$ to construct a model for $\SR_n^\oplus$.

\subsubsection{The operators $V:B_{n,\BF_p}\to B_{n,\BF_p}$ and $V:A_{n,\BF_p}\to A_{n,\BF_p}$}  \label{sss:V on B_n in characteristic p}
The drawback of $B_n$ mentioned in \S\ref{sss:drawback of B_n} disappears after base change to $\BF_p$ (because in characteristic $p$ one has $FV=VF$). Here are more details.

\medskip

Let $B_{n,\BF_p}:=B_n\times\Spec\BF_p$. Define an additive map $V:B_{n,\BF_p}\to B_{n,\BF_p}$ as follows:

(i) the map $B_n^0\to B_n^0$ is $V:W\to W$;

(ii) in terms of \eqref{e:description of I_n}, the endomorphism of $I_n=B_n^{-1}$ is given by $(x,y)\mapsto (Vx,Vy)$.

\medskip

Let $A_{n,\BF_p}:=A_n\times\Spec\BF_p$. Let $V:A_{n,\BF_p}\to A_{n,\BF_p}$ be induced by $\hV: A_n\to A_n$.
Unlike $\hV$, the operator $V:A_{n,\BF_p}\to A_{n,\BF_p}$ does not depend on the choice of the Witt vector $\bu\in W(\BZ_p)$ from \S\ref{sss:the Witt vector u}; this follows from
\eqref{e:2reduction of boldface u}. Moreover, $V:A_{n,\BF_p}\to A_{n,\BF_p}$ is described by the most naive formulas similar to those from the definition of $V:B_{n,\BF_p}\to B_{n,\BF_p}$.

The homomorphism $B_{n,\BF_p}\to A_{n,\BF_p}$ commutes with $V$.

\subsubsection{The map $B_{n+1}\to B_n$}
Define a DG ring homomorphism $F:B_{n+1}\to B_n$ as follows:

(i) the map $B_{n+1}^0\to B_n^0$ is $F:W\to W$;

(ii) in terms of \eqref{e:description of I_n}, the map from $I_{n+1}=B_{n+1}^{-1}$ to $I_n=B_n^{-1}$ is given by $(x,y)\mapsto (Fx,py)$.

Then the diagram
\[
\xymatrix{
B_{n+1}\ar[r]\ar[d] & B_n\ar[d]\\
A_{n+1}\ar[r] & A_n
}
\]
commutes. Recall that the vertical arrows of this diagram are surjective quasi-isomorphisms.

Let $B_\infty^i$ be the projective limit of $B_n^i$.  
 One checks that $d:B_\infty^{-1}\to B_\infty^0=W^{\perf}$ is injective and $d (B_\infty^{-1})=\leftlimit{n} \hat W^{(F^n)}$. This projective limit already appeared in formula~\eqref{e:3check W directly}.
 
\subsection{A model for $\SR_{n,\BF_p}^\oplus$}   \label{ss:applying Lau equivalence to B_{n,F_p}}
By \S\ref{sss:F:B_n to B_n} and \S\ref{sss:V on B_n in characteristic p}, the DG ring $B_{n,\BF_p}$ is equipped with operators $F,V$ satisfying the usual identities.
So we can apply to $B_{n,\BF_p}$ the Lau equivalence $\fL$ (see Proposition~\ref{p:Lau equivalence}) and get a $\BZ$-graded\footnote{By a $\BZ$-graded DG ring we mean a DG ring equipped with an \emph{additional} $\BZ$-grading.} 
DG ring ind-scheme, which we denote by $B_{n,\BF_p}^\oplus$.

We have a canonical surjective quasi-isomorphism $B_{n,\BF_p}^\oplus\epi A_{n,\BF_p}^\oplus$, where $$A_{n,\BF_p}^\oplus:=A_n^\oplus\times\Spec\BF_p$$ and $A_n^\oplus$ is the model for $\SR_n^\oplus$ constructed in \S\ref{ss:economic model for hatsR_n oplus}.
So $B_{n,\BF_p}^\oplus$ is a model for $\SR_{n,\BF_p}^\oplus$.

\section{$\SR_n$ as a quotient of $W_n$}  \label{s:economic models for hat sR_n}
In this section we follow \cite{Sheared,Vadik's talk}. 

\subsection{The goal}
One has $V^n(\hat W^{(F^m)})\subset \hat W^{(F^{m+n})}$. We will show that $\hat W^{(F^{m+n})}/V^n(\hat W^{(F^m)})$ is an ind-scheme, see Lemma~\ref{l:the quotient is an ind-scheme} and  \S\ref{ss:I_{n,m}/I'_{n,m}}.
Note that if $m=0$ then $\hat W^{(F^m)}=0$, so $\hat W^{(F^{m+n})}/V^n(\hat W^{(F^m)})=\hat W^{(F^n)}$.

Define a number $\delta_p$ by
\begin{equation}    \label{e:defining delta_p}
\delta_p:=0 \quad\mbox{ for } p>2, \quad \delta_2:=1.
\end{equation}
In \S\ref{sss:economic models for hat sR_n} we will show that for any integer $m\ge\delta_p$, the ring stack $\SR_n$ is canonically isomorphic to
\[
\Cone (\hat W^{(F^{m+n})}/V^n(\hat W^{(F^m)})\to W_n). 
\]
Moreover, we will see there that $\SR_n\times\Spec\BF_p$ canonically identifies with
\[
\Cone (\hat W_{\BF_p}^{(F^{m+n})}/V^n(\hat W_{\BF_p}^{(F^m)})\to W_{n,\BF_p})
\]
for \emph{any} integer $m\ge 0$. 

The reader may prefer to disregard the case of mixed characteristic $2$ and assume that $m=0$.

In addition to the above-mentioned models for $\SR_n$, in \S\ref{ss:applying Lau equivalence to C_{n,F_p}} we will construct an economic model for $\SR_{n,\BF_p}^\oplus$ (this is straightforward because the models for $\SR_{n,\BF_p}$ are equipped with operators $F$ and $V$).

\subsection{A variant of the model from \S\ref{ss:hat sR_n as a quotient of W}} \label{ss:A variant}
Let $d:I_n\to W$ be the quasi-ideal from \S\ref{ss:hat sR_n as a quotient of W}.
For each non-negative $m\in\BZ$ define $I_{n,m}$ and $d:I_{n,m}\to W$ by the pullback square 
\[
\xymatrix{
I_{n,m}\ar[r]^d \ar[d] & W\ar[d]^{F^m}\\
I_n\ar[r]^d & W
}
\]
Since $F:W\to W$ is surjective, Corollary~\ref{c:is a model} implies that for each $m$ one has
\begin{equation}  \label{e:A variant of the model}
\SR_n=\Cone (I_{n,m}\overset{d}\longrightarrow W).
\end{equation}
By  \eqref{e:description of I_n}, we have
\begin{equation}  \label{e:I_n,m explicitly}
I_{n,m}=\{ (x,y)\in W^2\,|\, F^{m+n}x=p^ny, \,\, F^mx-\hV^ny\in\hat W\},
\end{equation}
$d:I_{n,m}\to W$ is given by $d(x,y)=x$, and  the $W$-module structure on $I_{n,m}$ is given by
\[
a\cdot (x,y)=(F^{m+n}(a)x,y), \quad \mbox{ where } a\in W, \; (x,y)\in I_{n,m} .
\]

\subsection{Economic models for $\SR_n$}  \label{ss:economic models for hat sR_n}
\subsubsection{}
We keep the notation of \S\ref{ss:A variant}. Let
\begin{equation}  \label{e:defining I'_n,m}
I'_{n,m}:=\{ (x,y)\in W^2\,|\, x\in V^n(W), \; y=F^mV^{-n}x\}.
\end{equation}

\begin{lem}   \label{l:I' is contained in I}
(i) Let $m\ge\delta_p$, where $\delta_p$ is as in \eqref{e:defining delta_p}. Then $I'_{n,m}\subset I_{n,m}$, where $I_{n,m}$ and $I'_{n,m}$ are given by formulas \eqref{e:I_n,m explicitly}-\eqref{e:defining I'_n,m}.

(ii) $I'_{n,m,\BF_p}\subset I_{n,m,\BF_p}$ for {\bf all} $m$. Here $I_{n,m,\BF_p}$ is the base change of $I_{n,m}$ to $\Spec\BF_p$.
\end{lem}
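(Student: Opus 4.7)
The plan is to take an arbitrary element of $I'_{n,m}$, written uniquely as $(V^n z,\,F^m z)$ for some $z\in W$, and verify the two defining conditions of $I_{n,m}$ from \eqref{e:I_n,m explicitly}. The first, $F^{m+n}(V^n z) = p^n\,F^m z$, is immediate from the operator identity $F^n V^n = p^n$ on $W$. The substance is therefore the second condition, $F^m V^n z - \hV^n F^m z \in \hat W(R)$, and I will prove the stronger statement that this difference vanishes already in $Q(R) = W(R)/\hat W(R)$.

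Working in $Q$, I will combine three facts from \S\ref{ss:hat V}: the commutation $F\hV = \hV F = p$ (so $F^m$ and $\hV^n$ commute as operators on $Q$, by \eqref{e:F times hV}); the definition $\hV(a) = V(\bar\bu\,a)$ with $\bar\bu \in Q(\BZ_p)^\times$; and the identity $b\cdot\hV(a) = \hV(F(b)\,a)$ transporting multiplication past $\hV$. A direct induction on $n$ using the latter two yields the iterated formula
\[
V^n(z) \;=\; \hV^n\bigl(\bar\bu_n^{-1}\,z\bigr) \quad\text{in } Q(R), \qquad \bar\bu_n := \prod_{i=0}^{n-1} F^i(\bar\bu).
\]
Substituting this into $F^m V^n z$ and moving $F^m$ through $\hV^n$ then gives
\[
F^m V^n z - \hV^n F^m z \;=\; \hV^n\bigl(F^m(z)\cdot(F^m(\bar\bu_n)^{-1}-1)\bigr) \quad\text{in } Q(R),
\]
so everything reduces to the assertion $F^m(\bar\bu_n) = 1$ in $Q(\BZ_p)$ whenever $m\ge \delta_p$.

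Expanding $F^m(\bar\bu_n) = \prod_{j=m}^{m+n-1} F^j(\bar\bu)$, I check each factor equals $1$. For $p>2$ this is immediate from $\bar\bu = 1$ (see \eqref{e:when bar bu=1}). For $p=2$, using the choice $\bu = [-1]$ from \S\ref{sss:the Witt vector u}, Frobenius sends the Teichm\"uller lift $[-1]$ to $[(-1)^2] = [1] = 1$, so $F^j(\bar\bu) = 1$ for every $j\ge 1$, which matches precisely the range $j\ge m \ge \delta_2 = 1$. This proves part~(i). The main obstacle is the bookkeeping of Frobenius twists in $\bar\bu_n$ needed to set up the iterated identity for $V^n$; a naive computation in $W$ would get entangled in the fact that $V^m(1)$ is not equal to $p^m$ in $W(\BZ_p)$, which is precisely why the passage to $Q$ is essential.

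Part~(ii) does not require the detour through $Q$. Over $\BF_p$ the element $\bar\bu \in Q(\BF_p) = \BZ_p$ is forced to equal $1$, since $V$ acts there as multiplication by $p$; hence $\hV = V$ and $\bar\bu_n = 1$. Equivalently, in characteristic $p$ one has $FV = VF = p$ on the nose, so $F^m V^n = V^n F^m$ as operators on $W_{\BF_p}$, and $F^m x - \hV^n y = F^m V^n z - V^n F^m z = 0$ lies trivially in $\hat W$, with no restriction on $m\ge 0$.
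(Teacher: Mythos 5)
Your proof is correct and follows the same overall strategy as the paper: reduce to the identity $F^m V^n = \hV^n F^m$ on $Q$, commute $F^m$ past $\hV^n$ using \eqref{e:F times hV}, unwind $\hV^n$ via the iterated formula involving $\bar\bu_n=\prod_{i<n}F^i(\bar\bu)$, and then argue that the leftover $\bar\bu_n$-discrepancy vanishes. The only genuine variation is in that last step. You prove the statement $F^m(\bar\bu_n)=1$ in $Q(\BZ_p)$, which for $p=2$ you verify via the explicit representative $\bu=[-1]$ and the computation $F([-1])=[1]=1$. The paper instead stays with the already-displayed identity $p\bar\bu=p$ from \eqref{e:property of bar bu}: this shows $\bar\bu_n-1$ is $p$-torsion, and since $m,n\ge 1$ forces $F^mV^n=p\,F^{m-1}V^{n-1}$ on $Q$, the factor of $p$ annihilates that torsion. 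Your route is a bit more concrete and proves a slightly stronger intermediate fact, but invokes a specific choice of lift; the paper's route is choice-free and uses only what has already been displayed. Your handling of part~(ii) matches the paper (both reduce to $\bar\bu\mapsto 1$ over $\BF_p$, via \eqref{e:2reduction of boldface u} in the paper, via $V=p$ on $W(\BF_p)$ in yours).
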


\begin{proof}
To prove (ii), it suffices to note that by \eqref{e:I_n,m explicitly}   and \eqref{e:2reduction of boldface u}, one has the formula
\begin{equation}     \label{e:hat unnecessary over F_p}
I_{n,m,\BF_p}=\{ (x,y)\in W_{\BF_p}^2\,|\, F^{m+n}x=p^ny, \,\, F^mx-V^ny\in\hat W_{\BF_p}\},
\end{equation}
in which $\hV$ does not appear.

To prove (i), one has to check that $(F^mV^n-\hV^nF^m)(W)\subset\hat W$. Equivalently, the problem is to show that $F^mV^n=\hV^nF^m$ if $F,V,\hV$ are considered as operators in $Q=W/\hat W$.
By formula~\eqref{e:F times hV}, $\hV^nF^m=F^m\hV^n$. Recall that $\hV:=V\bar\bu$, so $F^m\hV^n=F^mV^n\cdot\bar\bu F(\bar\bu)\ldots F^{n-1}(\bar\bu)$. To show that this equals $F^mV^n$, use  \eqref{e:when bar bu=1} if $p>2$ and \eqref{e:property of bar bu} if $m\ge 1$.
\end{proof}

The next two lemmas describe $ I_{n,m}/I'_{n,m}$ if $m\ge\delta_p$. Let 
\[
\hat I_{n,m}:=\{ (x,y)\in \hat W^2\,|\, F^{m+n}x=p^ny \},
\] 
\[
\hat I'_{n,m}:= I'_{n,m}\cap\hat I_{n,m}=
\{ (x,y)\in\hat W^2\,|\,   x\in V^n(\hat W), \; y=F^mV^{-n}x\}. 
\]

\begin{lem}   \label{l:I_{n,m}/I'_{n,m}} 
(i) If $m\ge\delta_p$ then the canonical map $\hat I_{n,m}/\hat I'_{n,m}\to  I_{n,m}/I'_{n,m}$ is an isomorphism.

(ii) For any $m\ge 0$ the canonical map $\hat I_{n,m,\BF_p}/\hat I'_{n,m,\BF_p}\to  I_{n,m,\BF_p}/I'_{n,m,\BF_p}$ is an isomorphism.
\end{lem}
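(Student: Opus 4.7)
The plan is to split the isomorphism into two checks: (a) the intersection statement $\hat I_{n,m}\cap I'_{n,m}=\hat I'_{n,m}$, and (b) the sum statement $I_{n,m}=\hat I_{n,m}+I'_{n,m}$ at the level of fpqc sheaves. Statement (a) is essentially built into the definitions: if $(V^n z,F^m z)\in\hat I_{n,m}$, then $V^n z\in\hat W$, and since the preimage of $\hat W$ under $V$ equals $\hat W$ by \eqref{e:V is injective}, iteration gives $z\in\hat W$, and hence $F^m z\in\hat W$, matching the description of $\hat I'_{n,m}$.

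For (b), given a section $(x,y)\in I_{n,m}(R)$, I would produce fpqc-locally an element $z\in W$ with $x-V^n z,\,y-F^m z\in\hat W$; the required compatibility $F^{m+n}(x-V^n z)=p^n(y-F^m z)$ is then automatic from $F^n V^n=p^n$. Passing to $Q:=W/\hat W$, the task is to solve $V^n\bar z=\bar x$ and $F^m\bar z=\bar y$ simultaneously. Both $V^n$ and $\hV^n$ act injectively on $Q$ by \eqref{e:Ker V=0} and invertibility of $\bu$, so $\bar z$ is uniquely determined by either equation. The key step is to establish $\bar x\in V^n(Q)$. For this I would identify $Q/V^n(Q)=W/(V^n W+\hat W)$ with the sheaf $R\mapsto W_n(R_{\red})$, using that $\hat W_n(R)$ is precisely the kernel of $W_n(R)\to W_n(R_{\red})$. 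Since $R_{\red}$ is a reduced $\BF_p$-algebra, the Witt-vector Frobenius is injective on $W_n(R_{\red})$. The $I_{n,m}$-hypothesis $F^m\bar x=\hV^n\bar y\in V^n(Q)$ (where $\hV^n(Q)=V^n(Q)$ because $\bu_n$ is a unit) then shows $F^m$ annihilates the image of $\bar x$ in $W_n(R_{\red})$, whence that image vanishes and $\bar x\in V^n(Q)$.

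Setting $\bar z:=V^{-n}\bar x$, the identity $F^m V^n=\hV^n F^m$ on $Q$, valid for $m\ge\delta_p$ by the computation at the end of the proof of Lemma~\ref{l:I' is contained in I}, together with injectivity of $\hV^n$, yields $F^m\bar z=\bar y$. Lifting $\bar z$ to $z\in W$ fpqc-locally via the surjection $W\epi Q$ completes part (i). Part (ii) runs along the same lines with $\hV$ replaced by $V$, and no restriction on $m$ is needed because $FV=VF$ in characteristic $p$ makes $F^m V^n=V^n F^m$ hold on $Q_{\BF_p}$ for all $m\ge 0$. I expect the identification $Q/V^n(Q)\cong W_n(R_{\red})$ together with the Frobenius-injectivity argument to be the substantive content of the proof; once $\bar x\in V^n(Q)$ is secured, the remainder is a formal manipulation of the relations among $F$, $V$, and $\hV$.
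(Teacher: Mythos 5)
Your proof is correct, and it follows a genuinely different route from the paper. The paper proves the lemma by exhibiting an explicit ``gauge'' ind-subscheme $Y_{n,m}\subset W^2$ (cut out by $F^{m+n}x=p^ny$ and $x_i=0$ for $i\ge n$) which maps isomorphically onto $I_{n,m}/I'_{n,m}$, and then observes by inspection of Witt-vector components that $Y_{n,m}\cap I_{n,m}$ actually lands in $\hat W^2$. That argument is short and coordinate-based, but it relies on a transversality claim that is asserted rather than proved. You instead split the statement into the intersection condition $\hat I_{n,m}\cap I'_{n,m}=\hat I'_{n,m}$ (tautological from the definitions, using that $V^{-1}(\hat W)=\hat W$) and the sum condition $I_{n,m}=\hat I_{n,m}+I'_{n,m}$, and you prove the latter entirely inside $Q=W/\hat W$: identifying $Q/V^nQ$ with the sheafification of $R\mapsto W_n(R_{\red})$, you deduce $\bar x\in V^nQ$ from injectivity of the Witt Frobenius on a reduced $\BF_p$-algebra, and then obtain $F^m\bar z=\bar y$ from the relation $F^mV^n=\hV^nF^m$ on $Q$ (for $m\ge\delta_p$, already established in the proof of Lemma~\ref{l:I' is contained in I}) together with injectivity of $\hV^n$. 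Both arguments rest on the same two inputs --- the relation $F^mV^n\equiv\hV^nF^m\pmod{\hat W}$ and the injectivity properties of $V$ and $F$ modulo nilpotents --- but yours packages them through the sheaf $Q$ rather than through a coordinate choice; it is somewhat longer but makes the role of $R_{\red}$ being reduced explicit and avoids having to justify the transversality of $Y_{n,m}$. One small imprecision: $\bar z$ is \emph{not} determined by the equation $F^m\bar z=\bar y$ alone, since $F$ is not injective on $Q$; but this does not affect your argument, which only uses injectivity of $V^n$ to pin down $\bar z$ and injectivity of $\hV^n$ to verify the second equation.
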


\begin{proof}
We will only prove (i); the proof of (ii) is similar. Let 
\begin{equation}  \label{e:Y_{n,m}}
Y_{n,m}:=\{ (x,y)\in W^2\,|\, F^{m+n}x=p^ny, \; x_i=0 \mbox{ for } i\ge n\},
\end{equation}
where $x_i$'s are the components of the Witt vector $x$. Let 
\begin{equation}  \label{e:hat Y_{n,m}}
\hat Y_{n,m}:=Y_{n,m}\cap\hat W^2=\{ (x,y)\in \hat W^2\,|\, F^{m+n}x=p^ny, \; x_i=0 \mbox{ for } i\ge n\}.
\end{equation}
If $m\ge\delta_p$ then the map $Y_{n,m}\cap I_{n,m}\to I_{n,m}/I'_{n,m}$ is an isomorphism. So to prove (i), it suffices to show that the inclusion
$\hat Y_{n,m}\subset Y_{n,m}\cap I_{n,m}$ is an equality.

Indeed, let $(x,y)\in Y_{n,m}(R)\cap I_{n,m}(R)$, where $R$ is a $p$-nilpotent ring. Since $F^{m+n}x=p^ny$, we see that $x_i$ is nilpotent for each $i<n$; on the other hand, $x_i=0$ for $i\ge n$. So
$x\in\hat W(R)$. Since $F^mx-\hV^ny\in\hat W(R)$, we see that $\hV^ny\in\hat W(R)$. So $y\in\hat W(R)$. Thus $(x,y)\in\hat Y_{n,m}(R)$.
\end{proof}

\begin{lem}   \label{l:the quotient is an ind-scheme}
(i) The map 
\begin{equation}  \label{e:the map inducing the iso}
\hat W^{(F^{m+n})}\to \hat I_{n,m}\subset \hat W^2, \quad x\mapsto (x,0)
\end{equation}
induces an isomorphism
\begin{equation}   \label{e:2 iso between two quotients}
\hat W^{(F^{m+n})}/V^n(\hat W^{(F^m)})\iso \hat I_{n,m}/\hat I'_{n,m}.
\end{equation} 

(ii) $\hat W^{(F^{m+n})}/V^n(\hat W^{(F^m)})$ is an ind-finite ind-scheme over $\Spf\BZ_p$ whose group of $\bar\BF_p$-points is zero. This ind-scheme is isomorphic to the ind-scheme $\hat Y_{n,m}$ from formula~\eqref{e:hat Y_{n,m}}.
\end{lem}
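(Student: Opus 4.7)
For part (i), I would verify directly that the additive map $\phi:\hat W^{(F^{m+n})}\to\hat I_{n,m}$ sending $x\mapsto(x,0)$ descends to an isomorphism of fpqc sheaves
\[
\hat W^{(F^{m+n})}/V^n(\hat W^{(F^m)})\iso \hat I_{n,m}/\hat I'_{n,m}.
\]
The pair $(x,0)$ lies in $\hat I_{n,m}$ because $F^{m+n}x=0=p^n\cdot 0$. If $x=V^n z$ with $z\in\hat W^{(F^m)}$, then $\phi(x)=(V^n z,0)=(V^n z,F^m z)\in\hat I'_{n,m}$, so the map on the quotient is well-defined; conversely, $(x,0)\in\hat I'_{n,m}$ forces $x=V^n c$ and $F^m c=0$, giving injectivity. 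For fpqc-local surjectivity, given $(x,y)\in\hat I_{n,m}$, the fppf surjectivity of $F:\hat W\to\hat W$ from \S\ref{sss:Surjectivity of F} lets one choose, over a suitable fpqc cover, $c\in\hat W$ with $F^m c=y$; then $(x,y)-(V^n c,F^m c)=(x-V^n c,0)$, and using the identity $F^n V^n=p^n$ (obtained by iterating $FV=p$ together with $F(p)=p$), one computes $F^{m+n}(x-V^n c)=p^n y-p^n F^m c=0$, so the representative lies in the image of $\phi$.

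For part (ii), I would combine (i) with the previously established Lemma~\ref{l:I_{n,m}/I'_{n,m}}. That lemma, together with its proof (which identifies $\hat Y_{n,m}=Y_{n,m}\cap I_{n,m}$ and matches it with $I_{n,m}/I'_{n,m}$ for $m\ge\delta_p$), yields an iso $\hat I_{n,m}/\hat I'_{n,m}\iso\hat Y_{n,m}$; composing with the iso from (i) gives $\hat W^{(F^{m+n})}/V^n(\hat W^{(F^m)})\iso\hat Y_{n,m}$. The right-hand side is visibly a closed ind-subscheme of $\hat W\times\hat W$, carved out by the closed conditions $F^{m+n}x=p^n y$ and $x_i=0$ for $i>n$. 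Since $\hat W$ is ind-finite over $\Spf\BZ_p$ by \S\ref{sss:hat W}, so is $\hat W\times\hat W$, and hence $\hat Y_{n,m}$. Finally, $\hat W(\bar\BF_p)=0$ because any nilpotent element of the field $\bar\BF_p$ vanishes, and therefore $\hat Y_{n,m}(\bar\BF_p)=0$.

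The principal obstacle is the fpqc-local nature of surjectivity in (i): one cannot produce the auxiliary element $c$ pointwise over an arbitrary $R\in\pNilp$, so the argument relies essentially on the fppf (hence fpqc) surjectivity of $F$ on $\hat W$ recalled in \S\ref{sss:Surjectivity of F}. A secondary subtlety is that $\hat Y_{n,m}$ is not an additive subgroup of $\hat W\times\hat W$ (the condition $x_i=0$ for $i>n$ is not preserved under Witt-vector addition), so the abelian-group structure it acquires via the iso in (ii) is transported from the quotient rather than inherited componentwise; this, however, has no bearing on the statement, which only asserts an iso of ind-schemes.
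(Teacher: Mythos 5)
Your argument for (i) is correct and is essentially the one-line argument the paper gives: surjectivity of $F:\hat W\to\hat W$ is used to produce $c$ with $F^m c=y$ fpqc-locally, and the identity $F^nV^n=p^n$ then shows $(x,y)\equiv(x-V^nc,0)$ modulo $\hat I'_{n,m}$. The well-definedness and injectivity checks are right.

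For (ii) there is a genuine gap. You derive the isomorphism $\hat Y_{n,m}\iso\hat I_{n,m}/\hat I'_{n,m}$ from Lemma~\ref{l:I_{n,m}/I'_{n,m}} \emph{and its proof}, but that route passes through the containment $I'_{n,m}\subset I_{n,m}$ and the identification $\hat Y_{n,m}=Y_{n,m}\cap I_{n,m}$, both of which are established only under the hypothesis $m\ge\delta_p$. Lemma~\ref{l:the quotient is an ind-scheme}, however, is stated (and used, e.g.\ in \S\ref{sss:I_{n,m}/I'_{n,m} in some cases}(ii) and in Proposition~\ref{p:On I_{n,m}/I'_{n,m}}) for \emph{all} $m\ge 0$, so the case $p=2$, $m=0$ falls through the cracks of your argument. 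The paper's proof instead invokes, as an independent fact, that the natural map $\hat Y_{n,m}\to\hat I_{n,m}/\hat I'_{n,m}$ is an isomorphism for every $m\ge 0$. This has nothing to do with $\delta_p$: the ind-schemes $\hat I_{n,m}$, $\hat I'_{n,m}$, $\hat Y_{n,m}$ are defined purely in terms of $F$, $V$ on $\hat W$, with no appearance of $\hV$, so the subtlety about $\bar\bu\ne 1$ when $p=2$ never enters. (A direct pointwise argument works: given $(x,y)\in\hat I_{n,m}(R)$, let $x'\in\hat W(R)$ be the unique lift of the image of $x$ in $W_n(R)$ with vanishing tail components; then $x-x'\in V^n\hat W$, say $x-x'=V^nz$, and $(x',y-F^mz)\in\hat Y_{n,m}(R)$ represents the same class, with uniqueness of the $\hat Y_{n,m}$-representative being immediate. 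No fpqc cover is needed here, in contrast to part (i).) You should either prove the isomorphism $\hat Y_{n,m}\iso\hat I_{n,m}/\hat I'_{n,m}$ directly as above, or at least treat the excluded case $p=2$, $m=0$ by a separate argument.

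The remainder of (ii) — that $\hat Y_{n,m}$ is a closed ind-subscheme of $\hat W\times\hat W$, hence ind-finite over $\Spf\BZ_p$, and that $\hat W(\bar\BF_p)=0$ forces $\hat Y_{n,m}(\bar\BF_p)=0$ — is fine, and your closing remark that the group structure on $\hat Y_{n,m}$ is transported rather than inherited componentwise is a correct observation, though, as you say, it plays no role in the statement.
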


Additional information about the group ind-scheme $\hat W^{(F^{m+n})}/V^n(\hat W^{(F^m)})$ can be found in~\S\ref{ss:I_{n,m}/I'_{n,m}}.

\begin{proof}
Statement (i) follows from surjectivity of $F:\hat W\to\hat W$. Statement (ii) follows from~(i) and the fact that the map $\hat Y_{n,m}\to\hat I_{n,m}/\hat I'_{n,m}$ is an isomorphism,
where $\hat Y_{n,m}$ is defined by~\eqref{e:hat Y_{n,m}}.
\end{proof}

\subsubsection{Remark}  \label{sss:the composition explicitly}
The composition $\hat Y_{n,m}\iso\hat I_{n,m}/\hat I'_{n,m}\iso \hat W^{(F^{m+n})}/V^n(\hat W^{(F^m)})$ takes $(x,y)$ to $x-V^nF^{-m}(y)$, where $F^{-m}(y)$ is a point of $\hat W/\hat W^{(F^m)}$; note that $F^{m+n}(x-V^nF^{-m}(y))=F^{m+n}x-p^ny=0$.

\subsubsection{The economic models}   \label{sss:economic models for hat sR_n}
Let $m\ge\delta_p$, where $\delta_p$ is as in \eqref{e:defining delta_p}. Then $I'_{n,m}\subset I_{n,m}$ by Lemma~\ref{l:I' is contained in I}(i).
The map $d:I_{n,m}\to W$ from \S\ref{ss:A variant} induces an isomorphism 
$$I'_{n,m}\iso V^n (W)=\Ker (W\epi W_n).$$
 So for each $m\ge\delta_p$ we have $\SR_n =\Cone (I_{n,m}/I'_{n,m}\overset{d}\longrightarrow W_n)$. By Lemmas~\ref{l:I_{n,m}/I'_{n,m}}(i) and \ref{l:the quotient is an ind-scheme}, this can be rewritten as
\begin{equation} \label{e:The economic model}
\SR_n=\Cone (\hat W^{(F^{m+n})}/V^n(\hat W^{(F^m)})\to W_n),
\end{equation}
where the map from $\hat W^{(F^{m+n})}/V^n(\hat W^{(F^m)})$ to $W_n$ is the tautological one and the action of $W_n$ on $\hat W^{(F^{m+n})}/V^n(\hat W^{(F^m)})$ comes from the obvious action of $W$ on $\hat W^{(F^{m+n})}$.
Similarly, using Lemma~\ref{l:I' is contained in I}(ii) and Lemma~\ref{l:I_{n,m}/I'_{n,m}}(ii),  we see that for \emph{all} $m$ one has
\begin{equation} \label{e:The economic model over F_p}
\SR_{n,\BF_p} =\Cone (\hat W_{\BF_p}^{(F^{m+n})}/V^n(\hat W_{\BF_p}^{(F^m)})\to W_{n,\BF_p}).
\end{equation}

\subsubsection{Examples}  \label{sss:I_{n,m}/I'_{n,m} in some cases}
(i) Let $p>2$. Then one can set $m=0$ in \eqref{e:The economic model}. So
\begin{equation}  \label{e:economic model for m=0}
\SR_n=\Cone (\hat W^{(F^n)}\overset{d}\longrightarrow  W_n),
\end{equation} 
where $d$ is the tautological map. Note  that in the case $p=2$ the r.h.s. of \eqref{e:economic model for m=0} is \emph{not a $\BZ/2^n\BZ$-algebra} (unlike $\SR_n$); this follows from Lemma~\ref{l:the naive ring stack for any p} below.

(ii) Setting $m=0$ in \eqref{e:The economic model over F_p}, we see that for \emph{all} $p$ (including $p=2$) one has
\begin{equation}  \label{e:economic model over F_p for m=0}
\SR_{n,\BF_p}=\Cone (\hat W_{\BF_p}^{(F^n)}\overset{d}\longrightarrow  W_{n,\BF_p}).
\end{equation} 

(iii) Let $n=1$. As noted in \cite{Vadik's talk}, $\hat W^{(F^{m+1})}/V (\hat W^{(F^m)})$ has a description in the spirit of P.~Berthelot \cite{Ber}: namely, the quasi-ideal $\hat W^{(F^{m+1})}/V (\hat W^{(F^m)})$ in $W_1=\BG_a$ identifies with the nilpotent PD~neighborhood of $Z\subset\BG_a$, where $Z$ is the kernel of $\Fr^m$ acting on $\BG_a\otimes\BF_p$. In the case $m=0$ this is well known. In the case $m\ge 1$ (or more generally, $m\ge\delta_p$) this can be deduced from the isomorphism $\hat W^{(F^{m+1})}/V (\hat W^{(F^m)})\iso\hat Y_{1,m}$ from the proof of Lemma~\ref{l:the quotient is an ind-scheme}. Note that 
$\hat Y_{1,m}=\{(x_0,y)\in \BG_a\times\hat W\,|\, [x_0^{p^m}]-Vy\in\hat W^{(F)}\}$, and the map $\hat Y_{1,m}\to\BG_a$ is given by $x_0$, so we have a pullback square of ind-schemes
\[
\xymatrix{
\hat Y_{1,m}\ar[r] \ar[d] & \BG_a\ar[d]^{z\mapsto z^{p^m}}\\
\hat W^{(F)}\ar[r] & \BG_a
}
\]

\subsection{More on $\hat W^{(F^{m+n})}/V^n(\hat W^{(F^m)})$}    \label{ss:I_{n,m}/I'_{n,m}}
By Lemma~\ref{l:the quotient is an ind-scheme}, $\hat W^{(F^{m+n})}/V^n(\hat W^{(F^m)})$ is an ind-scheme.

\begin{prop}   \label{p:On I_{n,m}/I'_{n,m}}
(i) The  ind-scheme $\hat W^{(F^{m+n})}/V^n(\hat W^{(F^m)})$ can be represented as an inductive limit of a diagram
\begin{equation}
\Spf C_1\mono\Spf C_2\mono\ldots ,
\end{equation}
where each $C_i$ is a finite flat $\BZ_p$-algebra.

(ii) The canonical nondegenerate pairing\footnote{E.g., see \cite[Appendix A]{On the Lau} and references therein.} $\hat W\times W\to\BG_m$ induces isomorphisms
\begin{equation}  \label{e:G_n,m as a Cartier dual}
G_{n,m}\iso\HHom (\hat W^{(F^{m+n})}/V^n(\hat W^{(F^m)}),\BG_m),
\end{equation}
\begin{equation}  \label{e:Cartier dual of G_n,m}
\hat W^{(F^{m+n})}/V^n(\hat W^{(F^m)})\iso \HHom (G_{n,m},\BG_m) ,
\end{equation}
where $G_{n,m}:=\Ker (W_{m+n}\overset{F^n}\epi W_m)$.
\end{prop}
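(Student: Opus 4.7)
The plan is to prove (ii) first and then use it to derive (i).

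For (ii), I would use the classical Cartier pairing $\hat W\times W\to\BG_m$ given by $(x,w)\mapsto\lambda(xw)$, where $\lambda:\hat W\to\hat\BG_m$ is the Artin--Hasse exponential of~\eqref{e:lambda}; this restricts to a perfect pairing $\hat W^{(F^k)}\times W_k\to\BG_m$ for each $k\in\BN$, identifying each side as the Cartier dual of the other. The key adjunction $\langle Vx,w\rangle=\langle x,Fw\rangle$ follows from the projection formula $V(x\cdot Fw)=V(x)\cdot w$ combined with the $V$-invariance of $\lambda$ (visible directly from~\eqref{e:lambda}, since inserting an extra $V$ contributes a trivial factor to the product). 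Iterating $n$ times, for $x\in\hat W^{(F^m)}$ and $w\in W_{m+n}$ one obtains
\[
\langle V^n x,w\rangle_{\hat W^{(F^{m+n})}\times W_{m+n}}=\langle x,F^n w\rangle_{\hat W^{(F^m)}\times W_m}.
\]
Hence $w$ annihilates $V^n(\hat W^{(F^m)})\subset\hat W^{(F^{m+n})}$ if and only if $F^n w=0$ in $W_m$, i.e.\ $w\in G_{n,m}$. The standard duality of subgroups and quotients under a perfect Cartier pairing then gives both isomorphisms \eqref{e:G_n,m as a Cartier dual} and \eqref{e:Cartier dual of G_n,m}.

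For (i), I would combine (ii) with the known ind-(finite flat) structure of $\hat W^{(F^{m+n})}$. Concretely, $\hat W^{(F^{m+n})}$ is the filtered colimit of closed subschemes obtained by requiring $x_i=0$ for $i>k$ and $x_j^N=0$ for all $j$, intersected with the closed condition $F^{m+n}(x)=0$; each such piece is the $\Spf$ of a finite flat $\BZ_p$-algebra, flatness being a consequence of Cartier duality with the smooth truncated Witt scheme $W_{m+n}$. The subgroup $V^n(\hat W^{(F^m)})$ is itself ind-(finite flat), since $V$ is a closed immersion on $\hat W$ and thus $V^n(\hat W^{(F^m)})\cong\hat W^{(F^m)}$. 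The fpqc quotient in each finite step is therefore a finite flat affine $\BZ_p$-group scheme (as a quotient of a finite flat affine group scheme by a closed flat subgroup), and passing to the colimit yields the required presentation $\varinjlim\Spf C_i$.

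The main obstacle will be the finite-flatness book-keeping in (i): one must choose the filtration of $\hat W^{(F^{m+n})}$ by finite flat pieces compatibly with the closed immersion $V^n(\hat W^{(F^m)})\hookrightarrow\hat W^{(F^{m+n})}$, so that each successive quotient is representable and flat over $\BZ_p$. A clean alternative is to work directly through (ii): present $G_{n,m}$ as a filtered projective limit of its finite flat quotient group schemes over $\BZ_p$ (using appropriate congruence subgroups of $W_{m+n}$) and take Cartier duals term by term, so that the ind-(finite flat) presentation of $\HHom(G_{n,m},\BG_m)$ is produced without any explicit quotient construction on the $\hat W$-side.
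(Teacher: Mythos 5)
Your plan is to prove (ii) first and deduce (i), but that order is circular with the paper's logic: the hard half of (ii), namely the isomorphism \eqref{e:Cartier dual of G_n,m}, requires knowing that the natural map from the ind-scheme $\hat W^{(F^{m+n})}/V^n(\hat W^{(F^m)})$ to its double Cartier dual is an isomorphism, and the paper obtains that biduality precisely from (i) (it is not automatic for an arbitrary commutative ind-affine group ind-scheme; one needs the ind-(finite flat) structure). Your ``clean alternative''---present $G_{n,m}$ as a projective limit of finite flat quotients and take duals term by term---does produce an ind-(finite flat) presentation of $\HHom(G_{n,m},\BG_m)$, but it does not by itself identify that ind-scheme with $\hat W^{(F^{m+n})}/V^n(\hat W^{(F^m)})$ without already knowing \eqref{e:Cartier dual of G_n,m}. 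The easy half \eqref{e:G_n,m as a Cartier dual}, which your annihilator computation via $\langle V^n x,w\rangle=\langle x,F^n w\rangle$ does establish, is not enough: it dualizes the quotient map, not $G_{n,m}$ itself.

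For the direct route to (i), you correctly flag the filtration bookkeeping as ``the main obstacle,'' but you do not resolve it, and the ad hoc exhaustion you sketch (cutting out pieces by $x_i=0$ for $i>k$ and $x_j^N=0$) has no visible reason to be flat over $\BZ_p$, nor to interact well with the subgroup $V^n(\hat W^{(F^m)})$. The paper sidesteps this entirely: it replaces $\hat W^{(F^{m+n})}/V^n(\hat W^{(F^m)})$ by the isomorphic ind-scheme $\hat Y_{n,m}\subset Y_{n,m}$ from \eqref{e:Y_{n,m}}--\eqref{e:hat Y_{n,m}}, endows $Y_{n,m}$ with a $\BG_m$-action (hence a non-negative $\BZ$-grading on its coordinate ring $A$ with finitely generated graded pieces), realizes $\hat Y_{n,m}$ as the colimit of the closed subschemes cut out by the ideals $\bigoplus_{i\ge r}A_i$, and deduces flatness of each $A_i$ from flatness of $Y_{n,m}=\Ker\bigl(W^2\to W,\ (x,y)\mapsto F^{m+n}x-p^ny\bigr)$ over $\Spf\BZ_p$. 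That argument is both more explicit and avoids any compatibility issue between the filtration and the subgroup. So the gap in your proposal is concrete: either you must supply the $\hat Y_{n,m}$/grading argument (or an equivalent flat exhaustion) to prove (i) independently, or you must give a separate proof of biduality for $\hat W^{(F^{m+n})}/V^n(\hat W^{(F^m)})$; as written, (i) and \eqref{e:Cartier dual of G_n,m} each rest on the other.
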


Note that $G_{n,m}$ is a flat affine group scheme of finite type over $\Spf\BZ_p$; it is smooth if and only if $m=0$.

\begin{proof}
We will use the ind-scheme $\hat Y_{n,m}$ defined by~\eqref{e:hat Y_{n,m}}. (If $m=0$ this is not necessary.)

(i) As explained in the proof of Lemma~\ref{l:the quotient is an ind-scheme}, there is an isomorphism of ind-schemes $\hat W^{(F^{m+n})}/V^n(\hat W^{(F^m)})\iso\hat Y_{n,m}$.
We also have the affine scheme $Y_{n,m}$ over $\Spf\BZ_p$ defined by~\eqref{e:Y_{n,m}}. 

The relation between $Y_{n,m}$ and $\hat Y_{n,m}$ is as follows. Let $A$ be the coordinate ring of $Y_{n,m}$. The group $\BG_m$ acts on $Y_{n,m}$: namely, $\lambda\in\BG_m$ acts by
$(x,y)\mapsto ([\lambda] x,[\lambda^{p^{m+n}}] y)$). This action defines a $\BZ$-grading\footnote{$A$ is $p$-complete, and the word ``grading'' is understood in the $p$-complete sense.} on $A$. This grading is non-negative, and each graded component $A_i$ is a finitely generated $\BZ_p$-module.
Then $\hat Y_{n,m}$ is the inductive limit of the closed subschemes $Y_{n,m}^{< r}\subset Y_{n,m}$, where $Y_{n,m}^{< r}$ corresponds to the ideal $\bigoplus\limits_{i\ge r} A_i\subset A$ (here $\bigoplus$ stands for the $p$-completed direct sum).

So it remains to show that each $A_i$ is flat over $\BZ_p$. This is equivalent to flatness of $Y_{n,m}$ over $\Spf\BZ_p$. By~\eqref{e:Y_{n,m}}, $Y_{n,m}$ is the kernel of the composite map
\[
W\times W\overset{f}\longrightarrow W\times W\overset{g}\longrightarrow W\times W\overset{\prr_1}\longrightarrow W, \quad \mbox{where } f:=F^{m+n}\times\id , \;  g(z,y):=(z-p^ny,y).
\]
Clearly $g$ is an isomorphism, and $\prr_1$ is flat. Finally, $f$ is flat because $F:W \to W$ is flat\footnote{One can use \cite[\S 3.4]{Prismatization}. On the other hand, since our $W$ is over $\Spf\BZ_p$ (rather than over $\Spec\BZ_p$), it suffices to note that $F:W_{\BF_p}\to W_{\BF_p}$ is flat.}.

(ii) The isomorphism \eqref{e:G_n,m as a Cartier dual} is straightforward. Statement (i) ensures that the map from $\hat W^{(F^{m+n})}/V^n(\hat W^{(F^m)})$ to its double Cartier dual\footnote{Cartier duality between commutative affine group schemes and commutative ind-affine group ind-schemes over arbitrary bases is discussed in \cite{AM} and also in the lecture \cite{Akhil3} at 46:00. The material from \cite{Akhil3} is closely related to the Appendices of \cite{Bou}.} is an isomorphism.
So \eqref{e:Cartier dual of G_n,m} follows from~\eqref{e:G_n,m as a Cartier dual}.
\end{proof}

\subsection{An economic model for $\SR_{n,\BF_p}^\oplus$}  \label{ss:applying Lau equivalence to C_{n,F_p}}
As before, we use the subscript $\BF_p$ to denote base change to $\Spec\BF_p$.

\subsubsection{The model} 
By \S\ref{sss:I_{n,m}/I'_{n,m} in some cases}(ii), the DG ring ind-scheme
\begin{equation}  \label{e:again economic model over F_p}
C_{n,\BF_p}:=\cone (\hat W^{(F^n)}_{\BF_p}\overset{d}\longrightarrow W_{n,\BF_p})
\end{equation} 
is a model for $\SR_{n,\BF_p}$. The DG ring~\eqref{e:again economic model over F_p} is equipped with operators $F,V$ satisfying the usual identities; they come from the maps $F,V:W_{\BF_p}\to W_{\BF_p}$.
So we can apply to \eqref{e:again economic model over F_p} the Lau equivalence $\fL$ (see Proposition~\ref{p:Lau equivalence}) and get a $\BZ$-graded 
DG ring ind-scheme $C_{n,\BF_p}^\oplus$, where 
\begin{equation} \label{e:economic model for reduction of sheared sR_n oplus}
C_{n,\BF_p}^\oplus:=\cone ((\hat W_{\BF_p}^{(F^n)})^\oplus\overset{d}\longrightarrow W_{n,\BF_p}^\oplus ).
\end{equation} 

We claim that $C_{n,\BF_p}^\oplus$ is a model for $\SR_{n,\BF_p}^\oplus$. To justify this claim, we will construct\footnote{The construction is the $m=0$ case of \S\ref{sss:economic models for hat sR_n}.} a quasi-isomorphism
\begin{equation}   \label{e:B_n epi C_n over F_p}
B_{n,\BF_p}^\oplus\epi C_{n,\BF_p}^\oplus ,
\end{equation}
where $B_{n,\BF_p}^\oplus$ is the model from \S\ref{ss:applying Lau equivalence to B_{n,F_p}}. 

\subsubsection{Constructing \eqref{e:B_n epi C_n over F_p}} 
$C_{n,\BF_p}$ is the quotient of the DG ring $B_{n,\BF_p}$ from \S\ref{sss:V on B_n in characteristic p} by the acyclic ideal $\cone (I'_{n,\BF_p}\to V^n(W_{\BF_p}))$, where
\[
I'_{n,\BF_p}=\{ (x,y)\in W^2_{\BF_p}\,|\, x=V^ny\};
\]
note that $I'_{n,\BF_p}\subset I_{n,\BF_p}$, where $I_n$ is given by \eqref{e:description of I_n}. The map $B_{n,\BF_p}\epi C_{n,\BF_p}$ commutes with $F$ and $V$. So we get \eqref{e:B_n epi C_n over F_p}

\subsection{The ring stack $\Cone (\hat W^{(F^n)}\longrightarrow  W_n)$ for arbitrary $p$}
\begin{lem}   \label{l:the naive ring stack for any p}
For any prime $p$ and any $n\in\BN$, the ring stack
$\Cone (\hat W^{(F^n)}\longrightarrow  W_n)$ is canonically isomorphic to $\Cone (\SW\overset{p^n\bu}\longrightarrow\SW)$, where $\bu\in W(\BZ_p)^\times$ is as in \S\ref{sss:the Witt vector u}.
\end{lem}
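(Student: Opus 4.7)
The plan is to adapt the economic-model constructions of \S\ref{ss:hat sR_n as a quotient of W}--\S\ref{s:economic models for hat sR_n} to a twisted setting in which every occurrence of $p^n$ becomes $p^n\bu$. First I would introduce the twisted analog of the quasi-ideal from \eqref{e:description of I_n}:
\[
I_n^\bu := \{(x,y)\in W^2 \mid F^n x = p^n\bu\, y,\ x - \hV^n y \in \hat W\},
\]
with $d^\bu(x,y) = x$ and $W$-action $a\cdot(x,y) = (ax, F^n(a)y)$. The well-definedness of the associated map $(p^n\bu,\hV^n): W \to W\times_{Q,F^n}Q$ rests on $\overline{p^n\bu\, w} = p^n \bar w = F^n(\overline{\hV^n w})$ in $Q$; the outer equalities follow by iterating $p\bar\bu = p$ from \eqref{e:property of bar bu} to get $p^n\bar\bu = p^n$ and the analogous $p^n\bar\bu_n = p^n$, while the middle uses $F^n\hV^n = p^n\bu_n$ from \S\ref{sss:warning to myself}(i). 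I then claim $\Cone(\SW\overset{p^n\bu}\to\SW)\simeq \cone(I_n^\bu\to W)$ by repeating the proofs of Propositions~\ref{p:quasi-isomorphism} and~\ref{p:2quasi-isomorphism} verbatim with $p^n$ replaced by $p^n\bu$.

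The only substantive new input needed for that first step is acyclicity of the kernel of the natural surjection from $\cone(\SW\overset{p^n\bu}\to\SW)$ onto the intermediate $A_n^\bu := \cone(W\overset{(p^n\bu,\hV^n)}\to W\times_{Q,F^n}Q)$. As in Proposition~\ref{p:quasi-isomorphism}'s proof, this kernel has the form $\cone(T_F(Q)\overset{p^n\bu}\to B_n)$, with $B_n \subset T_F(Q)$ cut out by $q_n = 0$. The crucial observation is that $p^n\bu$ and $p^n$ induce the same operator on $T_F(Q)$: under the embedding $T_F(Q)\hookrightarrow \SW = W\times_Q Q^{\perf}$ as pairs $(0,(q_i))$, any $\xi\in\SW$ acts solely through its $Q^{\perf}$-component, and the $Q^{\perf}$-component of $p^n\bu$ is the constant $F$-compatible lift of $p^n\bar\bu = p^n \in Q$, identical to that of $p^n$. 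The kernel therefore reduces to $\cone(T_F(Q)\overset{p^n}\to B_n)$, acyclic by the original argument.

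The second and final step identifies $\cone(I_n^\bu\to W)$ with $\Cone(\hat W^{(F^n)}\to W_n)$ by quotienting out the sub-DG-ideal $(J_n^\bu \to V^n W) \subset (I_n^\bu \to W)$, where $J_n^\bu := \{(\hV^n y, y) : y\in W\}$. The inclusion $J_n^\bu\subset I_n^\bu$ requires $\bu_n = \bu$ (to ensure $F^n\hV^n y = p^n\bu y$); this holds for the specific $\bu$ of \S\ref{sss:the Witt vector u} by \S\ref{sss:warning to myself}(ii), and any other valid $\bu$ reduces to this via a $(1+\hat W)$-rescaling of the quasi-ideal in the spirit of \S\ref{sss:independence of bu}. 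The map $d^\bu: J_n^\bu \to W$ has image $\hV^n(W) = V^n(\bu_n W) = V^n W$, so the quotient ring is $W_n$, and $(x,y)\mapsto x - \hV^n y$ defines a canonical isomorphism $I_n^\bu/J_n^\bu \iso \hat W^{(F^n)}$: the image lies in $\hat W^{(F^n)}$ because $x - \hV^n y \in \hat W$ by the defining condition and $F^n(x - \hV^n y) = p^n\bu y - p^n\bu_n y = 0$, with inverse $h\mapsto (h,0)$. Since $\cone(J_n^\bu \overset{d^\bu}\to V^n W)$ is acyclic ($d^\bu$ being an isomorphism onto $V^n W$), the DG-ring quotient $\cone(I_n^\bu\to W) \twoheadrightarrow \cone(\hat W^{(F^n)} \to W_n)$ is a quasi-isomorphism, yielding the desired isomorphism of ring stacks. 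The conceptual content---and what I expect to be the hardest part to articulate cleanly---is that the $\bu$-twist precisely absorbs the obstruction in characteristic $2$ that forced $m\geq\delta_p$ in Lemma~\ref{l:I' is contained in I}, namely the discrepancy $V^n - \hV^n = V^n(1-\bar\bu_n)$ in $Q$, so that the twisted economic model works at $m=0$ for all $p$.
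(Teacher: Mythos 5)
Your proof is correct, but it takes a genuinely different route from the paper's. The paper's argument is a two-line reduction: Lemma~\ref{l:kernel and cokernel} presents $\hat W^{(F^n)}$ as $\Ker(F^n:\SW\to\SW)$ and $W_n$ as $\Coker(\hV^n:\SW\to\SW)$; the octahedral-type manipulation of \cite[Prop.~3.5.1]{Prismatization} applied to $S\xrightarrow{\hV^n}S\xrightarrow{F^n}S$ then yields $\Cone(\hat W^{(F^n)}\to W_n)\simeq\Cone(\SW\xrightarrow{F^n\hV^n}\SW)$, and \S\ref{sss:warning to myself} converts $F^n\hV^n=p^n\bu_n$ to $p^n\bu$ up to a $(1+\hat W)$-unit. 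You instead re-run the explicit economic-model machinery of \S\ref{s:hat sR_n as a quotient of W}--\S\ref{s:economic models for hat sR_n} in a $\bu$-twisted form, building $I_n^\bu\to W$ and showing it represents both sides after quotienting by the acyclic sub-quasi-ideal $J_n^\bu\to V^n W$. The trade-off: your version is longer and re-proves several surjectivity/acyclicity steps, but it is self-contained (no citation of an external $\Cone$-manipulation lemma) and it makes visible the structural point you highlight at the end, namely that the $\bu$-twist exactly cancels the discrepancy $V^n-\hV^n$ that forced $m\ge\delta_p$ in Lemma~\ref{l:I' is contained in I}.

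Two details worth tightening in your write-up. First, the inclusion $J_n^\bu\subset I_n^\bu$ (and, downstream, $F^n(x-\hV^ny)=0$) needs $\bu_n=\bu$ \emph{exactly} in $W(\BZ_p)$, not just modulo $\hat W$: since $W(\BZ_p)$ is $p$-torsion-free, $p^n(\bu_n-\bu)=0$ forces $\bu_n=\bu$. Your appeal to ``the specific $\bu$ of \S\ref{sss:the Witt vector u}'' is therefore not quite right as stated --- that section fixes \emph{some} lift of $\bar\bu$, not necessarily one with $F(\bu)=1$. The correct statement is that one \emph{may} choose $\bu$ with $\bu_n=\bu$ (as in \S\ref{sss:warning to myself}(ii)), and for any other choice $\bu'$ the unit $\bu'/\bu\in 1+\hat W(\BZ_p)\subset\SW(\BZ_p)^\times$ gives a canonical isomorphism $\Cone(\SW\xrightarrow{p^n\bu}\SW)\simeq\Cone(\SW\xrightarrow{p^n\bu'}\SW)$, so the lemma is choice-independent. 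Second, your key observation that $p^n\bu$ and $p^n$ act identically on $T_F(Q)$ deserves one more sentence: it relies on $p^n\bu-p^n\in\hat W(\BZ_p)$ (from \eqref{e:where bp lives}) together with the fact that $\hat W\subset\SW$ is precisely the kernel of $\SW\to Q^{\perf}$ (sequence~\eqref{e:Akhil's sequence1}), while $\SW$ acts on $T_F(Q)\subset\SW$ through the $Q^{\perf}$-component. With those clarifications the argument is complete.
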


Note that $p^n\bu\in\SW (\BZ_p)$ because $p\bu=\bp\in\SW (\BZ_p)$.

\begin{proof}
Using Lemma~\ref{l:kernel and cokernel} and mimicking the proof of \cite[Prop.~3.5.1]{Prismatization}, one gets a canonical isomorphism
$\Cone (\hat W^{(F^n)}\to W_n)\iso\Cone (\SW\overset{F^n\hV^n}\longrightarrow\SW)$. By \S\ref{sss:warning to myself}(i-ii) and the inclusion \eqref{e:doesn't matter}, 
$\Cone (\SW\overset{F^n\hV^n}\longrightarrow\SW)$ is canonically isomorphic to $\Cone (\SW\overset{p^n\bu}\longrightarrow\SW)$.
\end{proof}

\section{A class of models for $\SR_n^\oplus$}      \label{s:A class of models}
\subsection{Ind-affineness of certain morphisms}  \label{ss:ind-affineness}
A morphism of stacks $\sX\to\sY$ is said to be ind-affine if for any $p$-nilpotent affine scheme $S$ and any morphism $S\to\sY$ the stack $\sX\times_\sY S$ is an ind-affine ind-scheme (i.e., it can be represented as an inductive limit of a directed family of affine schemes with respect to closed immersions).

\begin{prop}   \label{p:ind-affineness}
The diagonal morphism $\SR_n\to\SR_n\times\SR_n$ is ind-affine. 
\end{prop}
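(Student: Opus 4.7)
\emph{Strategy.} The plan is to exploit the Picard structure on $\SR_n$ to reduce the statement to ind-affineness of the zero section $0:\Spf\BZ_p\to\SR_n$, and then to realize the corresponding fiber as a closed sub-ind-scheme of a torsor under an ind-affine group ind-scheme, using one of the economic models of \S\ref{ss:economic models for hat sR_n}.

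\emph{Reduction to the zero section.} For any commutative Picard stack $\sX$, the autoequivalence $(x,y)\mapsto(x,y-x)$ of $\sX\times\sX$ carries the diagonal $\Delta:\sX\to\sX\times\sX$ to $(\id_\sX,0):\sX\to\sX\times\sX$. Consequently, for any morphism $(x_1,x_2):S\to\SR_n\times\SR_n$ from a $p$-nilpotent affine $S$, there is a canonical identification
\[
\SR_n\times_{\SR_n\times\SR_n,\,\Delta}S \;\simeq\; S\times_{\SR_n,\,0}\Spf\BZ_p,
\]
where the map $S\to\SR_n$ on the right is $x_2-x_1$. So it suffices to show that the zero section $0:\Spf\BZ_p\to\SR_n$ is ind-affine.

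\emph{Computing the zero-section fiber.} Take $m=\delta_p$ (see \eqref{e:defining delta_p}) and use the economic model \eqref{e:The economic model}
\[
\SR_n \;\simeq\; \Cone\bigl(H\xrightarrow{d}W_n\bigr),\qquad H := \hat W^{(F^{m+n})}/V^n\bigl(\hat W^{(F^m)}\bigr),
\]
from \S\ref{sss:economic models for hat sR_n}. By Lemma~\ref{l:the quotient is an ind-scheme}(ii), $H$ is ind-finite, hence ind-affine, over $\Spf\BZ_p$, and $\SR_n$ is presented as the Picard quotient $[W_n/H]$ with $H$ acting by translation via $d$. A morphism $g:S\to\SR_n$ is then the data of an $H$-torsor $P$ over $S$ together with an $H$-equivariant map $\psi:P\to W_n$, and unwinding the universal property gives
\[
S\times_{\SR_n,\,0}\Spf\BZ_p \;=\; \psi^{-1}(0) \;\subseteq\; P,
\]
a closed sub-ind-scheme of $P$. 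To finish, one needs $P$ itself to be ind-affine over $S$: write $H=\varinjlim H_\alpha$ as a filtered colimit of finite affine group schemes along closed immersions, choose an fpqc trivialization $P|_{S'}\cong H\times S'$, and observe that the descent datum preserves the intrinsic filtration $\{H_\alpha\times S'\}$. Term-by-term faithfully flat descent of affine schemes then assembles $P$ as an ind-affine $S$-scheme, and $\psi^{-1}(0)$ inherits the same property.

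\emph{Main obstacle.} The only delicate step is the fpqc descent of ind-affineness for $P$, i.e.\ verifying that the filtration on $H$ descends to a filtration on $P$. This goes through because $H$ and its filtration by $H_\alpha$ are pulled back from $\Spf\BZ_p$ (hence invariant under both projections $S'\times_S S'\rightrightarrows S'$), so the descent datum on $P|_{S'}$ automatically respects each subtorsor $P_\alpha|_{S'}$ and Grothendieck's classical descent of affine schemes applies level by level; everything else in the argument is formal.
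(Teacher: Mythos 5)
Your reduction to the zero section (passing from the diagonal to $g=f_1-f_2$) is the same first step the paper takes, and your identification of the fiber $\psi^{-1}(0)$ as a closed sub-ind-scheme of an $H$-torsor $P$ is also correct, once one knows $P$ is ind-affine. The gap is in the ``Main obstacle'' paragraph, and it is a genuine one.

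You write $H=\varinjlim H_\alpha$ ``as a filtered colimit of finite affine \emph{group} schemes along closed immersions,'' and then do descent level by level. But $H=\hat W^{(F^{m+n})}/V^n(\hat W^{(F^m)})$ is \emph{not} a filtered colimit of finite subgroup schemes: the exhaustion supplied by Proposition~\ref{p:On I_{n,m}/I'_{n,m}}(i) is by closed sub\emph{schemes} $\Spf C_i$ (defined by degree truncations of a $\BG_m$-graded coordinate ring), which are not stable under the group law. Already for $\hat\BG_a=\hat W^{(F)}$ over $\Spf\BZ_p$, the pieces $\Spf\BZ_p[x]/x^r$ are not subgroups. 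Hence there are no ``subtorsors $P_\alpha$'' to descend, and the appeal to invariance of the filtration under the two projections $S'\times_S S'\rightrightarrows S'$ is beside the point: what moves the filtration around is translation by the descent cocycle, which does not preserve non-subgroup subschemes. So the term-by-term descent does not get off the ground.

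The missing ingredient is precisely Lemma~\ref{l:ind-affineness}: if $H_0$ is a commutative affine group scheme whose ring of functions is projective, then torsors under its Cartier dual $H_0^*$ are ind-affine, proved not by filtering $H_0^*$ by subgroups but by reinterpreting an $H_0^*$-torsor as an extension $0\to\BG_m\to\tilde H_0\to H_0\to 0$ and cutting the sheaf of splittings out of the ind-affine space of sections of a line bundle and its inverse. The paper applies this with $H_0=W$, $H_0^*=\hat W$, after choosing the model $\SR_n=\Cone(\tilde A_n^{-1}\to\tilde A_n^0)$ from \S\ref{s:tilde A_n}, where $\tilde A_n^{-1}\simeq W\oplus\hat W$: the $W$-factor gives an honest affine torsor by flat descent (Remark~\ref{r:representability of torsors}) and the $\hat W$-factor is handled by Lemma~\ref{l:ind-affineness}. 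If you want to stay with your model $\Cone(H\to W_n)$, you could instead invoke Proposition~\ref{p:On I_{n,m}/I'_{n,m}}(ii), which exhibits $H$ as the Cartier dual of the flat affine group scheme $G_{n,m}$, and then apply Lemma~\ref{l:ind-affineness} with $H_0=G_{n,m}$ (after checking the projectivity hypothesis); but some version of that lemma is unavoidable, and your descent argument cannot replace it.
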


The proof will be given in \S\ref{sss:proof of ind-affineness}.

\begin{cor}   \label{c:ind-affineness}
The morphism of stacks $\SR_n^\oplus\to \SR_n [u,u^{-1}]\times \SR_n [t,t^{-1}]$ from \S\ref{ss:Defining hat sR_n}  is ind-affine. 
\end{cor}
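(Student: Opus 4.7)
The plan is to combine Proposition~\ref{p:positive/nonpositive degrees} with Proposition~\ref{p:ind-affineness} via a graph-of-morphism argument applied in each graded degree.

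Fix $i \in \BZ$. By Proposition~\ref{p:positive/nonpositive degrees}, the degree-$i$ part of the morphism $\SR_n^\oplus \to \SR_n[u,u^{-1}] \times \SR_n[t,t^{-1}]$ has one of its two projections (the first if $i > 0$, the second if $i \le 0$) being an isomorphism of ring stacks. After canonically identifying both degree-$i$ components of the target with $\SR_n$, this piece of the morphism becomes a map $\gamma_i \colon \SR_n \to \SR_n \times \SR_n$ with $\pi_j \circ \gamma_i = \id$ for some $j$; hence $\gamma_i$ is the graph of a morphism $\phi_i \colon \SR_n \to \SR_n$. Any such graph morphism sits in a Cartesian square whose bottom row is the diagonal of $\SR_n$: for $i > 0$,
\[
\xymatrix{
\SR_n \ar[r]^-{\gamma_i} \ar[d]_-{\phi_i} & \SR_n \times \SR_n \ar[d]^-{(\phi_i \circ \pi_1,\, \pi_2)} \\
\SR_n \ar[r]^-{\Delta} & \SR_n \times \SR_n
}
\]
(with $\pi_1$ and $\pi_2$ interchanged for $i \le 0$). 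Since ind-affineness is stable under base change, and $\Delta$ is ind-affine by Proposition~\ref{p:ind-affineness}, each $\gamma_i$ is ind-affine.

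It remains to deduce ind-affineness of the total morphism from degree-wise ind-affineness. I would argue as follows: for $R \in \pNilp$, an $R$-point of the target is a graded element supported in finitely many degrees, so the fiber product over such a point decomposes as the graded direct sum of the degree-$i$ fiber products, each of which is ind-affine over $R$ by the previous step. This direct sum is a filtered colimit (over finite subsets of $\BZ$) of finite products of ind-affine ind-schemes, hence is itself ind-affine. The main obstacle — the only point really requiring care — is this last combinatorial step of assembling the degree-wise fibers; the rest is the graph-of-diagonal trick applied to Proposition~\ref{p:ind-affineness}.
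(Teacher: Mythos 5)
Your proof is correct and is the natural way to unwind the paper's one-line proof (which just reads "Combine Propositions~\ref{p:positive/nonpositive degrees} and \ref{p:ind-affineness}"). The graph-of-the-diagonal trick degree by degree is exactly the right way to reduce to Proposition~\ref{p:ind-affineness}, and your Cartesian square is the standard one expressing a graph morphism as a base change of the diagonal.

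The assembly step that you flag is indeed the only point requiring care, and you have correctly located it. To close it completely one would add two observations. First, for degrees $i$ outside the (finite) support of the chosen $R$-point of the target, the degree-$i$ fiber is the kernel of $\gamma_i$ over $0$, which is the sheaf $H^{-1}(\SR_n)_R$; this is ind-finite over $R$ (it is $\ker(\tilde A_n^{-1}\to\tilde A_n^0)$ in the notation of \S\ref{s:tilde A_n}), and in particular it has a closed zero section. Second, because $\SR_n^\oplus$ and its target are \emph{direct sums} (not products) of their graded pieces, the isomorphism in the groupoid $T(R')$ that is part of an $R'$-point of the fiber is itself supported in finitely many degrees; this is what makes the fiber a filtered \emph{colimit} of finite products rather than an infinite product. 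With those two facts in hand, the transition maps in the colimit are closed immersions (inserting the zero section of $H^{-1}(\SR_n)_R$), so the colimit is an ind-affine ind-scheme, completing your argument.
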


\begin{proof}
Combine Propositions~\ref{p:positive/nonpositive degrees} and \ref{p:ind-affineness}.
\end{proof}

To prove Proposition~\ref{p:ind-affineness}, we need the following lemma.

\begin{lem}  \label{l:ind-affineness}
Let $R$ be a ring. Let $H$ be a commutative affine group $R$-scheme such that the $R$-module $H^0(H,\cO_H)$ is projective.
Let $\cF$ be an $H^*$-torsor\footnote{In other words, $\cF$ is an fpqc sheaf on the category of $R$-schemes which is a torsor over the sheaf of sections of $H^*$.}, where $H^*$ is the Cartier dual.
Then $\cF$ is representable by an ind-affine ind-scheme over $R$.
\end{lem}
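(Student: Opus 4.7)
The plan is to realize $\cF$ as a closed sub-ind-scheme of an explicit ind-affine linear ind-scheme, paralleling a similar realization for $H^{*}$ itself. I proceed in three steps.

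\emph{Step 1 (Template: $H^{*}$ is ind-affine).} Since $\cO(H) := H^{0}(H,\cO_{H})$ is a projective $R$-module, I exhibit the functor $S\mapsto \cO(H)\otimes_{R}\cO(S)$ as an ind-affine ind-scheme: writing $\cO(H)$ as a filtered colimit of finite projective $R$-modules $M_{\alpha}$ (so that $M_{\alpha}\otimes_{R}\cO(S)=\Hom_{R}(M_{\alpha}^{\vee},\cO(S))$ is represented by $V(M_{\alpha}):=\Spec\Sym_{R}M_{\alpha}^{\vee}$), I get an ind-affine ind-scheme $V(\cO(H)):=\colim_{\alpha}V(M_{\alpha})$. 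The subfunctor $H^{*}\subset V(\cO(H))$ is cut out by the closed algebraic conditions $\Delta(g)=g\otimes g$ (inside $\cO(H)\otimes_{R}\cO(H)\otimes_{R}\cO(S)$) and $\varepsilon(g)=1$, so $H^{*}$ is ind-closed in $V(\cO(H))$ and hence ind-affine.

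\emph{Step 2 (Reinterpretation of $\cF$).} Using Cartier duality in the form established by the references \cite{Akhil3, Bou}, I reinterpret the $H^{*}$-torsor $\cF$ as a \emph{multiplicative line bundle} on $H$: a pair $(L,\mu)$ where $L$ is an invertible $\cO(H)$-module with $e^{*}L\cong R$ and $\mu:m^{*}L\iso p_{1}^{*}L\otimes p_{2}^{*}L$ is a cocycle-satisfying isomorphism on $H\times H$. Under this dictionary, a section of $\cF$ over $S$ corresponds to a section $s\in L\otimes_{R}\cO(S)$ satisfying $\mu(m^{*}s)=p_{1}^{*}s\otimes p_{2}^{*}s$ together with the normalization $e^{*}s=1$. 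Pulling the multiplicativity identity back along $(h,h^{-1}):\Spec T\to H\times H$ yields $s(h)\cdot s(h^{-1})=1$, so any such $s$ is automatically nowhere-vanishing and no openness constraint needs to be imposed.

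\emph{Step 3 (Representing $\cF$).} I then repeat Step 1 with $L$ in place of $\cO(H)$. Because $\cF$ is fpqc-locally trivial, there exists a faithfully flat $R\to R'$ with $L\otimes_{R}R'\cong \cO(H)\otimes_{R}R'$ as $R'$-modules; projectivity descends along fpqc covers (Raynaud--Gruson), so $L$ is itself $R$-projective. Writing $L$ as a filtered colimit of finite projective $R$-modules realizes the functor $S\mapsto L\otimes_{R}\cO(S)$ as an ind-affine linear ind-scheme $V(L)$, and the multiplicativity and normalization conditions from Step 2 are algebraic closed conditions on $V(L)$, carving out $\cF$ as an ind-closed sub-ind-scheme. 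Hence $\cF$ is ind-affine over $R$. The main obstacle is Step 2, namely the precise formulation of the equivalence between $H^{*}$-torsors and multiplicative line bundles in this non-finite Cartier duality setting; this is exactly the content of \cite{Akhil3, Bou}, and applying it correctly is the only substantial input. A secondary technical point is the representation of a projective $R$-module as a filtered colimit of finite projective modules, which is standard (by Kaplansky's theorem plus a Lazard-type argument applied to the summand structure) but requires mild care for non-Noetherian $R$.
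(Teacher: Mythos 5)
Your proposal follows essentially the same route as the paper: convert the $H^{*}$-torsor $\cF$ via flat descent / Cartier duality into geometric data over $H$, realize $\cF$ as the vanishing locus of closed algebraic conditions inside a ``linear'' ind-affine ambient functor, and use projectivity of $H^{0}(H,\cO_{H})$ to conclude. The paper's proof interprets $\cF$ as the sheaf of splittings of a $\BG_{m}$-extension $0\to\BG_{m}\to\tilde H\to H\to 0$ (with $\tilde H$ the twist of the trivial extension by the torsor); you interpret $\cF$ as multiplicative trivializations of the associated line bundle $(L,\mu)$, which is the same thing in different clothing. The difference lies in two points, both in your favor stylistically but one of them costly technically.

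First, the paper parametrizes a splitting by a \emph{pair} $(s_{+},s_{-})\in M_{+}\oplus M_{-}$ with $M_{\pm}=H^{0}(H,L^{\pm 1})$, the equation $s_{+}s_{-}=1$ making invertibility a manifestly closed condition; you work with a single section $s\in L$ and observe that $s\cdot\mathrm{inv}^{*}s=1$ is forced by multiplicativity and normalization, so invertibility is automatic. Your observation is correct and economizes on variables, but the paper's pair $(s_{+},s_{-})$ avoids having to argue this and keeps the closedness of the conditions entirely transparent.

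Second, and this is the real issue: you represent the linear ambient by writing the relevant projective module as a filtered colimit of finitely generated projective submodules whose transition maps dualize to surjections (hence induce closed immersions of the $V(M_{\alpha})$). It is not clear that an arbitrary projective module over an arbitrary ring admits such an exhaustion by finitely generated projective direct summands with split-injective transition maps; ``Kaplansky plus Lazard'' yields a colimit of finite \emph{free} modules (Lazard) or a direct sum decomposition into countably generated projectives (Kaplansky), neither of which gives what you need on the nose. The paper sidesteps this entirely: since $M_{+}$ and $M_{-}$ are invertible $\cO(H)$-modules and $\cO(H)$ is $R$-projective, $M_{+}\oplus M_{-}$ is a direct summand of a free $R$-module $R^{(I)}$; the functor $\tilde R\mapsto\tilde R^{(I)}=\colim_{J}\BA^{J}$ is manifestly an ind-affine ind-scheme, and the idempotent cutting out $M_{+}\oplus M_{-}$ imposes a closed condition on each $\BA^{J}$. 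You should adopt this: you already know $L$ is $R$-projective (your fpqc descent argument works, though simply noting that $L$ is an invertible $\cO(H)$-module and $\cO(H)$ is $R$-projective is more direct), so present $L$ as a direct summand of a free $R$-module rather than as a filtered colimit of finite projectives, and the rest of your argument goes through as written.
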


\begin{rem}    \label{r:representability of torsors}
$H$-torsors are representable by affine schemes: this is a well known consequence of the theory of flat descent.
\end{rem}

\begin{proof}[Proof of Lemma~\ref{l:ind-affineness}]
Interpreting $H^*$ as the automorphism group of the trivial extension
$$0\to\BG_m\to H\oplus\BG_m\to  H\to 0$$
and using the theory of flat descent, we see that $\cF$ defines an extension 
\begin{equation}   \label{e:extension by G_m}
0\to\BG_m\to\tilde H\to  H\to 0,
\end{equation}
where $\tilde H$ is an affine $R$-scheme. Then $\cF$ is the sheaf of splittings of \eqref{e:extension by G_m}.

Let $M_+:=H^0(H, L)$, $M_-:=H^0(H,  L^{-1})$, where $L$ is the line bundle  on $H$ corresponding to the
$\BG_m$-torsor $\tilde H\to H$. A splitting of \eqref{e:extension by G_m} can be viewed as a pair 
\[
(s_+,s_-), \quad s_+\in M_+, \; s_-\in M_-
\]
satisfying certain equations (one of them is $s_+s_-=1)$. So it remains to show that the functor
\[
\{R\mbox{-algebras}\}\to\{\mbox{Sets}\}, \quad \tilde R\mapsto\tilde R\otimes_R (M_+\oplus M_-)
\]
is an ind-affine ind-scheme over $R$. To see this, represent $M_+\oplus M_-$ as a direct summand of a free $R$-module;
this is possible because the $R$-module $H^0(H,\cO_H)$ is assumed to be projective.
\end{proof}

\subsubsection{Proof of Proposition~\ref{p:ind-affineness}} \label{sss:proof of ind-affineness}
Let $S$ be a $p$-nilpotent affine scheme equipped with a morphism $f:S\to\SR_n\times\SR_n$. 
Let $X:=S\times_{\SR_n\times\SR_n}\SR_n$, where the map $\SR_n\to\SR_n\times\SR_n$ is the diagonal. Then $X$ can be described as follows.

Let $g:=f_1-f_2$, where $f_1,f_2:S\to\SR_n$ are the components of~$f$. By \S\ref{s:tilde A_n},
$$\SR_n=\Cone (\tilde A_n^{-1}\to \tilde A_n^0),$$ 
where $A_n^{-1}$ and $A_n^0$ are additively isomorphic to $W\oplus\hat W$.
So $g:S\to\SR_n$ is given given by a $\tilde A_n^{-1}$-torsor $T\to S$ and a  $\tilde A_n^{-1}$-equivariant map $h:T\to \tilde A_n^0$. In these terms, $X=h^{-1}(0)$.

\emph{A priori}, $T$ and $X$ are fpqc sheaves. We have to prove that $X$ is an ind-affine ind-scheme. It suffices to show that $T$ is an ind-affine ind-scheme.
This follows from Lemma~\ref{l:ind-affineness} and Remark~\ref{r:representability of torsors} because the group ind-scheme $\tilde A_n^{-1}$ is isomorphic to $W\oplus\hat W$, and $\hat W$ is Cartier dual to $W$.
\qed

\subsection{A class of models for $\SR_n^\oplus$ and $\SR_{n,\BF_p}^\oplus$}   \label{ss:2economic models}
\subsubsection{}  \label{sss:tautological construction}
Suppose we have a quasi-ideal pair
\begin{equation}    \label{e:I to C}
I\overset{d}\longrightarrow C
\end{equation}
with $\Cone (I\overset{d}\longrightarrow C)=\SR_n$ such that $C$ and $I$ are ind-schemes over $\Spf\BZ_p$. (E.g., formula~\eqref{e:The economic model} provides such a pair by Lemma~\ref{l:the quotient is an ind-scheme}; this pair depends on the choice of a number $m\ge\delta_p$.)
Using \eqref{e:I to C} as an input datum, we are going to construct a model for $\SR_n^\oplus$ in a rather tautological way.

Let $\tilde C$  be the fpqc sheaf of graded rings defined by the pullback diagram
\begin{equation} \label{e:defining tilde C}
\xymatrix{
\tilde C\ar@{->>}[r] \ar[d] & \SR_n^\oplus\ar[d]\\
C[u,u^{-1}]\times C[t,t^{-1}]\ar@{->>}[r] & \SR_n[u,u^{-1}]\times \SR_n[t,t^{-1}]
}
\end{equation}
in which the lower horizontal arrow comes from the epimorphism $C\epi\SR_n$ and the right vertical arrow was constructed in \S\ref{ss:Defining hat sR_n}.
By Corollary~\ref{c:ind-affineness}, $\tilde C$ is an ind-scheme.
The two horizontal arrows of \eqref{e:defining tilde C} have the same fibers over $0$, namely $I[u,u^{-1}]\times I[t,t^{-1}]$. Thus we have constructed a quasi-ideal pair
\begin{equation}   \label{e:the quasi-ideal in tilde C}
I[u,u^{-1}]\times I[t,t^{-1}]\overset{d}\longrightarrow \tilde C
\end{equation}
whose Cone equals $\SR_n^\oplus$. Moreover, $\tilde C$ is a $\BZ$-graded ring ind-scheme, and the map $d$ from \eqref{e:the quasi-ideal in tilde C} is compatible with the gradings (assuming that $\deg t=-1$, $\deg u=1$).

By construction, we get a homomorphism of quasi-ideal pairs
\begin{equation}  \label{e:triangular diagram}
\xymatrix{
I[u,u^{-1}]\times I[t,t^{-1}] \ar[r] \ar[d]_{\id} & \tilde C\ar[d]\\
I[u,u^{-1}]\times I[t,t^{-1}]\ar[r] & C[u,u^{-1}]\times C[t,t^{-1}]
}
\end{equation}
such that the corresponding morphism of Cones is the right vertical arrow of \eqref{e:defining tilde C}.

From diagram \eqref{e:triangular diagram} one gets the following two complexes of $\BZ$-graded commutative group ind-schemes
\begin{equation} \label{e:complex1}
0\to I[t,t^{-1}]\to \tilde C\to C[u, u^{-1}]\to 0, 
\end{equation} 
\begin{equation} \label{e:complex2}
0\to I[u, u^{-1}]\to \tilde C\to C[t,t^{-1}]\to 0.
\end{equation}

\begin{prop}  \label{p:exactness statements}
The complex \eqref{e:complex1} is exact in positive degrees, and \eqref{e:complex2} is exact in non-positive degrees.
Thus we have exact sequences
\begin{equation}  \label{e:exact in positive degrees}
0\to  t^{-1} I[t^{-1}]\to \tilde C_{> 0}\to uC[u]\to 0,
\end{equation}
\begin{equation}  \label{e:exact in negative degrees}
0\to I[u^{-1}]\to \tilde C_{\le 0}\to C[t]\to 0.
\end{equation}
where $\tilde C_{> 0}$ (resp.~$\tilde C_{\le 0}$) is the positively (resp.~non-positively) graded part of $\tilde C$. 
\end{prop}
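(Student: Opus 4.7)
The plan is to exploit the pullback description \eqref{e:defining tilde C} of $\tilde C$ together with Proposition~\ref{p:positive/nonpositive degrees}. The two complexes \eqref{e:complex1} and \eqref{e:complex2} are interchanged by the symmetry $u \leftrightarrow t$, so I will focus on the first; the second is entirely analogous.

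First I would identify how the map $I[t,t^{-1}] \to \tilde C$ in \eqref{e:complex1} arises. Writing $D := C[u,u^{-1}] \times C[t,t^{-1}]$ and $D' := \SR_n[u,u^{-1}] \times \SR_n[t,t^{-1}]$, the kernel of the epimorphism $D \epi D'$ is $I[u,u^{-1}] \times I[t,t^{-1}]$; by the pullback property of \eqref{e:defining tilde C} this kernel embeds canonically into $\tilde C$ as the lift of the zero section of $\SR_n^\oplus$. Projecting onto the second factor gives $I[t,t^{-1}] \hookrightarrow \tilde C$, and its composition with $\tilde C \to D \to C[u,u^{-1}]$ is zero by construction; thus \eqref{e:complex1} is indeed a complex. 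The analogous argument using the projection to the first factor produces the inclusion $I[u,u^{-1}] \hookrightarrow \tilde C$ in \eqref{e:complex2}.

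For surjectivity of the right-hand map in \eqref{e:complex1} restricted to positive degrees: given a local section $c_1$ of $C[u,u^{-1}]$ in positive degree, let $\bar c_1$ be its image in $\SR_n[u,u^{-1}]$; by Proposition~\ref{p:positive/nonpositive degrees} the projection $\SR_n^\oplus \to \SR_n[u,u^{-1}]$ is an isomorphism in positive degrees, so $\bar c_1$ lifts uniquely to some $x \in \SR_n^\oplus$; the image of $x$ in $\SR_n[t,t^{-1}]$ then lifts fpqc-locally to some $c_2 \in C[t,t^{-1}]$ along the epimorphism $C \epi \SR_n$; the resulting triple $(c_1, c_2, x)$ is a local section of $\tilde C$ mapping to $c_1$. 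This shows that $\tilde C_{>0} \to uC[u]$ is fpqc-locally surjective.

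For exactness at $\tilde C$ in positive degrees: suppose $(c_1, c_2, x)$ lies in $\tilde C$ in some positive degree with $c_1 = 0$. Then $\bar c_1 = 0$, which via the same isomorphism of Proposition~\ref{p:positive/nonpositive degrees} forces $x = 0$; hence $\bar c_2 = 0$ as well and $c_2 \in I[t,t^{-1}]$. Thus the kernel equals the image of $I[t,t^{-1}] \hookrightarrow \tilde C$ in positive degrees, which is $I[t^{-1}]$ in the notation of \eqref{e:exact in positive degrees}. A symmetric application of Proposition~\ref{p:positive/nonpositive degrees} in non-positive degrees yields \eqref{e:complex2} and hence \eqref{e:exact in negative degrees}. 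I do not foresee a serious obstacle; the only point requiring care is that ``surjective'' throughout is meant in the fpqc-local sense, which is what the complexes \eqref{e:complex1}--\eqref{e:complex2} of fpqc sheaves demand.
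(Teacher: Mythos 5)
Your proof is correct and rests on the same key input as the paper's, namely Proposition~\ref{p:positive/nonpositive degrees}, but the two arguments are organized differently. The paper forms the map of quasi-ideal pairs \eqref{e:coming from triangular diagram}, identifies the Cones of its two rows with $\SR_n^\oplus$ and $\SR_n[u,u^{-1}]$, and notes that the induced map of Cones is an isomorphism in positive degrees; hence the total complex of the bicomplex is acyclic there, and since the left vertical arrow $I[u,u^{-1}]\times I[t,t^{-1}]\to I[u,u^{-1}]$ is a split surjection with kernel $I[t,t^{-1}]$, that total complex is quasi-isomorphic to \eqref{e:complex1}, giving exactness at one stroke. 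You instead unwind the pullback \eqref{e:defining tilde C} and verify exactness of \eqref{e:complex1} term by term on fpqc-local sections. This is a legitimate alternative, but notice that you are chasing sections of a $2$-pullback whose upper-right corner is a stack, so a section of $\tilde C$ is not literally a bare triple $(c_1,c_2,x)$ but such a triple together with $2$-isomorphism data; what makes the bookkeeping harmless is precisely that the right vertical arrow of \eqref{e:defining tilde C} is a monomorphism of stacks (in each degree one of its two components is an equivalence, again by Proposition~\ref{p:positive/nonpositive degrees}), so $\tilde C$ is genuinely a sheaf and a local section is determined up to unique isomorphism by its underlying data. You also did not address injectivity of $I[t^{-1}]\to\tilde C_{>0}$, which the short exact sequence \eqref{e:exact in positive degrees} asserts; this holds because $I[u,u^{-1}]\times I[t,t^{-1}]$ is the fibre of $\tilde C\to\SR_n^\oplus$ over $0$, hence embeds into $\tilde C$. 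These are minor matters; the substance of your argument is sound and equivalent to the paper's, the bicomplex formulation simply packaging the same verification more compactly.
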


\begin{proof}
Consider the diagram
\begin{equation}  \label{e:coming from triangular diagram}
\xymatrix{
I[u,u^{-1}]\times I[t,t^{-1}] \ar[r] \ar[d] & \tilde C\ar[d]\\
I[u,u^{-1}]\ar[r] & C[u,u^{-1}]
}
\end{equation}
coming from  \eqref{e:triangular diagram}. The $\Cone$ of the upper row of \eqref{e:coming from triangular diagram} is $\SR_n^\oplus$,
and the $\Cone$ of the lower row is $\SR_n [u,u^{-1}]$. By Proposition~\ref{p:positive/nonpositive degrees}, the map $\SR_n^\oplus\to \SR_n [u,u^{-1}]$ is an isomorphism in positive degrees.
So the complex corresponding to the bicomplex \eqref{e:coming from triangular diagram} is acyclic in positive degrees. This means that \eqref{e:complex1} is acyclic in positive degrees.
The statement about \eqref{e:complex2} is proved similarly.
\end{proof}

\subsubsection{A model for $\SR_{n,\BF_p}^\oplus$}    \label{sss:2tautological construction} 
Of course, the statements from \S\ref{sss:tautological construction} and Proposition~\ref{p:exactness statements} remain valid if one works over $\Spec\BF_p$ (instead of $\Spf\BZ_p$), i.e., if the input datum is a model for $\SR_{n,\BF_p}$; 
then the output is a model for $\SR_{n,\BF_p}^\oplus$. 

Suppose that the input datum is the quasi-ideal pair
$$\hat W_{\BF_p}^{(F^n)}\to  W_{n,\BF_p}$$
from \S\ref{sss:I_{n,m}/I'_{n,m} in some cases}(ii). Define $\tilde C$ by the pullback diagram 
\begin{equation}   \label{e:the pullback diagram}
\xymatrix{
\tilde C\ar@{->>}[r] \ar[d] & \SR_{n,\BF_p}^\oplus\ar[d]\\
W_{n,\BF_p} [u,u^{-1}]\times W_{n,\BF_p} [t,t^{-1}]\ar@{->>}[r] & \SR_{n,\BF_p}[u,u^{-1}]\times \SR_{n,\BF_p} [t,t^{-1}]
}
\end{equation}
similar to \eqref{e:defining tilde C}. Our next goal is to give an explicit description of $\tilde C$ (see \S\ref{sss:description of tilde C} below).

Note that $F,V$ act on $\cone (\hat W_{\BF_p}^{(F^n)}\to  W_{n,\BF_p})$ \emph{on the nose.} So we get the model $$\cone ((\hat W_{\BF_p}^{(F^n)})^\oplus\to  W^\oplus_{n,\BF_p})$$ for $\SR_{n,\BF_p}^\oplus$.
Moreover, we have a commutative diagram
\begin{equation}  \label{e:the commutative diagram}
\xymatrix{
(\hat W_{\BF_p}^{(F^n)})^\oplus\ar[d]\ar[r]&W^\oplus_{n,\BF_p}\ar@{->>}[r] \ar[d] & \SR_{n,\BF_p}^\oplus\ar[d]\\
\hat W_{\BF_p}^{(F^n)}[u^{\pm 1}]\times\hat W_{\BF_p}^{(F^n)}[t^{\pm 1}]\ar[r]&W_{n,\BF_p} [u^{\pm 1}]\times W_{n,\BF_p} [t^{\pm 1}]\ar@{->>}[r] & \SR_{n,\BF_p}[u^{\pm 1}]\times \SR_{n,\BF_p} [t^{\pm 1}]
}
\end{equation}
whose rows are fiber sequences. Using the right square of \eqref{e:the commutative diagram} and the pullback diagram \eqref{e:the pullback diagram}, we get a homomorphism $W^\oplus_{n,\BF_p}\to\tilde C$. Thus we get a commutative diagram
\begin{equation}  \label{e:the push-out diagram}
\xymatrix{
(\hat W_{\BF_p}^{(F^n)})^\oplus \ar[r] \ar[d] & W^\oplus_{n,\BF_p}\ar@{->>}[r]\ar[d]& \SR_{n,\BF_p}^\oplus\ar[d]^{\id}\\
\hat W_{\BF_p}^{(F^n)} [u,u^{-1}]\times \hat W_{\BF_p}^{(F^n)} [t,t^{-1}] \ar[r] & \tilde C\ar@{->>}[r] & \SR_{n,\BF_p}^\oplus
}
\end{equation}
whose rows are fiber sequences. From this diagram we get the following description of $\tilde C$.

\subsubsection{Description of $\tilde C$}   \label{sss:description of tilde C}
The left square of \eqref{e:the push-out diagram} is a pushout diagram. The middle vertical arrow of \eqref{e:the push-out diagram} is injective (because the left vertical arrow is).
The ring structure on $\tilde C$ is such that the action of $W^\oplus_{n,\BF_p}$ on $\hat W_{\BF_p}^{(F^n)} [u,u^{-1}]\times \hat W_{\BF_p}^{(F^n)} [t,t^{-1}]$ via the homomorphism $$W^\oplus_{n,\BF_p}\mono\tilde C$$ is equal to the action via the homomorphism $W^\oplus_{n,\BF_p}\to W_{n,\BF_p} [u,u^{-1}]\times W_{n,\BF_p} [t,t^{-1}]$.

\subsubsection{Remark}  \label{sss:rather non-economic}
The lower row of \eqref{e:the push-out diagram} provides a model
\[
\cone (\hat W_{\BF_p}^{(F^n)} [u,u^{-1}]\times \hat W_{\BF_p}^{(F^n)} [t,t^{-1}]\to\tilde C )
\]
for $\SR_{n,\BF_p}^\oplus$; this model is rather non-economic, but it has the following advantage:
\emph{in the case $p>2$ it lifts to a model for $\SR_n^\oplus$.} This follows from \S\ref{sss:I_{n,m}/I'_{n,m} in some cases}(i) combined with \S\ref{sss:tautological construction}.

\appendix

\section{A description of $\SW (R)$ for a class of $p$-nilpotent rings $R$}  \label{s:SW (R)}
Recall that $\SW:=W\times_Q Q^{\perf}$, where $Q:=W/\hat W$.
If $R\in\pNilp$ is such that $R_{\red}$ is perfect then $Q(R)=W (R)/\hat W (R)$ by \S\ref{sss:definition of Q}, so $\SW (R)$ is rather explicit.
We will give an even more explicit description of $\SW (R)$ if $R\in\pNilp$ is \emph{weakly semiperfect} in the sense of \S\ref{ss:weakly semperfect} below (this condition  is stronger than perfectness of  $R_{\red}$);
see Corollary~\ref{c:M-L} and \S\ref{sss:why explicit}.

\subsection{Weakly semiperfect $\BF_p$-algebras}   \label{ss:weakly semperfect}
Recall that an $\BF_p$-algebra $A$ is said to be \emph{semiperfect} if the Frobenius homomorphism $\Fr_A: A\to A$ is surjective.
We say that an $\BF_p$-algebra is \emph{weakly semiperfect} if it has the equivalent properties from the following lemma.

\begin{lem}     \label{l:weakly semiperfect}
The following properties of an $\BF_p$-algebra $A$ are equivalent:

(i) there exists $n\in\BN$ such that $\im\Fr_A^n=\im\Fr_A^{n+1}$;

(ii) there exists $n\in\BN$ such that $A/(\Ker\Fr_A^n)$ is semiperfect;

(iii) there exists an ideal $I\subset A$ such that $A/I$ is semiperfect and $I\subset\Ker\Fr_A^n$ for some $n$.
\end{lem}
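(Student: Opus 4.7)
The plan is to prove the cyclic implications (i)$\Rightarrow$(ii)$\Rightarrow$(iii)$\Rightarrow$(i), throughout using the fundamental characteristic-$p$ identity $(a-b)^{p^n}=a^{p^n}-b^{p^n}$, which converts statements about equality of $p^k$-th powers into statements about differences lying in $\Ker\Fr_A^k$.

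For (i)$\Rightarrow$(ii): Suppose $\im\Fr_A^n=\im\Fr_A^{n+1}$. For any $x\in A$, we have $x^{p^n}\in\im\Fr_A^{n+1}$, so there is $y\in A$ with $x^{p^n}=y^{p^{n+1}}=(y^p)^{p^n}$. By the Frobenius identity this gives $(x-y^p)^{p^n}=0$, i.e.\ $x\equiv y^p\pmod{\Ker\Fr_A^n}$. Since $A$ is an $\BF_p$-algebra, $\Ker\Fr_A^n$ is an ideal, and the above says Frobenius is surjective on $A/\Ker\Fr_A^n$, giving (ii).

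For (ii)$\Rightarrow$(iii): Take $I:=\Ker\Fr_A^n$; this is an ideal because $\Fr_A^n$ is a ring homomorphism, and $I\subset\Ker\Fr_A^n$ trivially. For (iii)$\Rightarrow$(i): given $A/I$ semiperfect and $I\subset\Ker\Fr_A^n$, for every $x\in A$ there is $y\in A$ with $x-y^p\in I\subset\Ker\Fr_A^n$, so raising to the $p^n$-th power (using again the characteristic-$p$ identity) yields $x^{p^n}=(y^p)^{p^n}=y^{p^{n+1}}\in\im\Fr_A^{n+1}$. Hence $\im\Fr_A^n\subset\im\Fr_A^{n+1}$; the reverse inclusion is automatic, so (i) holds with the same $n$.

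There is no real obstacle here: the proof is a routine manipulation, and the only point that requires a moment of care is noticing that $\Ker\Fr_A^n$ is automatically an ideal (so that the quotient in (ii) makes sense) and that the Frobenius identity lets one freely pass between ``$x^{p^n}$ is a $p^{n+1}$-th power'' and ``$x$ is a $p$-th power modulo $\Ker\Fr_A^n$.''
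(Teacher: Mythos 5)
Your proof is correct and is essentially the same as the paper's: the paper observes that both (i) and (ii) are equivalent to the condition ``$\exists n\,\forall a\,\exists a'$ such that $\Fr_A^n(a-\Fr_A(a'))=0$,'' and that (ii)$\Leftrightarrow$(iii) is clear, which is exactly the same Frobenius manipulation you perform in the cyclic arrows (i)$\Rightarrow$(ii) and (iii)$\Rightarrow$(i). The only difference is organizational (a cycle versus a common reformulation), not substantive.
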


\begin{proof}
Both (i) and (ii) are equivalent to the following property:
\[
\exists n\, \forall a\, \exists a' \mbox{ such that } \Fr_A^n (a-\Fr_A (a'))=0.
\]
It is clear that (ii)$\Leftrightarrow$(iii).
\end{proof}

\subsection{Some lemmas}

\begin{lem}     \label{l:killed by a power of F}
Let $R\in\pNilp$. Let $I\subset R$ be an ideal whose image in $R/pR$ is killed by a power of Frobenius.
Then $W(I)$ is killed by a power of $F$.
\end{lem}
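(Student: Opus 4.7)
The plan is to produce an explicit $N$ for which $F^N$ kills $W(I)$. Fix integers $n$ and $m$ satisfying $I^{[p^n]}\subset pR$ (from the hypothesis that the image of $I$ in $R/pR$ is killed by $\Fr^n$) and $p^mR=0$ (from $p$-nilpotency of $R$). I claim $N:=n+m-1$ works, via a two-step reduction: first $F^n(W(I))\subset W(pR)$, then $F^{m-1}(W(pR))\subset W(p^mR)=0$. Here $W(J)\subset W(R)$ denotes the ideal of Witt vectors all of whose components lie in an ideal $J\subset R$.

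Step~1 is reduction modulo $p$. The componentwise map $W(R)\to W(R/pR)$ has kernel exactly $W(pR)$ and sends $W(I)$ into $W(\bar I)$, where $\bar I$ is the image of $I$. Since $R/pR$ is an $\BF_p$-algebra, $F$ on $W(R/pR)$ acts componentwise as the $p$-th power, and so $F^n$ annihilates $W(\bar I)$ because $\bar I^{[p^n]}=0$. Consequently $F^nW(I)\subset W(pR)$.

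Step~2 is a $p$-adic estimate: $F(W(p^kR))\subset W(p^{k+1}R)$ for every $k\geq 1$. Iterating this $m-1$ times carries $W(pR)$ into $W(p^mR)=0$. The estimate is a direct computation with ghost components: if every $a_i$ lies in $p^kR$, then each monomial $p^i a_i^{p^{n-i}}$ of $w_n(a)=\sum_{i=0}^n p^i a_i^{p^{n-i}}$ has $p$-adic valuation at least $i+kp^{n-i}\geq n+k$ (the required inequality $k(p^j-1)\geq j$ with $j=n-i$ holds for $p\geq 2$, $k\geq 1$, $j\geq 0$ via $p^j-1\geq j$). Therefore $w_n(F(a))=w_{n+1}(a)\in p^{n+k+1}R$. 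Writing $F(a)=(b_0,b_1,\ldots)$ and using the recurrence $p^nb_n=w_n(F(a))-\sum_{i<n}p^i b_i^{p^{n-i}}$, an induction on $n$ yields $b_n\in p^{k+1}R$: the inductive hypothesis bounds $p^i b_i^{p^{n-i}}$ for $i<n$ by the analogous inequality $(k+1)(p^j-1)\geq j$, leaving the remainder $p^nb_n$ divisible by $p^{n+k+1}$.

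The principal obstacle is this second $p$-adic bookkeeping: carrying uniform divisibility estimates through the ghost recurrence and verifying the inequalities $k(p^j-1)\geq j$ and $(k+1)(p^j-1)\geq j$ uniformly in $n$. Once this is established, combining Steps~1 and~2 gives $F^{n+m-1}(W(I))\subset F^{m-1}(W(pR))\subset W(p^mR)=0$, as required.
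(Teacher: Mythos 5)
Your Step 1 is the paper's first reduction. Step 2, however, has a genuine gap. From the ghost recurrence you obtain $p^n b_n\in p^{n+k+1}R$ and conclude $b_n\in p^{k+1}R$, but $R$ is $p$-nilpotent, hence has $p$-torsion, and in such a ring the cancellation $p^n x\in p^{n+j}R\Rightarrow x\in p^jR$ is simply false. (For instance take $R=\bZ/p^{n+k}\bZ$ and $x=p^k$: then $p^nx=0\in p^{n+k+1}R$ yet $x\notin p^{k+1}R$.) So ``dividing by $p^n$'' is not a legal move inside $R$. The containment $F(W(p^kR))\subset W(p^{k+1}R)$ is correct, but to prove it by your method you must first run the ghost-component argument over the $p$-torsion-free polynomial ring $\bZ[x_0,x_1,\ldots]$ with $a_i=p^kx_i$, where dividing by $p^n$ is valid, and then specialize to $R$ by functoriality of $F$. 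That universal-coefficient step is the crux, and it is exactly what your write-up omits; as written, the argument does not go through for a single nonzero $R\in\pNilp$.

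For comparison, the paper avoids ghost components entirely: after the same mod-$p$ step it reduces (by a $p$-power filtration of $I\cap pR$) to the case $I\subset pR$ and $pI=0$, where $I^2\subset pR\cdot I\subset pI=0$, so $I$ is a non-unital $\BF_p$-algebra with zero Frobenius; then $F$ kills $W(I)$ in one stroke because $F$ is the componentwise Frobenius on Witt vectors of an $\BF_p$-algebra. Iterating this observation gives the same bound $F^{n+m-1}$ with no valuation bookkeeping, and the universal-coefficient issue never arises because it is already packaged into the functoriality of $F$ being invoked.
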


\begin{proof}     
The lemma clearly holds if $R$ is an $\BF_p$-algebra. So it remains to prove the lemma if $I\subset pR$ and $pI=0$. In this case $W(I)$ is killed by $F$ (e.g., because $I$ is an $\BF_p$-algebra with zero Frobenius endomorphism).
\end{proof}

\begin{cor}   \label{c:killed by a power of F}
Let $R$ and $I$ be as in Lemma~\ref{l:killed by a power of F}. Then the pro-objects corresponding to the projective systems
\[
\ldots \overset{F}\longrightarrow W(R)\overset{F}\longrightarrow W(R), 
\]
\[
 \ldots \overset{F}\longrightarrow \hat W(R)\overset{F}\longrightarrow \hat W(R)
\]
do not change (up to canonical isomorphism) if $R$ is replaced by $R/I$. \qed
\end{cor}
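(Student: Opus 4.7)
The corollary reduces directly to \lemref{l:killed by a power of F}: let $n \geq 1$ be an integer with $F^n = 0$ on $W(I)$. I would first record the two termwise exact sequences of abelian groups
\[
0 \to W(I) \to W(R) \to W(R/I) \to 0, \qquad 0 \to \hat W(R) \cap W(I) \to \hat W(R) \to \hat W(R/I) \to 0.
\]
The first is immediate from the component-wise description of $W$. For the second, the only nontrivial point is surjectivity: given $\bar x \in \hat W(R/I)$ with finitely many nonzero components $\bar x_i$, each nilpotent in $R/I$, pick arbitrary lifts $x_i \in R$. Then $x_i^N \in I$ for some $N$, and because the image of $I$ in $R/pR$ is killed by some power of Frobenius, $(x_i^N)^{p^e} \in pR$ for some $e$; since $p$ is nilpotent in $R$, this forces $x_i$ to be nilpotent in $R$. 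Hence $x = (x_0, x_1, \ldots) \in \hat W(R)$ lifts $\bar x$.

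Next, viewing each exact sequence as a short exact sequence of constant projective systems with transition map $F$, the kernel systems are killed by $F^n$ (by \lemref{l:killed by a power of F}, since $\hat W(R) \cap W(I) \subset W(I)$). In particular, in each kernel system the composite $F^n$ from level $m+n$ to level $m$ is zero, so both kernel systems represent the zero pro-object.

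Finally I would invoke the standard fact that a termwise-surjective morphism of $\mathbb{N}$-indexed projective systems of abelian groups whose kernel is pro-zero is an isomorphism of pro-objects; this yields the claim. Concretely, an explicit pro-inverse to $\pi \colon \{W(R)\} \to \{W(R/I)\}$ (at depth $n$) sends $\bar x \in W(R/I)$ to $F^n \tilde x \in W(R)$ for any lift $\tilde x$; this is well-defined because $F^n$ annihilates $W(I)$, and both composites $\pi \circ g$ and $g \circ \pi$ equal $F^n$, i.e., the transition map at depth $n$. The identical construction, restricted to $\hat W$, produces the pro-inverse in the second case, using that $F^n$ also annihilates $\hat W(R) \cap W(I)$.

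No step is seriously difficult; the only mildly nonroutine point is the surjectivity check for $\hat W(R) \to \hat W(R/I)$, where one must combine the Frobenius-nilpotence of $I \bmod pR$ with $p$-nilpotence of $R$ to upgrade nilpotence modulo $I$ to genuine nilpotence in $R$.
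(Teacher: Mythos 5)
Your proof is correct and elaborates exactly the argument the paper treats as immediate (the corollary appears with only \verb|\qed|): termwise short exact sequences with kernels $W(I)$ and $\hat W(R)\cap W(I)\subset W(I)$, both killed by $F^n$ by \lemref{l:killed by a power of F}, hence pro-zero, so the projections are pro-isomorphisms. The surjectivity check for $\hat W(R)\to\hat W(R/I)$ — upgrading nilpotence mod $I$ to nilpotence in $R$ via Frobenius-nilpotence of $I\bmod pR$ together with nilpotence of the ideal $pR$ — is the one point genuinely worth spelling out, and you handle it correctly.
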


Applying Corollary~\ref{c:killed by a power of F} for $I=pR$, one gets the following statement.

\begin{cor}   \label{c:R flat}
For any $R\in\pNilp$, one has canonical isomorphisms
\[
\hat W^{\perf}(R)\iso \hat W^{\perf}(R/pR), \quad W^{\perf}(R)\iso W^{\perf}(R/pR)=W(R^\flat ), 
\]
where $R^\flat:=(R/pR)^{\perf}$. \qed
\end{cor}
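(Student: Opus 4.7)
The plan is to deduce the statement from Corollary~\ref{c:killed by a power of F} by setting $I=pR$, and then to identify $W^{\perf}(A)$ with $W(A^\flat)$ for an $\BF_p$-algebra $A$ by combining commutation of $W$ with cofiltered limits with the characteristic-$p$ identity $F=W(\Fr_A)$ on $W(A)$.

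First I would invoke Corollary~\ref{c:killed by a power of F} with the ideal $I:=pR\subset R$. The hypothesis requires that the image of $I$ in $R/pR$ be killed by a power of $\Fr$; since that image is literally zero, the condition holds trivially. Therefore the pro-objects associated with
\[
\cdots\overset{F}\longrightarrow W(R)\overset{F}\longrightarrow W(R), \qquad \cdots\overset{F}\longrightarrow \hat W(R)\overset{F}\longrightarrow \hat W(R)
\]
coincide canonically with those for $R/pR$. Passing to limits yields the first two claimed isomorphisms $\hat W^{\perf}(R)\iso \hat W^{\perf}(R/pR)$ and $W^{\perf}(R)\iso W^{\perf}(R/pR)$.

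It then remains to identify $W^{\perf}(R/pR)$ with $W(R^\flat)$. For this I would combine two standard facts. First, for any $\BF_p$-algebra $A$, the Witt-vector Frobenius $F:W(A)\to W(A)$ coincides with the functorially induced map $W(\Fr_A)$; this is classical and special to characteristic $p$. Second, the functor $W$ commutes with arbitrary cofiltered limits of rings, since $W(A)=\prod_{i\ge 0}A$ as a set and all structure maps are polynomial in the coordinates. Putting these together,
\[
W^{\perf}(R/pR)=\lim\nolimits_n W(R/pR)=W\bigl(\lim\nolimits_n R/pR\bigr)=W(R^\flat),
\]
where the middle limit is along $F=W(\Fr_{R/pR})$, the third is along $\Fr_{R/pR}$, and the final equality is the definition of $R^\flat:=(R/pR)^{\perf}$.

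I do not anticipate any serious obstacle. The only point requiring slight care is the identity $F=W(\Fr_A)$ on $W(A)$ for $A$ an $\BF_p$-algebra: it can be checked by noting that both sides are ring endomorphisms agreeing on Teichm\"uller lifts via $F([a])=[a^p]=[\Fr_A(a)]$, and that a ring endomorphism of $W(A)$ is determined by its values on Teichm\"uller lifts together with compatibility with the ghost map.
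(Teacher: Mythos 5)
Your proof follows the paper's own route exactly: apply Corollary~\ref{c:killed by a power of F} with $I=pR$ (the hypothesis is vacuous since $pR$ maps to $0$ in $R/pR$), then pass to limits, and finally use the standard identification $W^{\perf}(A)=W(A^{\perf})$ for an $\BF_p$-algebra $A$, which the paper states as an equality without comment. Your filling-in of that last identification (via $F=W(\Fr_A)$ in characteristic $p$ and commutation of $W$ with cofiltered limits) is correct; the only imprecision is the closing aside, since a ring endomorphism of $W(A)$ is \emph{not} determined by its values on Teichm\"uller lifts together with compatibility with the ghost map when $A$ has characteristic $p$ (the ghost map there factors through the first coordinate and is far from injective) --- the classical proof of $F=W(\Fr_A)$ instead reduces mod $p$ the universal Witt-polynomial identity $F_n\equiv x_n^p \pmod p$, or checks the statement on polynomial $\BF_p$-algebras and invokes functoriality.
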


\subsection{The case where $R/pR$ is weakly semiperfect}
By \S\ref{sss:definition of Q}, for any $R\in\pNilp$ such that $R_{\red}$ is perfect, one has an exact sequence
\begin{equation}   \label{e:Q(R) as a quotient}
0\to\hat W(R)\to W(R)\to Q(R)\to 0.
\end{equation}

\begin{lem}   \label{l:M-L}
If $R/pR$ is weakly semiperfect then \eqref{e:Q(R) as a quotient} induces an exact sequence
\begin{equation}
0\to\hat W^{\perf}(R)\to W^{\perf}(R)\to Q^{\perf}(R)\to 0.
\end{equation}
\end{lem}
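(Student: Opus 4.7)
The plan is to apply $\lim_F$ to the short exact sequence \eqref{e:Q(R) as a quotient}. First I would record that this sequence is valid: since $R/pR$ is weakly semiperfect, Lemma~\ref{l:weakly semiperfect}(iii) supplies an ideal $I\subset R/pR$ with $I^{p^n}=0$ and $(R/pR)/I$ semiperfect, so $R_{\red}=(R/pR)_{\red}$ is the reduction of a semiperfect ring and is therefore perfect. By \S\ref{sss:hat W-torsors}(i) this forces $Q(R)=W(R)/\hat W(R)$, so \eqref{e:Q(R) as a quotient} really is a short exact sequence of abelian groups. The standard six-term $\lim$--$\lim^1$ exact sequence attached to \eqref{e:Q(R) as a quotient} then reduces the claim to the vanishing $R^1\lim_F\hat W(R)=0$.

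The main step is to reduce to the case where $R$ is a semiperfect $\BF_p$-algebra. Let $\tilde I\subset R$ be the preimage of $I$, so $\tilde I\supset pR$ and $R/\tilde I=(R/pR)/I$ is a semiperfect $\BF_p$-algebra. The key observation is that $\tilde I$ is genuinely nilpotent: $\tilde I^{p^n}\subset pR$ and $pR$ is nilpotent since $R$ is $p$-nilpotent. Nilpotence allows one to lift each nilpotent component from $R/\tilde I$ to a nilpotent element of $R$, so the natural map $\hat W(R)\to\hat W(R/\tilde I)$ is surjective on sections, with kernel $\hat W(R)\cap W(\tilde I)\subset W(\tilde I)$. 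By Lemma~\ref{l:killed by a power of F}, $W(\tilde I)$ is annihilated by some $F^N$, hence so is this kernel. The $F$-system $\{\hat W(R)\cap W(\tilde I),F\}$ is therefore pro-zero, so the quotient map identifies $\{\hat W(R),F\}$ with $\{\hat W(R/\tilde I),F\}$ in the pro-category of abelian groups, and in particular preserves both $\lim$ and $\lim^1$.

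It remains to treat the semiperfect $\BF_p$-algebra case. Here $F$ acts on $W(R)$ componentwise as the $p$-th power map, and semiperfectness supplies the preimages: given $y=(y_0,\dots,y_k,0,\dots)\in\hat W(R)$, choose $x_i\in R$ with $x_i^p=y_i$; since $y_i$ is nilpotent so is $x_i$, so $x=(x_0,\dots,x_k,0,\dots)\in\hat W(R)$ is a preimage. Thus $F:\hat W(R)\to\hat W(R)$ is surjective, the Mittag--Leffler condition holds trivially, and $R^1\lim_F\hat W(R)=0$.

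The main obstacle is the reduction step: I need $\tilde I$ to be honestly nilpotent, not merely ``$F$-nilpotent at the level of Witt vectors'', in order to get surjectivity of $\hat W(R)\to\hat W(R/\tilde I)$ as abelian groups rather than only as sheaves. Once this is set up, Lemma~\ref{l:killed by a power of F} together with the routine pro-object formalism takes over and the semiperfect case is essentially formal.
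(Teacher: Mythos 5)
Your proof is essentially correct and follows the paper's strategy: reduce surjectivity of $W^{\perf}(R)\to Q^{\perf}(R)$ to the Mittag--Leffler property (equivalently $R^1\limfrom=0$) for the $F$-system $\{\hat W(R)\}$, and transfer this across a quotient $R\to R/J$ whose kernel has $W(J)$ killed by a power of $F$ (Lemma~\ref{l:killed by a power of F} and Corollary~\ref{c:killed by a power of F}). The only real difference is cosmetic: the paper takes $J=pR$ and argues Mittag--Leffler for $\hat W(R/pR)$ directly from stabilization of $\im\Fr^k$ (weak semiperfectness), whereas you push one step further to a quotient $R/\tilde I$ that is an honest semiperfect $\BF_p$-algebra and then observe $F$ is surjective on $\hat W(R/\tilde I)$. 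Since, by Lemma~\ref{l:weakly semiperfect}, weak semiperfectness is \emph{defined} by the existence of such a quotient, the two reductions amount to the same thing.

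There is, however, one incorrect intermediate assertion that you flag as a ``key observation'': $\tilde I$ is \emph{not} in general nilpotent as an ideal, and the inclusion $\tilde I^{p^n}\subset pR$ is false. The hypothesis $I\subset\Ker\Fr^n$ says every \emph{element} of $I$ has vanishing $p^n$-th power; it does not bound powers of the \emph{ideal}. (For instance, with $A=\BF_p[x_1,x_2,\dots]/(x_1^p,x_2^p,\dots)$ and $I=(x_1,x_2,\dots)$ one has $\Fr(I)=0$, $A/I=\BF_p$ semiperfect, yet $x_1x_2\cdots x_N\ne 0$ for every $N$, so $I$ is not a nilpotent ideal.) Fortunately, what your argument actually uses is only that $\tilde I$ is contained in the nilradical of $R$ (every element of $\tilde I$ is nilpotent), which is true: for $x\in\tilde I$ one has $x^{p^n}\in pR$, and since $R$ is $p$-nilpotent some higher power of $x$ is zero. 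That suffices to lift nilpotent components of Witt vectors from $R/\tilde I$ to $R$, which is all you need for surjectivity of $\hat W(R)\to\hat W(R/\tilde I)$. So the proof stands; you should just replace ``$\tilde I$ is genuinely nilpotent'' by ``$\tilde I$ is contained in the nilradical of $R$'' and drop the false inclusion $\tilde I^{p^n}\subset pR$.
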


\begin{proof}  
The problem is to check surjectivity of the map $W^{\perf}(R)\to Q^{\perf}(R)$. It suffices to show that the projective system
\[
 \ldots \overset{F}\longrightarrow \hat W(R)\overset{F}\longrightarrow \hat W(R)
\]
satisfies the Mittag-Leffler condition. Weak semiperfectness of $A:=R/pR$ means that $\im\Fr_A^n=\im\Fr_A^{n+1}$ for some $n$, so the projective system
\[
 \ldots \overset{F}\longrightarrow \hat W(A)\overset{F}\longrightarrow \hat W(A)
\]
satisfies the Mittag-Leffler condition. It remains to use Corollary~\ref{c:killed by a power of F}.
\end{proof}

\begin{cor}   \label{c:M-L}
If $R/pR$ is weakly semiperfect then the map 
$$(W\times_Q W^{\perf})(R)\to (W\times_Q Q^{\perf})(R)=\SW (R)$$ 
is surjective. So the sequence 
\begin{equation}    \label{e:Akhil's sequence3 (R)}
0\to \hat W^{\perf}(R)\to (W\times_Q W^{\perf})(R)\to\SW (R)\to 0. 
\end{equation}
induced by \eqref{e:Akhil's sequence3} is exact.   \qed
\end{cor}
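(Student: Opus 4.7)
The plan is to deduce the corollary as a direct unwinding of Lemma~\ref{l:M-L}. The fpqc exact sequence \eqref{e:Akhil's sequence3} already yields the left-exact portion of \eqref{e:Akhil's sequence3 (R)} upon taking sections over $R$: indeed, the kernel of $(W\times_Q W^{\perf})(R)\to \SW (R)$ consists of pairs $(w,v)$ with $w=0$ and $v\in W^{\perf}(R)$ having zero image in $Q^{\perf}(R)$, i.e., $v\in \hat W^{\perf}(R)$. So the only thing to establish is surjectivity of $(W\times_Q W^{\perf})(R)\to \SW(R)$.

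For this, I would unpack that by construction the map $W\times_Q W^{\perf}\to W\times_Q Q^{\perf}=\SW$ is obtained by composing the second factor with the canonical morphism $W^{\perf}\to Q^{\perf}$. Given an $R$-point $(w,q)\in \SW(R)$ with $w\in W(R)$ and $q\in Q^{\perf}(R)$ sharing the same image $\bar w\in Q(R)$ (where the image of $q$ means its $0$-th coordinate), I would invoke Lemma~\ref{l:M-L} to produce $v\in W^{\perf}(R)$ with image $q$ in $Q^{\perf}(R)$. The image of $v$ in $Q(R)$ (via $W^{\perf}\to W\to Q$) automatically equals the $0$-th coordinate of $q$, which is $\bar w$. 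Hence the pair $(w,v)$ lies in $(W\times_Q W^{\perf})(R)$ and maps to $(w,q)$, proving surjectivity.

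There is no real obstacle here; the content lies entirely in Lemma~\ref{l:M-L}, whose proof in turn rests on the Mittag-Leffler property of the projective system $\ldots \overset{F}\to \hat W(R)\overset{F}\to \hat W(R)$ under weak semiperfectness. Once that lemma is granted, the corollary is formal.
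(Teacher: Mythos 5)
Your proof is correct and matches the paper's intent: the paper marks this corollary \qed with no further text, regarding it as a formal consequence of Lemma~\ref{l:M-L}, and your write-up supplies precisely that formal unwinding — left-exactness of sections gives the kernel identification, and surjectivity of $W^{\perf}(R)\to Q^{\perf}(R)$ from the lemma lifts the $Q^{\perf}$-component of a point of $\SW(R)$, with the compatibility in $Q$ holding automatically by commutativity of $W^{\perf}\to W\to Q$ with $W^{\perf}\to Q^{\perf}\to Q$.
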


\subsubsection{Remarks}   \label{sss:why explicit}
(i) As explained at the end of \S\ref{sss:Some exact sequences}, $W\times_Q W^{\perf}$ canonically identifies with the semidirect product $W^{\perf}\ltimes\hat W$.
By Corollary~\ref{c:R flat}, $W^{\perf}(R)\simeq W(R^\flat )$. So the description of $\SW (R)$ by the exact sequence \eqref{e:Akhil's sequence3 (R)} is quite explicit.

(ii) Recall that the map $W\times_Q W^{\perf}\to\SW$ commutes with $F$ and $\hV$ if the operators $F$ and $\hV$ on $W\times_Q W^{\perf}$ are defined as in \S\ref{sss:hV on another fiber product}.
So the description of $F,\hV:\SW\to\SW$ in terms of  \eqref{e:Akhil's sequence3 (R)} is also quite explicit.

\subsubsection{}
In  \S\ref{ss:admissible rings}-\ref{ss:semiperfect} we apply Corollary~\ref{c:M-L} to two classes of $p$-nilpotent rings.

\subsection{$\SW (R)$ if $R$ is admissible in the sense of \cite{Lau14}}  \label{ss:admissible rings}
\subsubsection{Admissible rings}    \label{sss:admissible rings}
Let $R\in\pNilp$ be admissible in the sense of \cite{Lau14}; by definition, this means that $R_{\red}$ is perfect and the nilpotent radical of $R/pR$ is killed by a power of Frobenius.
Let $I$ be the nilradical of $R$.  Applying Corollary~\ref{c:killed by a power of F} to~$I$, we get an isomorphism $W^{\perf}(R)\iso W^{\perf}(R_{\red})=W((R_{\red})^{\perf})=W(R_{\red})$. Let $s:W(R_{\red})\to W(R)$ be the composition
$W(R_{\red})\iso W^{\perf}(R)\to W(R)$; then $s$ is a splitting\footnote{In fact, $s$ is the unique splitting, see \cite[\S 1B]{Lau14}.} for the epimorphism $W(R)\epi W(R_{\red})$. So one has $W(R)=s(W(R_{\red}))\oplus W(I)$.

We have $\hat W(R)=\hat W(I)\subset W(I)$. Following \cite{Lau14}, set 
$\BW(R):=s(W(R_{\red}))\oplus W(I)$; this is a subring of $W(R)$.

\begin{prop}    \label{p:admissible rings}
If $R\in\pNilp$ is admissible then the canonical map $\SW (R)\to W(R)$ is injective, and its image equals $\BW(R)$.
\end{prop}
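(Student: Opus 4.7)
The strategy is to combine Corollary~\ref{c:M-L} with a vanishing statement for $\hat W^{\perf}(R)$, and then unwind the resulting identification using the semidirect product description of $W\times_Q W^{\perf}$ from the end of \S\ref{sss:Some exact sequences}.

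First, I verify that the hypothesis of Corollary~\ref{c:M-L} is satisfied. For admissible $R$, the nilradical of $A := R/pR$ equals $I/pR$ and is killed by a power of Frobenius by assumption, while the quotient $A/\mathrm{nil}(A) = R_{\red}$ is perfect, hence semiperfect. So Lemma~\ref{l:weakly semiperfect}(iii) (applied with ideal $J=\mathrm{nil}(A)$) gives that $A$ is weakly semiperfect, and Corollary~\ref{c:M-L} yields an exact sequence
\[
0\to \hat W^{\perf}(R)\to (W\times_Q W^{\perf})(R)\to \SW(R)\to 0.
\]

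Next, I show $\hat W^{\perf}(R)=0$. By Corollary~\ref{c:killed by a power of F} applied successively to the ideals $pR\subset R$ and $\mathrm{nil}(R/pR)\subset R/pR$ (each killed by a power of Frobenius on the relevant quotient), the pro-object $\{\hat W(R)\}_F$ is unchanged after passing to $R_{\red}$, so $\hat W^{\perf}(R)\iso \hat W^{\perf}(R_{\red})$. Since $R_{\red}$ is reduced, every element of $\hat W(R_{\red})$ has nilpotent, hence zero, components, so $\hat W(R_{\red})=0$ and therefore $\hat W^{\perf}(R_{\red})=0$.

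Combining the two, the exact sequence collapses to an isomorphism $(W\times_Q W^{\perf})(R)\iso \SW(R)$. Via the identification $W^{\perf}\ltimes\hat W\iso W\times_Q W^{\perf}$, $(y,b)\mapsto (\pi(y)+b,y)$, from the end of \S\ref{sss:Some exact sequences}, the composite $\SW(R)\to W(R)$ of \eqref{e:Akhil's sequence2} becomes the map $(y,b)\mapsto \pi(y)+b$ with $y\in W^{\perf}(R)$, $b\in\hat W(R)$. For admissible $R$, \S\ref{sss:admissible rings} identifies $\pi\colon W^{\perf}(R)\to W(R)$ with the section $s\colon W(R_{\red})\to W(R)$, and $\hat W(R)=\hat W(I)$. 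Thus the map becomes $(y,b)\mapsto s(y)+b$; injectivity is immediate from the direct sum decomposition $W(R)=s(W(R_{\red}))\oplus W(I)\supset s(W(R_{\red}))\oplus \hat W(I)$, and the image is exactly $\BW(R)=s(W(R_{\red}))\oplus \hat W(I)$. The only nontrivial step is the vanishing of $\hat W^{\perf}(R)$; the rest is bookkeeping of canonical maps.
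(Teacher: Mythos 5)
Your proof is correct and follows the same route as the paper: invoke Corollary~\ref{c:M-L} after checking weak semiperfectness, then show $\hat W^{\perf}(R)=0$ via Corollary~\ref{c:killed by a power of F}, and identify $(W\times_Q W^{\perf})(R)$ with $\BW(R)$. The paper's proof is three sentences and leaves the final bookkeeping implicit; you spell out the semidirect-product unwinding and the weakly-semiperfect verification, but the mathematical content is identical.
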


\begin{proof}
Admissibility implies that $R/pR$ is weakly semiperfect, so we can apply Corollary~\ref{c:M-L}. Since $W^{\perf}(R)=W(R_{\red})$, we have $(W\times_Q W^{\perf})(R)=\BW (R)$.
By Corollary~\ref{c:killed by a power of F}, $\hat W^{\perf}(R)=\hat W^{\perf}(R_{\red})=0$.
\end{proof}

\subsection{$\SW (R)$ if $R$ is a semiperfect $\BF_p$-algebra}  \label{ss:semiperfect}

\subsubsection{Notation}
Let $R$ be a semiperfect $\BF_p$-algebra. Then $W(R)=W(R^{\perf})/W(J)$, where $J:=\Ker (R^{\perf}\epi R)$.
The ideal $W(J)\subset W(R^{\perf})$ is the projective limit of the diagram
\[
\ldots \overset{F}\longrightarrow W^{(F^2)}(R)\overset{F}\longrightarrow W^{(F)}(R).
\]
Define $\hat W_{\ttop}(J)\subset W(J)$ to be the projective limit of the diagram
\begin{equation}   \label{e:the diagram with p-complete terms}
\ldots \overset{F}\longrightarrow \hat W^{(F^2)}(R)\overset{F}\longrightarrow \hat W^{(F)}(R).
\end{equation}
Then $\hat W_{\ttop}(J)$ is an ideal in $W(R^{\perf})$.

One can also describe $\hat W_{\ttop}(J)$ as the set of those Witt vectors over $J$ whose components converge to $0$ with respect to the natural topology of $R^{\perf}$ (i.e., the projective limit topology).

\subsubsection{The formula for $\SW (R)$}
The exact sequence~\eqref{e:Akhil's sequence3 (R)} implies that
\begin{equation}   \label{e:SW(semiperfect)}
\SW (R)=W(R^{\perf})/\hat W_{\ttop}(J).
\end{equation}
One can also deduce \eqref{e:SW(semiperfect)} directly from the formulas  
$$\SW:=W\times_Q Q^{\perf} \quad \mbox{and} \quad Q(R)=W (R)/\hat W (R).$$

In \cite[\S 3.5]{BMVZ} %%
one can find more material about $\SW (R)$, where $R$ is a semiperfect $\BF_p$-algebra; e.g., there is a very explicit description if $R=\BF_p[x^{\frac{1}{p^\infty}}]/(x)$.

\subsubsection{Derived $p$-completeness of $\SW (R)$}
By Lemma~\ref{l:derived completeness of SW} and Remark~\ref{r:naive derived p-completeness is good}, $\SW (R)$ is derived $p$-complete for \emph{all} $R\in\pNilp$.
If $R$ is a semiperfect $\BF_p$-algebra then derived $p$-completeness of $\SW (R)$ can also be proved as follows. $W(R^{\perf})$ is clearly $p$-complete (in the sense of \S\ref{sss:hat W of p-complete}).
Each term of \eqref{e:the diagram with p-complete terms} is killed by a power of $p$ and therefore $p$-complete. So $\hat W_{\ttop}(J)$ is $p$-complete. Since $W(R^{\perf})$ and $\hat W_{\ttop}(J)$ are $p$-complete, the r.h.s. of \eqref{e:SW(semiperfect)} is derived $p$-complete.

\subsubsection{Remark (E.~Lau)}
If $R$ is semiperfect but not perfect then $\SW (R)$ is \emph{not $p$-complete}. Indeed, \eqref{e:SW(semiperfect)} implies that $\SW (R)/(p^n)=W_n(R)$, so the projective limit of the rings $\SW (R)/(p^n)$ equals $W(R)=W(R)^{\perf}/W(J)$. But $\hat W_{\ttop}(J)\ne W(J)$ unless $J=0$, i.e., unless $R$ is perfect.

\section{The notion of derived $p$-completeness}  \label{s:derived completeness}
In \S\ref{ss:derived completeness for Z-modules} we recall the notion of derived $p$-completeness for $\BZ$-modules.
In \S\ref{ss:derived completeness for sheaves} we recall the notion of derived $p$-completeness for \emph{sheaves} of $\BZ$-modules.

\subsection{Derived $p$-completeness for $\BZ$-modules} \label{ss:derived completeness for Z-modules}
\subsubsection{}   \label{sss:derived completeness for Z-modules}
Given a ring $R$ and an ideal $I\subset R$, there is a notion of derived completeness (with respect to $I$) for $R$-modules, see \cite[Tag 091S]{Sta}.
If $R=\BZ$ and $I=p\BZ$ one gets the notion of \emph{derived $p$-complete $\BZ$-module.} The class of derived $p$-complete $\BZ$-modules is larger and ``better'' than
the class of $p$-complete ones in the sense of \S\ref{sss:hat W of p-complete}. E.g., if $f:A\to A'$ is a homomorphism of $\BZ$-modules and $A,A'$ are derived $p$-complete then so are $\Ker f$ and $\Coker f$ (see \cite[Tag 091U]{Sta}); on the other hand, if $A,A'$ are $p$-complete then $\Coker f$ is $p$-complete if and only if $f(A)$ is closed for the $p$-adic topology of $A'$.
For more details, see \cite{Sta} (starting with \cite[Tag 091N]{Sta}) and references therein.

Recall that derived $p$-completeness of a $\BZ$-module $A$ is equivalent to each of the following conditions:

(i) the map $A\to\limfromn (A\overset{L}\otimes (\BZ/p^n\BZ))$ is an isomorphism;

(ii) the derived projective limit of the diagram 
\[
\ldots \overset{p}\longrightarrow A\overset{p}\longrightarrow A.
\]
is zero.

The above derived projective limit equals $R\Hom (\BZ [p^{-1}],A)$. So (ii) can be rewritten as
\begin{equation}  \label{e:RHom=0}
R\Hom (\BZ [p^{-1}],A)=0.
\end{equation}

Once we choose an exact sequence
\[
0\to\BZ^{(\BN)}\overset{f}\longrightarrow\BZ^{(\BN)}\to\BZ [p^{-1}]\to 0, 
\]
where $\BZ^{(\BN)}:=\BZ\oplus\BZ\oplus\ldots$, we can rewrite condition \eqref{e:RHom=0} 
as follows:

\smallskip

(iii) the map $A^{\BN}\overset{f^*}\longrightarrow A^{\BN}$ induced by $f$ is an isomorphism (here $A^{\BN}:=A\times A\times\ldots$).

\subsubsection{Remarks}  \label{sss:2derived completeness for Z-modules}
(i) A $\BZ$-module killed by a power of $p$ is derived $p$-complete: indeed, derived $p$-completeness is equivalent to \eqref{e:RHom=0}.

(ii) A projective limit of derived $p$-complete $\BZ$-modules is derived $p$-complete: this is clear from \S\ref{sss:derived completeness for Z-modules}(iii).

(iii) $A$ is $p$-complete in the sense of \S\ref{sss:hat W of p-complete} if and only if the map from $A$ to the projective limit of $A/p^nA$ is an isomorphism. So  $p$-completeness implies derived $p$-completeness by the above Remarks (i)-(ii).

\subsection{Derived $p$-completeness for sheaves of $\BZ$-modules}  \label{ss:derived completeness for sheaves}
Now let $A$ be a \emph{sheaf} of $\BZ$-modules on a site $\cC$. 

\subsubsection{}  \label{sss:derived completeness for sheaves}
According to \cite[Tag 0995]{Sta}, $A$ is said to be derived $p$-complete if it satisfies the condition from \S\ref{sss:derived completeness for Z-modules}(i), which is equivalent to the one from \S\ref{sss:derived completeness for Z-modules}(ii). The latter can be rewritten as
\begin{equation}  \label{e:RHHom=0}
R\HHom (\underline{\BZ} [p^{-1}],A)=0,
\end{equation}
where $\underline{\BZ} [p^{-1}]$ is the constant sheaf with fiber $\BZ [p^{-1}]$ and $\HHom$ denotes the sheaf of homomorphisms.

It is clear that if two terms of an exact sequence of sheaves of $\BZ$-modules are derived $p$-complete then so is the third one.

\begin{lem}   \label{l:repleteness}
Assume that the site $\cC$ satisfies the following condition: a countable product of exact sequences of sheaves of $\BZ$-modules on $\cC$ is exact.
Then

(a) derived $p$-completeness of $A$ is equivalent to the condition from \S\ref{sss:derived completeness for Z-modules}(iii);

(b) $A$ is derived $p$-complete if and only if the $\BZ$-module $A(c)=H^0(c,A)$ is derived $p$-complete for each $c\in\cC$.
\end{lem}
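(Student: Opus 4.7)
The plan is to apply the two-term sheafified resolution
\[
0 \to \underline{\BZ}^{(\BN)} \overset{f}\to \underline{\BZ}^{(\BN)} \to \underline{\BZ}[p^{-1}] \to 0
\]
of \S\ref{sss:derived completeness for Z-modules} (where $\underline{\BZ}^{(\BN)} = \bigoplus_\BN \underline{\BZ}$ in the category of sheaves of $\BZ$-modules) and reduce both parts of the lemma to a single computation. Applying $R\HHom(-, A)$ yields a distinguished triangle
\[
R\HHom(\underline{\BZ}[p^{-1}], A) \to R\HHom(\underline{\BZ}^{(\BN)}, A) \overset{f^*}{\longrightarrow} R\HHom(\underline{\BZ}^{(\BN)}, A),
\]
so derived $p$-completeness of $A$ (i.e., vanishing of the leftmost term) is equivalent to $f^*$ being a quasi-isomorphism.

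Next I would compute $R\HHom(\underline{\BZ}^{(\BN)}, A)$. Since $\HHom(\underline{\BZ}, -)$ is the identity functor on sheaves of $\BZ$-modules (hence exact), one has $R\HHom(\underline{\BZ}, A) = A$, and therefore $R\HHom(\bigoplus_\BN \underline{\BZ}, A)$ is computed by forming $\prod_\BN I^\bcdot$ for an injective resolution $A \to I^\bcdot$. Here the repleteness hypothesis becomes essential: exactness of countable products of sheaves implies that $\prod_\BN$ preserves quasi-isomorphisms, so this complex is quasi-isomorphic to $\prod_\BN A = A^\BN$, concentrated in degree $0$. Chasing the triangle, derived $p$-completeness of $A$ is equivalent to $f^*: A^\BN \to A^\BN$ being an isomorphism of sheaves, which is the sheaf-level form of the condition from \S\ref{sss:derived completeness for Z-modules}(iii); this proves (a).

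For (b), recall that a morphism of sheaves of abelian groups is an isomorphism if and only if it is an isomorphism on sections over every $c \in \cC$ (a section-wise inverse automatically assembles into a morphism of sheaves, since it is compatible with the restriction maps). Products of sheaves are computed section-wise, so $(A^\BN)(c) = A(c)^\BN$, and the map induced on sections by $f^*$ is exactly the map $f^*: A(c)^\BN \to A(c)^\BN$ appearing in the characterization (iii) of derived $p$-completeness for the $\BZ$-module $A(c)$. Combining with (a), $A$ is derived $p$-complete iff $A(c)$ is derived $p$-complete for every $c$.

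The main obstacle is the identification $R\HHom(\underline{\BZ}^{(\BN)}, A) = A^\BN$ concentrated in cohomological degree $0$; this is precisely the place where the repleteness hypothesis is used, controlling the potential contribution of $R^i\!\prod_\BN$ for $i > 0$. Once this is in hand, the remainder of the argument is formal bookkeeping with the triangle and with sections.
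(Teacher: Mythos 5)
Your proof is correct and follows essentially the same route as the paper: both arguments reduce to the observation that, under the countable-product exactness hypothesis, $R\HHom(\underline{\BZ}^{(\BN)},A)$ is concentrated in degree $0$ and equals $A^\BN$ (the paper phrases this as $\Ext^i(\underline{\BZ}^{(\BN)},A)=0$ for $i>0$), and then deduce (b) from the fact that products of sheaves are computed section-wise. Your write-up is just a more detailed unpacking of the same two-sentence argument given in the paper.
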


\begin{proof}
By assumption, $\Ext^i(\underline{\BZ}^{(\BN)},A)=0$ for $i>0$. So \eqref{e:RHHom=0} is equivalent to the condition from \S\ref{sss:derived completeness for Z-modules}(iii). This proves statement (a). It implies (b)
because a Cartesian product of sheaves is just their product in the sense of presheaves.
\end{proof}

\subsubsection{Remark}  \label{sss:repleteness}
By \cite[Prop.~3.1.9]{BS}), the condition from Lemma~\ref{l:repleteness} holds if $\cC$ is replete in the sense of \cite[Def.~3.1.1]{BS}.

\begin{lem}   \label{l:Picard stacks}
Let $A$ be a sheaf of $\BZ$-modules. Let $S_n$ denote $\Cone (A\overset{p^n}\longrightarrow A)$ viewed as a Picard stack. If $A$ is derived $p$-complete then the map $A\to\limfromn S_n$ is an isomorphism.
\end{lem}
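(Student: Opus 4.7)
The plan is to translate everything to the derived category of sheaves of abelian groups on $\cC$, using the equivalence between the 2-category of strict Picard stacks and the 2-category of chain complexes of sheaves of abelian groups concentrated in cohomological degrees $[-1,0]$. Under this equivalence, $S_n = \Cone(A\overset{p^n}\longrightarrow A)$ corresponds to the complex $[A\overset{p^n}\to A]$ in degrees $[-1,0]$, i.e., to $A\otimes^L_\BZ\BZ/p^n\BZ$; the sheaf $A$ (regarded as a discrete Picard stack) corresponds to $A$ in degree $0$; and the natural map $A\to S_n$ corresponds to the tautological map $A\to A\otimes^L_\BZ\BZ/p^n\BZ$. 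The transition $S_{n+1}\to S_n$ is the chain map $(p,\text{id})\colon [A\overset{p^{n+1}}\to A]\to [A\overset{p^n}\to A]$, acting as multiplication by $p$ on $\pi_1(S_n)=A[p^n]$ and as the canonical quotient on $\pi_0(S_n)=A/p^n A$.

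Next, I would invoke derived $p$-completeness of $A$ in the form of condition (i) of \S\ref{sss:derived completeness for Z-modules}/\S\ref{sss:derived completeness for sheaves}: the canonical map $A\to R\lim_n(A\otimes^L_\BZ\BZ/p^n\BZ)$ is an isomorphism in the derived category. Applying the Milnor short exact sequences for the cohomology of $R\lim$ to this tower (which lives in degrees $[-1,0]$) and using that the $R\lim$ is concentrated in degree $0$, one deduces $\lim_n A[p^n]=0$ (with $\times p$-transitions), $R^1\lim_n A/p^nA=0$, and a short exact sequence $0\to R^1\lim_n A[p^n]\to A\to \lim_n A/p^n A\to 0$. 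Comparing with the analogous Milnor sequences for the 2-limit of Picard stacks, namely $\pi_1(\lim_n S_n)=\lim_n A[p^n]$ and $0\to R^1\lim_n A[p^n]\to \pi_0(\lim_n S_n)\to\lim_n A/p^n A\to 0$ with the same transition maps, one obtains $\pi_1(\lim_n S_n)=0$ together with a canonical isomorphism $\pi_0(\lim_n S_n)\iso A$ induced by the map of the lemma; hence that map is an equivalence of Picard stacks.

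The main obstacle is justifying both the Milnor-type short exact sequences for the 2-limit of a tower of Picard stacks and the compatibility of this 2-limit with $R\lim$ of the corresponding complexes of sheaves. This is a standard feature of the 2-category of strict Picard stacks, but it needs some care in the sheaf setting; one rigorous approach is to present each $S_n$ as an internal groupoid of sheaves of abelian groups (cf.~\S\ref{sss:Ring groupoids}(i)) and compute the 2-limit by hand, or alternatively to work directly in the $\infty$-category of connective chain complexes truncated to the appropriate range of degrees.
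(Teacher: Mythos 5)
Your argument is correct, but it follows a genuinely different route from the paper's. The paper interprets $S_n$ as the Picard stack of extensions of $p^{-n}\underline{\BZ}/\underline{\BZ}$ by $A$, so that $\limfromn S_n$ is the stack of extensions of $\underline{\BZ}[p^{-1}]/\underline{\BZ}=\colim_n p^{-n}\underline{\BZ}/\underline{\BZ}$ by $A$; applying $R\HHom(-,A)$ to the triangle $\underline{\BZ}\to\underline{\BZ}[p^{-1}]\to\underline{\BZ}[p^{-1}]/\underline{\BZ}$ and using the definition \eqref{e:RHHom=0} of derived $p$-completeness then gives $A\iso R\HHom(\underline{\BZ}[p^{-1}]/\underline{\BZ},A)[1]$, which is the claim. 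Your route goes through $A\otimes^L_\BZ\BZ/p^n\BZ$, $R\lim$, and Milnor sequences. The identification of the transition maps ($\times p$ on $\pi_1$, the quotient on $\pi_0$) and the subsequent bookkeeping are correct.

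Two remarks. First, the Milnor detour is superfluous: once you know $A\iso R\lim_n(A\otimes^L_\BZ\BZ/p^n\BZ)$ (your form (i) of derived $p$-completeness), the complex $R\lim$ is already concentrated in degree $0$, so identifying the Picard 2-limit with the appropriate truncation of $R\lim$ (the "main obstacle" you flag) immediately finishes the proof with no further cohomological computation; unwinding through $\lim A[p^n]$, $R^1\lim A[p^n]$, etc. is not needed. Second, and more substantively, the Milnor short exact sequences $0\to R^1\lim_n H^{i-1}\to H^i(R\lim)\to\lim_n H^i\to 0$ for a tower of complexes of sheaves rely on exactness of countable products of sheaves, i.e., the hypothesis of Lemma~\ref{l:repleteness}. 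That holds for the site $\pNilp^{\op}$ of interest (Remark~\ref{sss:repleteness}), but Lemma~\ref{l:Picard stacks} is stated for an arbitrary site $\cC$ and the paper's proof does not use this hypothesis. If you keep the Milnor sequences, you are implicitly proving a slightly weaker statement; dropping them in favour of the direct argument above restores the full generality.
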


\begin{proof}   
$S_n$ is the stack of extensions of $p^{-n}\underline{\BZ}/\underline{\BZ}$ by $A$. 
So $\limfromn S_n$ is the stack of extensions of $\underline{\BZ}[p^{-1}]/\underline{\BZ}$ by $A$. It remains to use \eqref{e:RHHom=0}.
\end{proof}

\bibliographystyle{alpha}

\begin{thebibliography}{BFM}

\bibitem[AM]{AM}
D.~Arinkin and J.~Mundinger, \emph{Cartier duality via Mittag-Leffler modules}, arXiv.2512.13856.



\bibitem[Ber]{Ber}
P. Berthelot, \emph{$\mathcal{D}$-modules arithm\'etiques. I. Op\'erateurs diff\'erentiels
de niveau fini,} Ann. Sci. \'Ecole Norm. Sup. \textbf{29} (1996), 185--272.


\bibitem[Bh]{Bh} 
B.~Bhatt, \emph{Prismatic F-gauges}. Available at \url{https://www.math.ias.edu/~bhatt/teaching.html}.


\bibitem[BMV]{BMVZ}
B.~Bhatt, A.~Mathew, and V.~Vologodsky, \emph{Sheared Witt vectors}, arXiv:2607.01178, version~1.

\bibitem[BKMVZ]{Sheared}
B.~Bhatt, A.~Kanaev, A.~Mathew, V.~Vologodsky, and M.~Zhang, \emph{Sheared prismatization}, work in progress.

\bibitem[BS]{BS} 
B.~Bhatt and P.~Scholze, \emph{The pro-\'etale topology for schemes}, Ast\'erisque {\bf 369} (2015), 99--201.


\bibitem[Bou]{Bou} 
A.~Bouthier, \emph{Support singulier et homologie des fibres de Springer affines}, arXiv:2202.12017, version 3.


\bibitem[D21]{Daniels}
P.~Daniels, \emph{A Tannakian framework for $G$-displays and Rapoport-Zink spaces}, Int. Math. Res. Not. 2021, no. 22, 16963--17024.


\bibitem[Dr21]{ring groupoid} V.~Drinfeld, \emph{On a notion of ring groupoid}, arXiv:2104.07090.

\bibitem[Dr24]{Prismatization} 
V.~Drinfeld, \emph{Prismatization},  Selecta Mathematica New Series {\bf 30} (2024), article no.~49, https://doi.org/10.1007/s00029-024-00937-3.



\bibitem[Dr23]{Dr}
V.~Drinfeld, \emph{On Shimurian generalizations of the stack ${\rm BT}_1\otimes{\bf F}_p$}, Journal of Mathematical Physics, Analysis, Geometry, {\bf 21} (2025), No. 3, 276--301.
Also available as arXiv:2304.11709.

\bibitem[Dr25a]{On the Lau} 
V.~Drinfeld, \emph{On the Lau group scheme}, arXiv:2307.06194.


\bibitem[Dr25b]{Bonn-2025} 
\emph{The ring space $\SW$ and Barsotti-Tate groups}, a talk whose recording and slides are available at \url{https://archive.mpim-bonn.mpg.de/id/eprint/5179}



\bibitem[GM]{GM}
Z.~Gardner and K.~Madapusi, \emph{An algebraicity conjecture of Drinfeld and the moduli of $p$-divisible groups}, arXiv:2201.06124, version 3. 


\bibitem[L14]{Lau14} 
E.~Lau, \emph{Relations between Dieudonn\'e displays and crystalline Dieudonn\'e theory},
Algebra \& Number Theory {\bf 8} (2014), no. 9, 2201--2262.


\bibitem[L21]{Lau21} 
E.~Lau, \emph{Higher frames and $G$-displays}, Algebra Number Theory {\bf 15} (2021), no. 9, 2315--2355. 

\bibitem[L25]{Lau25} E.~Lau, \emph{The Shimurian stack is a gerbe over truncated displays}, in preparation.


\bibitem[M1]{Akhil1} A.~Mathew, recording of the first talk on sheared prismatization, available at \url{https://www.youtube.com/watch?v=o_bg1Ym9qZM}


\bibitem[M2]{Akhil2} A.~Mathew, recording of the second talk on sheared prismatization, available at \url{https://www.youtube.com/watch?v=broATZdMQ_0}

\bibitem[M3]{Akhil3} A.~Mathew, Cartier duality and flatness for ind-schemes, available at \url{https://www.youtube.com/watch?v=vsFSDAoeKBs}


\bibitem[Sta]{Sta} 
\emph{The Stacks Project}, \url{http://stacks.math.columbia.edu}.


\bibitem[Sw]{Sw}
R.~G.~Swan, \emph{On seminormality}, J. Algebra {\bf 67} (1980), no. 1, 210--229.

\bibitem[Vo]{Vadik's talk} V.~Vologodsky, recording of a talk on sheared prismatization, available at \url{https://www.youtube.com/watch?v=STvzF5pS-PY}


\bibitem[Zi01]{Zink-short}
Th.~Zink, \emph{A Dieudonn\'e theory for p-divisible groups.} In: Class field theory -- its centenary and prospect (Tokyo, 1998), Adv. Stud. Pure Math. {\bf 30}, 139--160,
Mathematical Society of Japan, Tokyo, 2001.


\bibitem[Zi02]{Zink}
Th.~Zink, \emph{The display of a formal $p$-divisible group.} In: Cohomologies $p$-adiques et applications arithm\'etiques, I,  Ast\'{e}risque {\bf 278} (2002), 127--248.


\end{thebibliography}

\end{document}